\documentclass[10pt,a4paper]{article}

\usepackage[english]{babel}
\usepackage[utf8]{inputenc}
\usepackage{authblk}

\usepackage[T1]{fontenc}

\usepackage[absolute]{textpos}

\usepackage[vlined,ruled]{algorithm2e}
\usepackage[left=2.8cm,right=2.8cm,top=2.5cm,bottom=2.5cm]{geometry}
\SetKwInOut{Input}{input}
\SetKwInOut{Output}{output}
\SetKwComment{Comment}{}{}

\SetCommentSty{mycmtsty}

\usepackage[vlined,ruled]{algorithm2e}
\usepackage{graphicx}
\usepackage{subfigure}
\usepackage{amsmath, amsthm}
\usepackage{hyperref}
\usepackage{amsfonts}
\usepackage{float}
\usepackage{pstricks,pst-plot,pst-text,pst-tree,pst-eps,pst-fill,pst-node,pst-math}
\usepackage[justification = centering]{caption}
\usepackage{tikz}
\usepackage{setspace}
\usepackage{enumitem}
\usepackage{esint}

\setlength{\parskip}{1ex plus 0.5ex minus 0.2ex}

\usepackage{titlesec}
\titleformat{\chapter}[display]   
{\normalfont\huge\bfseries}{\chaptertitlename\ \thechapter}{20pt}{\Huge}   
\titlespacing*{\chapter}{0pt}{-30pt}{30pt}

\usepackage{fancyvrb}
\VerbatimFootnotes 

\usepackage{amsthm}
\usepackage{subfigure}
\usepackage{appendix}
\usepackage{bigints}

\usepackage{mathtools}
\mathtoolsset{showonlyrefs,showmanualtags}

\newtheorem{theorem}{Theorem}

\newtheorem{lemme}{Lemma}
\newtheorem{remark}{Remark}
\newtheorem{prop}{Proposition}
\newtheorem{corol}{Corollary}

\begin{document}

\setlength{\abovedisplayshortskip}{5.5pt}
\setlength{\belowdisplayshortskip}{5.5pt}
\setlength{\abovedisplayskip}{5.5pt}
\setlength{\belowdisplayskip}{5.5pt}

\title{\Huge Elliptic homogenization with almost translation-invariant coefficients}

\author{R\'emi Goudey}

\affil{  CERMICS, Ecole des Ponts and MATHERIALS project-team, INRIA, \\ 6 \& 8, avenue Blaise Pascal, 77455 Marne-La-Vall\'ee Cedex 2, FRANCE.\\ \textbf{remi.goudey@enpc.fr}}

\date{}

\maketitle


\begin{abstract}
We consider an homogenization problem for the second order elliptic equation  $-\operatorname{div}\left(a(./\varepsilon) \nabla u^{\varepsilon} \right)$ $=f$ when the coefficient $a$ is almost translation-invariant at infinity and models a geometry close to a periodic geometry. This geometry is characterized by a particular discrete gradient of the coefficient $a$ that belongs to a Lebesgue space $L^p(\mathbb{R}^d)$ for $p\in[1,+\infty[$. When $p<d$, we establish a discrete adaptation of the Gagliardo-Nirenberg-Sobolev inequality in order to show that the coefficient $a$ actually belongs to a certain class of periodic coefficients perturbed by a local defect. We next prove the existence of a corrector and we identify the homogenized limit of $u^{\varepsilon}$. When $p\geq d$, we exhibit admissible coefficients $a$ such that $u^{\varepsilon}$ possesses different subsequences that converge to different limits in $L^2$. 
\end{abstract}

\section{Introduction}

Our purpose is to address an homogenization problem for a second order elliptic equation in divergence form with highly oscillatory coefficients : 
\begin{equation}
\label{equationepsilon_new}
\left\{
\begin{array}{cc}
   -\operatorname{div}\left(a\left(./\varepsilon\right) \nabla u ^{\varepsilon}\right) = f   & \text{on } \Omega, \\
    u^{\varepsilon} = 0 & \text{on } \partial \Omega,
\end{array}
\right.
\end{equation}
where $\Omega$ is a bounded domain of $\mathbb{R}^d$ ($d \geq 1$), $f$ is a function in $L^2(\Omega)$ and $\varepsilon>0$ is a small scale parameter. The (matrix-valued) coefficient $a$ is assumed to model a perturbed periodic geometry and to satisfy an almost translation invariance at infinity. Such a property, which will be formalized in the sequel, ensures that the coefficient describes a non-periodic medium with a structure close to that of a periodic medium at infinity. The present work follows up on several previous works \cite{blanc2018precised, blanc2018correctors, blanc2015local, blanc2012possible} where the authors have studied the homogenization of problem \eqref{equationepsilon_new} for non-periodic geometries characterized by a known periodic background perturbed by certain local defects. This structure was generically modeled using a particular class of coefficients of the form $a= a_{per} + \Tilde{a}$ where $a_{per}$ is a periodic coefficient and $\Tilde{a}$ is a perturbation that in some formal sense vanishes at infinity since it belongs to a Lebesgue space $\left(L^p(\mathbb{R}^d)\right)^{d \times d}$ for $p \in ]1, \infty[$. In this paper, we adopt a somewhat more general approach for the study of problem \eqref{equationepsilon_new} in a context of a perturbed periodic geometry, without postulating the specific structure "$a=a_{per} + \Tilde{a}$" for the coefficient $a$. The only assumption that we make on the ambient background, which is the starting point of our study, is an assumption of almost $Q$-translation invariance at infinity (where $Q = ]0,1[^d$ denotes the $d$-dimensional unit cube) satisfied by $a$. Typically, such an assumption in dimension $d=1$ will be expressed as the integrability of the function $\delta a := a(.+1) - a$ at infinity. 

To start with, the coefficients $a$ considered are assumed to be elliptic, uniformly bounded and uniformly $\alpha$-H\"older continuous on $\mathbb{R}^d$ : 
\begin{align}
\label{hypothèses1}
& \exists \lambda> 0, \text{  $\forall x$, $\xi \in \mathbb{R}^d$}, \quad  \lambda |\xi|^2 \leq \langle a(x)\xi, \xi\rangle, \\
& \label{hypothèses2}
a \in \left(L^{\infty}(\mathbb{R}^d)\right)^{d \times d},\\
& \label{hypothèses22}
a \in \left(\mathcal{C}^{0,\alpha}(\mathbb{R}^d)\right)^{d \times d}, \qquad \text{ for $\alpha \in ]0,1[$},
\end{align}
where $\mathcal{C}^{0,\alpha}(\mathbb{R}^d)$ is the space of functions which are both uniformly bounded and uniformly $\alpha$-H\"older continuous on $\mathbb{R}^d$, defined by 
$$ \mathcal{C}^{0,\alpha}(\mathbb{R}^d) = \left\{f \in L^{1}_{loc}(\mathbb{R}^d) \ \middle| \ \|f\|_{\mathcal{C}^{0,\alpha}(\mathbb{R}^d)} < \infty \right\},$$ 
where $\displaystyle \|f\|_{\mathcal{C}^{0,\alpha}(\mathbb{R}^d)} = \sup_{x\in \mathbb{R}^d} |f(x)| + \sup_{x,y\in \mathbb{R}^d, \ x\neq y}\frac{|f(x) - f(y)|}{|x-y|^{\alpha}}$.
Assumptions \eqref{hypothèses1} and \eqref{hypothèses2} are standard for the study of the homogenization problem~\eqref{equationepsilon_new}. Assumption \eqref{hypothèses22} is an additional assumption which is required in our approach to apply some results of elliptic regularity and to use pointwise estimates satisfied by the Green functions associated with equations in divergence form (see for instance \cite{avellaneda1987compactness,avellaneda1991lp} in which these assumptions are already made in the case of periodic coefficients). Since assumptions \eqref{hypothèses1} and \eqref{hypothèses2} imply that $a(./\varepsilon)$ is uniformly elliptic and uniformly bounded in $L^{\infty}(\Omega)$ with respect to~$\varepsilon$, the general homogenization theory of second order elliptic equations in divergence form \eqref{equationepsilon_new} (see \cite[Chapter 6, Chapter 13]{tartar2009general}) shows the existence of an extraction $\varphi$ such that $u_{\varphi(\varepsilon)}$ converges, strongly in $L^2(\Omega)$ and weakly in $H^1(\Omega)$, to a function $u^*$ when $\varepsilon$ converges to 0. The limit function is a solution to an homogenized problem, which is also a second order elliptic equation in divergence form,  
\begin{equation}
\label{equationhomog_new}
\left\{
\begin{array}{cc}
   -\operatorname{div}(a^* \nabla u^{*}) = f   & \text{on } \Omega, \\
    u^{*}(x) = 0 & \text{on } \partial \Omega,
\end{array}
\right.
\end{equation}
for some matrix-valued coefficient $a^*$ to be determined. In the periodic case, that is  \eqref{equationepsilon_new} when $a = a_{per}$ is periodic, it is well-known (see \cite{bensoussan2011asymptotic, jikov2012homogenization}) that the whole sequence $u^{\varepsilon}$ converges to $u^*$ and $(a_{per})^*$ is a constant matrix. The convergence in the $H^1(\Omega)$ norm can be obtained upon introducing a corrector $w_{per,q}$ defined for all $q$ in $\mathbb{R}^d$ as the periodic solution (unique up to the addition of a constant) to :
\begin{equation}
\label{problemeperiodique}
    -\operatorname{div}(a_{per}(\nabla w_{per,q} + q)) = 0 \quad \text{on } \mathbb{R}^d. 
\end{equation}
This corrector allows to both make explicit the homogenized coefficient  
\begin{equation}
\label{homogenizedcoeff_ap}   
    ((a_{per})^*)_{i,j} = \int_{Q} e_i^T a_{per}(y) \left( e_j + \nabla w_{per,e_j} \right) dy, 
\end{equation}
(where $(e_i)_{\{1,...,d\}}$ denotes the canonical basis of $\mathbb{R}^d$) and define an approximation 
\begin{equation}
\label{approximatesequence}
u^{\varepsilon,1} = u^*(.) + \varepsilon \displaystyle \sum_{i=1}^d \partial_{i}u^*(.) w_{per,e_i}(./ \varepsilon),
\end{equation}
such that $u^{\varepsilon,1} - u^{\varepsilon}$ strongly converges to $0$ in $H^1(\Omega)$ (see \cite{allaire1992homogenization} for more details). 

Our purpose here is to study the possibility to extend the above results to the setting of the non-periodic problem~\eqref{equationepsilon_new} when $a$ satisfies assumptions \eqref{hypothèses1}-\eqref{hypothèses2}-\eqref{hypothèses22} and is almost translation invariant at infinity. In our non-periodic case, the main difficulty is that, analogously to the periodic context, the behavior of $u^{\varepsilon}$ is closely linked to the existence of a corrector satisfying a property of strict sub-linearity at infinity, that is a solution, for $q\in \mathbb{R}^d$ fixed, to the corrector equation 
\begin{equation}
\label{correctorequationap}
    \left\{
\begin{array}{cc}
     -\operatorname{div}\left(a \left(\nabla w_q +q \right) \right)=0 & \text{on } \mathbb{R}^d,  \vspace{4pt} \\
    \displaystyle \lim_{|x| \to \infty} \dfrac{|w_q(x)|}{1+|x|} = 0. &
\end{array}
    \right.
\end{equation}
Here the corrector equation, formally obtained by a two-scale expansion (see again \cite{allaire1992homogenization} for the details), is defined on the whole space $\mathbb{R}^d$ and cannot be reduced to an equation posed on a bounded domain, as is the case in periodic context in particular. This prevents us from using classical techniques.

\subsection{Mathematical setting and preliminary approach}

\label{Subsect1_1}

In order to formalize our setting of non-periodic coefficients satisfying an almost translation invariance at infinity, we introduce, for every function $g\in L^1_{loc}(\mathbb{R}^d)$, the discrete gradient of $g$ denoted by $\delta g$. It is a vector-valued function defined by 
\begin{equation}
\label{def_gradient_discret}
    \delta g := \left(\delta_i g \right)_{i \in \{1,...,d\}} := \left( g(.+e_i) - g \right)_{i \in \{1,...d\}}.
\end{equation}
For every $p\in[1,+\infty[$, we also define the set $\mathbf{A}^p$ of locally integrable functions with a discrete gradient in $\left(L^p(\mathbb{R}^d) \right)^d$ : 
\begin{equation}
\label{circ_AP}
     \mathbf{A}^p = \left\{g \in L^1_{loc}(\mathbb{R}^d) \ \middle| \ \delta g \in \left(L^p(\mathbb{R}^d)\right)^d \right\}.
\end{equation}
Defined as above, the operator $\delta$ measures the deviation of a function $g$ from a $Q$-periodic function. This discrete gradient has been already used in the literature to study the behavior of solutions to elliptic equations posed in a periodic background, particularly to establish Liouville-type properties in \cite{moser1992liouville} and, more recently, to establish some regularity results satisfied by the solution to $-\operatorname{div}(a_{per} \nabla u ) = 0$ in \cite[Lemma 3.1]{armstrong2020large}. In our study, the class of coefficients $a$ we consider to model an asymptotically $Q$-periodic geometry is assumed to satisfy :
\begin{equation}
  \label{hypothèses3}
\exists p \in[1,+\infty[, \ \forall i,j \in \{1,...,d\},\quad \delta a_{i,j} \in \mathbf{A}^p.  
\end{equation}
Such an assumption ensures in a certain sense that $\delta a$ vanishes at infinity and, consequently, that the behavior of $a$ is close to that of a $Q$-periodic coefficient far from the origin (see Figure \ref{figf1ap} for examples in dimension $d=1$ and $d=2$). In addition, although we choose here to consider a specific case in which the coefficient is characterized by a "$\mathbb{Z}^d$-periodicity" at infinity, the results of the present paper can be easily adapted in a context of "$T$-periodicity" at infinity for any period $T$ (see Remark \ref{remarque_ap_period_T}). From a practical point of view, for a given medium modeled by a coefficient $a$, we are aware that the main difficulty is actually to identify the underlying period $T$ that characterizes the behavior of $a$ at infinity. A possible approach to overcome this difficulty consists in performing a spectrum analysis in order to identify the frequency of occurrence of the Dirac delta functions in the Fourier transform of $a$. 
We additionally note that, adapting the definition of the discrete gradient \eqref{def_gradient_discret}, similar questions to those addressed in the present article may be studied for random coefficients that are stationary at infinity in a sense that has to be made precise. We hope to return to this alternative setting in a future publication and we refer to \cite{goudey} for more details. 


\begin{figure}[h!]
\centering
\includegraphics[scale=0.26]{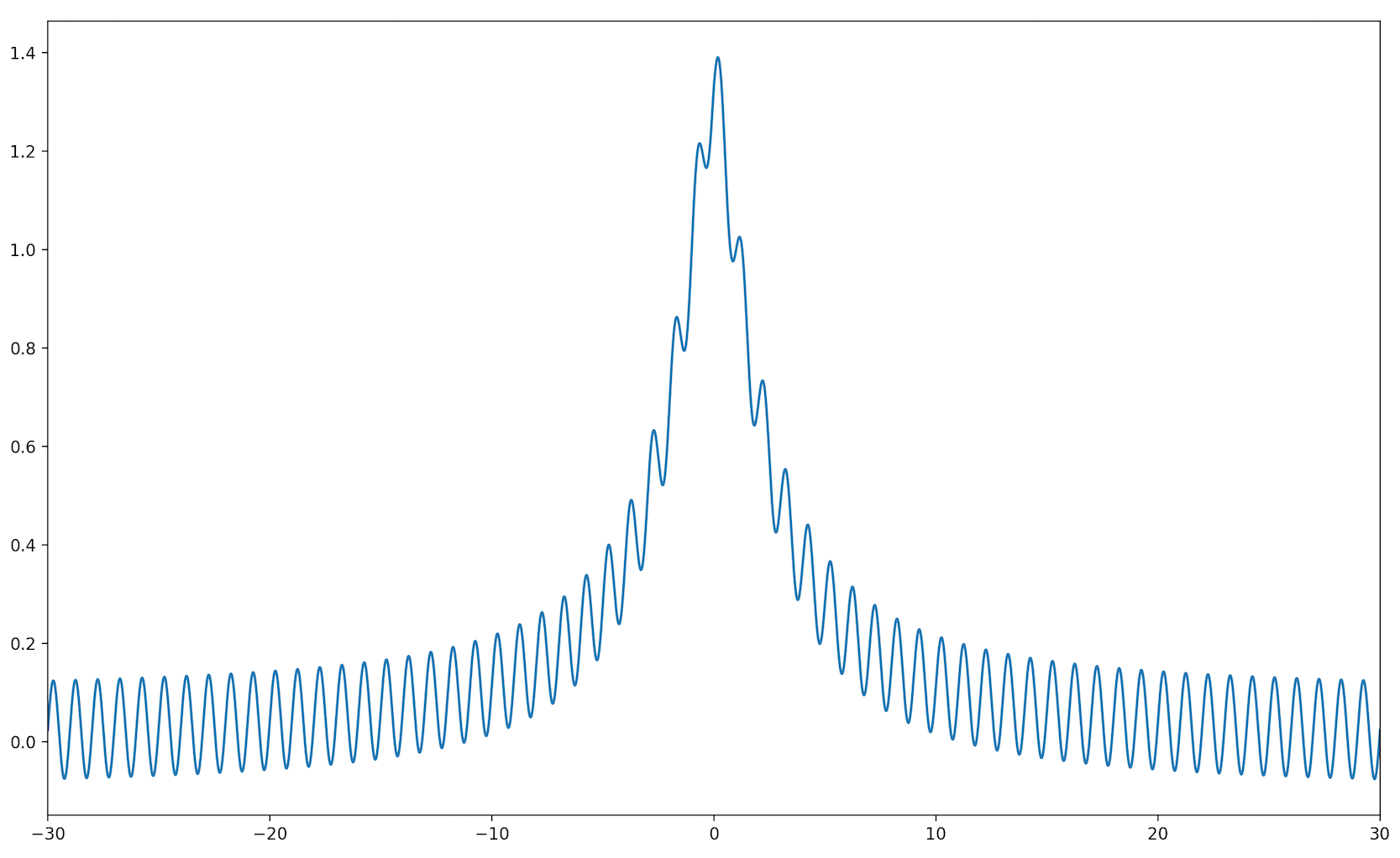}
\includegraphics[scale=0.24]{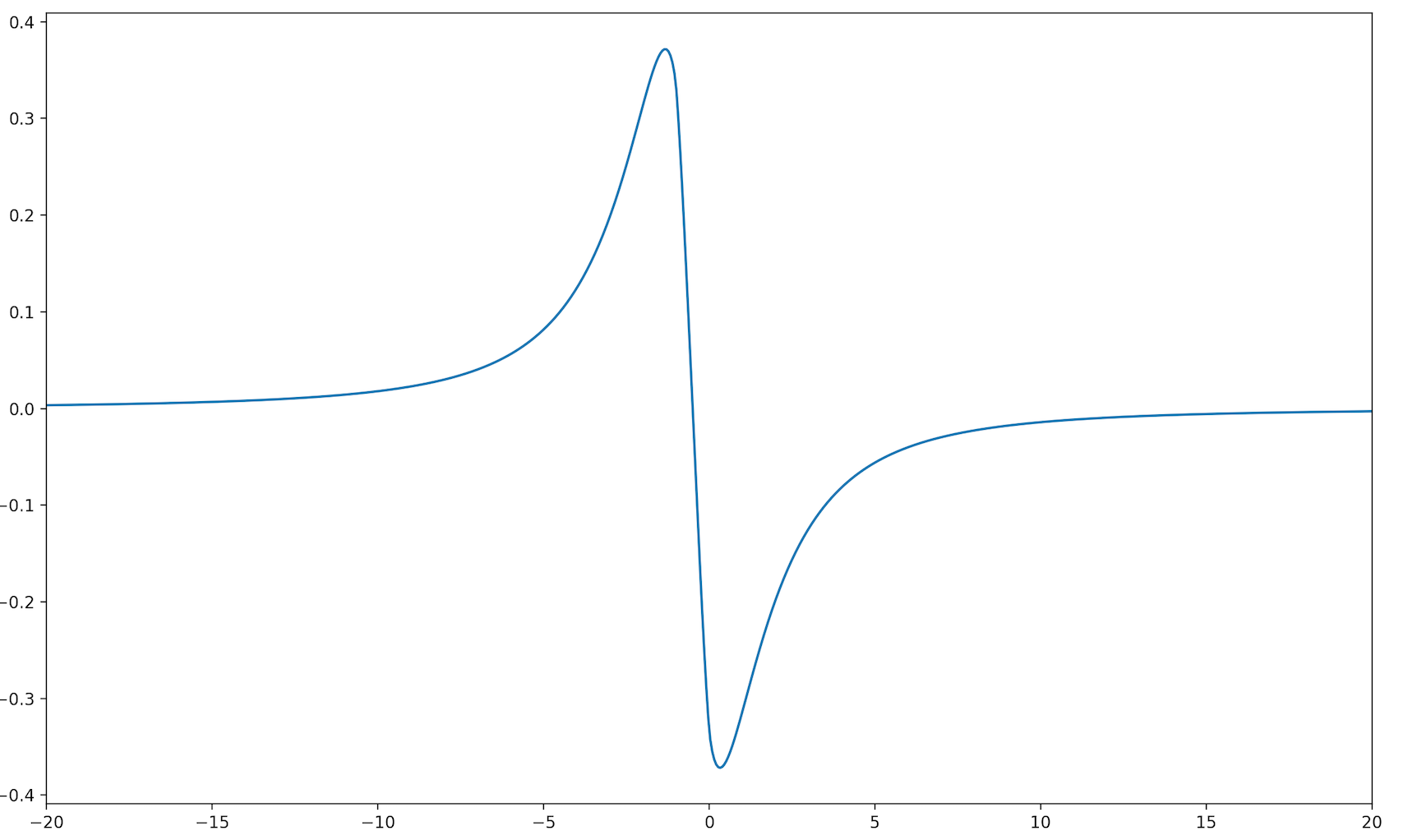}
\includegraphics[scale=0.265]{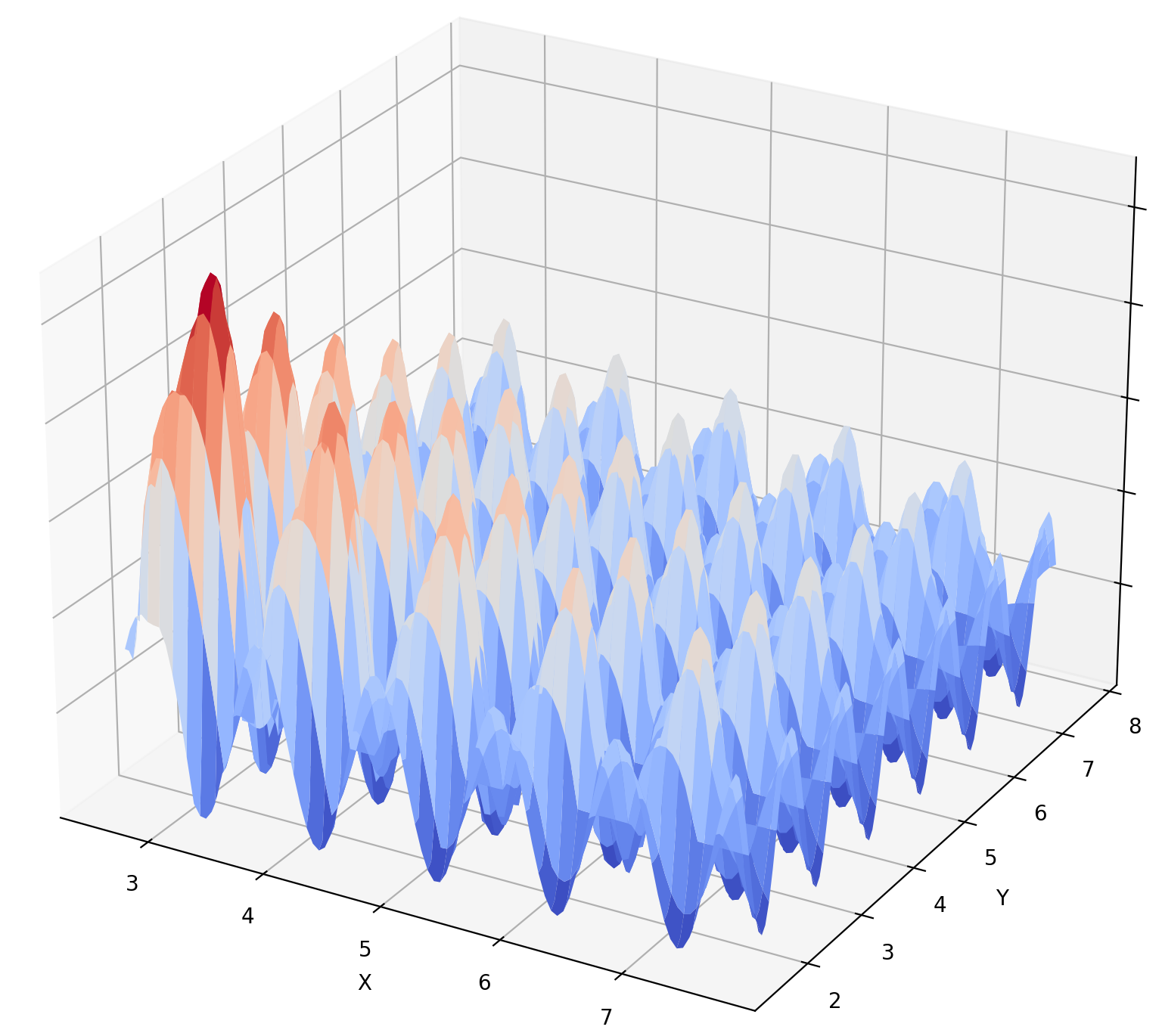}
\includegraphics[scale=0.265]{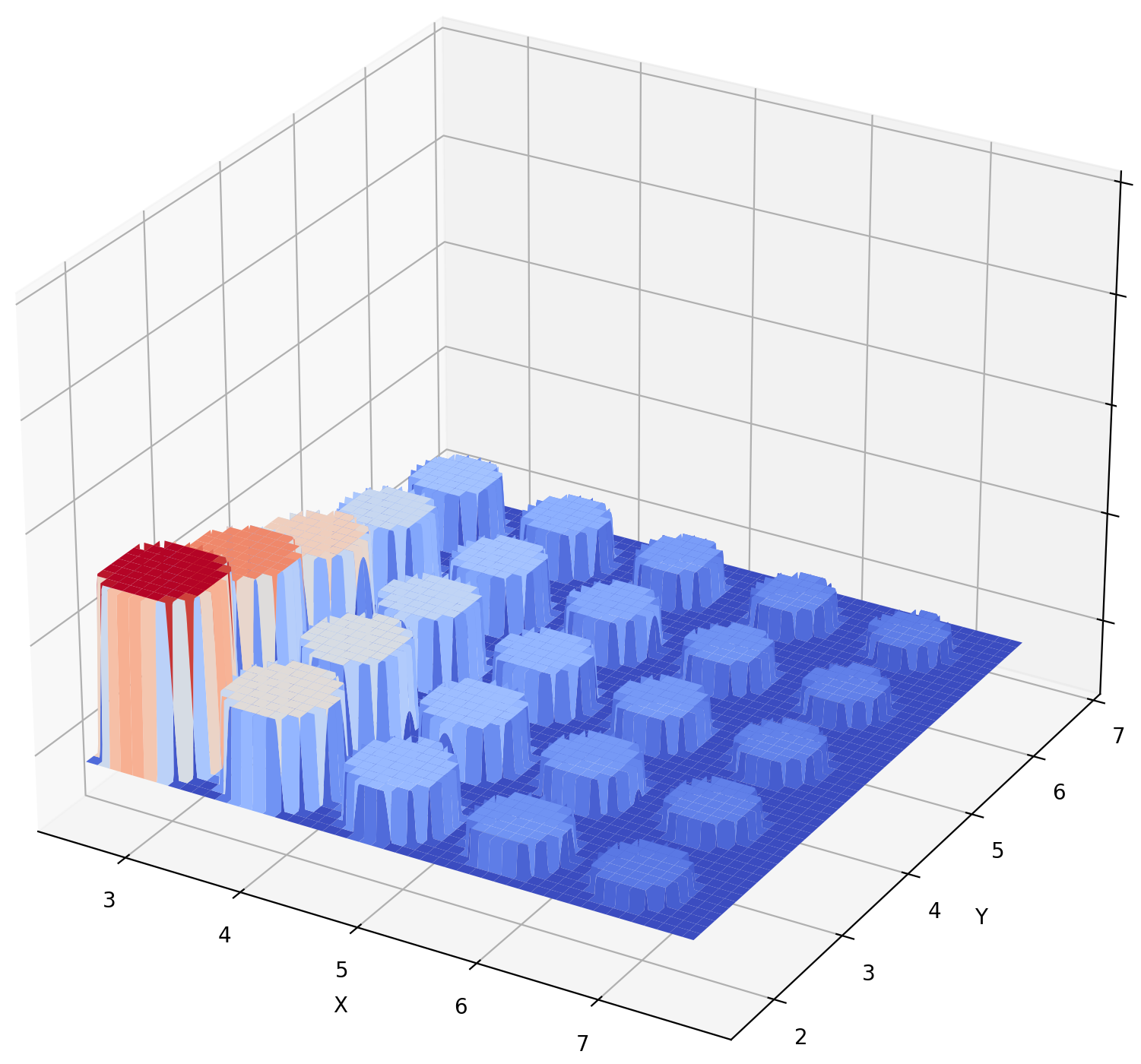}
\caption
{Examples of coefficients $a$ satisfying assumption \eqref{hypothèses3} in dimension $d=1$ (left: $a(x)$ ; right: $\delta a(x)$) and $d=2$ (left: $a(x,y)$ ; right: $|\delta_2 a(x,y)|$).}
\label{figf1ap}
\end{figure}



A preliminary approach to understand the behavior of $u^{\varepsilon}$ and to prove existence of an adapted corrector in our context is to consider a continuous version of \eqref{hypothèses3} for which
\begin{equation}
\label{hypothese_continue_ap}
   \exists p \in [1,+\infty[,  \forall i,j \in \{1,...,d\}, \quad  \nabla a_{i,j} \in \left(L^p(\mathbb{R}^d)\right)^d.
\end{equation}
Since 
$ \displaystyle \delta_{k} a_{i,j} = \int_{0}^{1} \nabla a_{i,j}(. + t e_k).e_k \ dt$  
for every $i,j,k$ in $\{1,...,d\}$, the H\"older inequality actually allows to show that assumption \eqref{hypothese_continue_ap} is stronger than \eqref{hypothèses3}. Such an assumption implies the convergence to $0$ of $\nabla a$ at infinity, that is to say, it models a medium close to an homogeneous medium at infinity. For this particular setting, we can distinguish two cases that depend on the value of the ratio $\dfrac{p}{d}$ : 

1. \underline{The case $p<d$} : If we denote by $p^* = \dfrac{pd}{d-p}$ the Sobolev exponent associated with $p$, a consequence of the Gagliardo-Nirenberg-Sobolev inequality (see for instance \cite[Section 5.6.1]{evans10}) gives the existence of a constant $c\in \mathbb{R}^{d\times d}$ such that $a-c \in \left(L^{p^*}(\mathbb{R}^d)\right)^{d\times d}$ and the following inequality holds~:  
    $$\|a-c\|_{L^{p^*}(\mathbb{R}^d)} \leq M \|\nabla a\|_{L^p(\mathbb{R}^d)},$$
    where $M$ is a constant independent of $a$. It is therefore possible to split the coefficient $a$ as the sum of a constant and a "local" perturbation. Precisely,
    \begin{equation}
    \label{Decompistion_homog_coeff_ap}
      a = c + a-c = c + \Tilde{a},  
    \end{equation}
    where $\Tilde{a}$ belongs to $\left(L^{p^*}(\mathbb{R}^d)\right)^{d\times d}$ and the setting is that of a periodic geometry (actually an homogeneous background described by the constant $c$) perturbed by a defect of $L^{p^*}(\mathbb{R}^d)$. Consequently, if $a$ satisfies \eqref{hypothèses1}, \eqref{hypothèses2} and \eqref{hypothèses22}, our problem is equivalent to a perturbed periodic problem introduced in \cite{blanc2018precised, blanc2018correctors}. In this case the existence of an adapted corrector is established, the gradient of which shares the same structure as the coefficient $a$ : it is a gradient of a periodic function perturbed by a function in $L^{p*}(\mathbb{R}^d)$. The homogenization problem can also be addressed : the whole sequence $u^{\varepsilon}$ converges to $u^*$ and the coefficient $a^*$ can be made explicit. 
    
2. \underline{The case $p\geq d$} : This case is characterized by a slow decay of $\nabla a$ at infinity. Contrary to the case $p<d$, we can show here the existence of coefficients $a$ satisfying \eqref{hypothese_continue_ap} and such that it is impossible to split $a$ as in \eqref{Decompistion_homog_coeff_ap}, that is to characterize our particular geometry as a periodic (let alone homogeneous) background perturbed by a local defect. A typical counter example, which will be detailed in Section~\ref{SectionAP_3}, is given by a coefficient $a$ which oscillates very slowly at infinity ($a = 2+ \sin(\ln(1+|x|)$ in dimension $d=1$ for example). Far from the origin, such a coefficient locally looks as constant but does not converge at infinity. In this particular case, we can show that the sequence $u^{\varepsilon}$ itself does not converge. We have only the convergence up to an extraction and the sequence admits an infinite number of converging subsequences. 

In our discrete case, when $\delta a$ belongs to  $\left(L^p(\mathbb{R}^d)\right)^d$, we therefore expect a similar phenomenon~: the convergence of $u^{\varepsilon}$ should depend on the value of the ratio $\frac{p}{d}$, that is, on the type of decay at infinity of the discrete gradient $\delta a$.

\subsection{Main results}

In the sequel, we denote by $B_R$ the ball of radius $R >0$ centered at the origin,  by $B_R(x)$ the ball of radius $R >0$ and center $x \in \mathbb{R}^d$ and by $Q+x :=  \displaystyle \prod_{i=1}^d ]x_i,x_i+1[$, the unit cell translated by a vector $x\in \mathbb{R}^d$. We also denote by $|A|$ the volume of any Borel subset $A\subset \mathbb{R}^d$. In addition, for a normed vector space $(X,\|.\|_X)$ and a matrix-valued function $f \in X^n$, $n\in \mathbb{N}$, we use the notation $\|f\|_{X} \equiv \|f \|_{X^n}$ when the context is clear. 

Assuming that the coefficient $a$ satisfies \eqref{hypothèses1}, \eqref{hypothèses2}, \eqref{hypothèses22} and \eqref{hypothèses3}, the main questions that we examine in this paper are the following : does the whole sequence $u^{\varepsilon}$ converges to $u^*$ (and not only a sub-sequence) ? If it is the case, can the diffusion coefficient $a^*$ of the homogenized equation be made explicit ? Can we establish the existence of a strictly sub-linear corrector solution to \eqref{correctorequationap}~? 

\subsubsection{The case $p<d$}

When $\delta a \in \left(L^p(\mathbb{R}^d)\right)^d$ for $p<d$, our approach is an adaptation of that of the continuous case which we have just sketched above in Section \ref{Subsect1_1} : we show that the coefficient $a$ actually models a periodic geometry perturbed by a local defect which, up to a local averaging, belongs to $L^{p^*}(\mathbb{R}^d)$, for $p^* = \frac{pd}{d-p}$ the Sobolev exponent associated with $p$. To this end, we introduce an operator $\mathcal{M}$ to describe the local averages of a function $f \in L^1_{loc}(\mathbb{R}^d)$ and defined by : 
$$\mathcal{M}(f)(z) = \displaystyle \int_{Q+z} f(x)dx.$$ 
We also introduce the following two functional spaces : 
\begin{equation}
\label{defEp}
\mathcal{E}^p = \left\{f \in L^1_{loc}(\mathbb{R}^d) \ \middle|  \ \mathcal{M}\left(\left|f\right| \right) \in L^{p^*}(\mathbb{R}^d) \right\},
\end{equation}
\begin{equation}
\label{defAp}
    \mathcal{A}^p = \left \{ f \in L^1_{loc}(\mathbb{R}^d) \ \middle| \ \mathcal{M}\left(\left|f\right| \right) \in L^{p^*}(\mathbb{R}^d) \ \text{and} \ \delta f \in (L^p(\mathbb{R}^d))^d\right\},
\end{equation}
equipped with the norms : 
\begin{equation}
\label{normeE_p}
    \left\|f\right\|_{\mathcal{E}^p} = \| \mathcal{M}\left(\left|f\right| \right) \|_{L^{p^*}(\mathbb{R}^d)},
\end{equation}
\begin{equation}
\label{normAp}
    \|f\|_{\mathcal{A}^p} = \|\mathcal{M}\left(\left|f\right| \right)\|_{L^{p^*}(\mathbb{R}^d)} + \|\delta f\|_{(L^{p}(\mathbb{R}^d))^d}.
\end{equation}
We particularly note that the functions in $\mathcal{E}^p$ or $\mathcal{A}^p$ are characterized by the integrability of the local averaging operator $\mathcal{M}$ applied to their absolute value. Our main result regarding the functions of $\mathbf{A}^p$ when $p<d$ is given in the following proposition : 
\begin{prop}
\label{decomposition_fonctions_A_p}
Assume $p<d$. Let $f \in  \mathbf{A}^p$, then there exists a unique periodic function $f_{per}$ such that $f - f_{per} \in \mathcal{E}^p$. In addition, there exists a constant $C>0$ independent of $f$ such that : \begin{equation}
\label{GNS_discret}
    \left\|f - f_{per}\right\|_{\mathcal{E}^p} \leq C \|\delta f \|_{L^p(\mathbb{R}^d)}.
\end{equation}
\end{prop}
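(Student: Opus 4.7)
The plan is to mimic the classical proof of the Gagliardo--Nirenberg--Sobolev inequality in our discrete setting, where the role of the gradient is played by the operator $\delta$ and where the free additive constant of the classical statement is upgraded to a $Q$-periodic function. The natural first step is to establish a purely discrete counterpart on the lattice: for every $u:\mathbb{Z}^d\to\mathbb{R}$ with $\delta u \in (\ell^p(\mathbb{Z}^d))^d$ and $p<d$, there should be a unique constant $c\in\mathbb{R}$ such that $u-c \in \ell^{p^*}(\mathbb{Z}^d)$, together with the estimate $\|u-c\|_{\ell^{p^*}} \leq C\,\|\delta u\|_{\ell^p}$. To prove this sub-lemma I would interpolate $u$ into a continuous function $U:\mathbb{R}^d\to\mathbb{R}$---either multi-linearly on each unit cube of $\mathbb{Z}^d$, or by convolution with a smooth bump whose integer translates form a partition of unity---in such a way that $\|\nabla U\|_{L^p(\mathbb{R}^d)} \leq C\,\|\delta u\|_{\ell^p}$. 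The classical continuous GNS applied to $U$ then yields a constant $c$ with $U-c \in L^{p^*}(\mathbb{R}^d)$, and a pointwise cube-by-cube comparison converts this back into the desired $\ell^{p^*}$ bound on $u-c$.

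With such a lemma at hand, I would apply it slice by slice. Given $f \in \mathbf{A}^p$, Fubini ensures that for almost every $x\in Q$ the sequence $g_x(k):=f(x+k)$ has discrete gradient in $(\ell^p(\mathbb{Z}^d))^d$; the sub-lemma then provides a unique constant $c(x)$ such that $g_x - c(x)\in\ell^{p^*}(\mathbb{Z}^d)$. One checks that $x\mapsto c(x)$ is measurable (for instance as a pointwise limit of suitable averages of $g_x$), and $f_{per}$ is defined as the $Q$-periodic extension of $c$.

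For the quantitative estimate, set $h(n):=\int_{Q+n}|f-f_{per}|$ for $n\in\mathbb{Z}^d$, which, by periodicity of $f_{per}$ and the change of variable $y=x+n$, equals $\int_Q |g_x(n)-c(x)|\,dx$. Minkowski's integral inequality in $\ell^{p^*}(\mathbb{Z}^d)$ yields
$$ \Bigl(\sum_{n\in\mathbb{Z}^d} h(n)^{p^*}\Bigr)^{1/p^*} \leq \int_Q \Bigl(\sum_{n\in\mathbb{Z}^d} |g_x(n)-c(x)|^{p^*}\Bigr)^{1/p^*}\,dx. $$
The integrand is controlled pointwise in $x$ by $C\,\|\delta f(x+\cdot)\|_{\ell^p}$ via the sub-lemma, and Jensen's inequality on the probability space $Q$ (concavity of $t\mapsto t^{1/p}$) then bounds the right-hand side by $C\,\|\delta f\|_{L^p(\mathbb{R}^d)}$. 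A simple geometric observation---$Q+z$ meets at most $2^d$ cells $Q+m$ with $m\in\mathbb{Z}^d$---converts this $\ell^{p^*}$ control of $h$ into the desired $L^{p^*}(\mathbb{R}^d)$ bound on $\mathcal{M}(|f-f_{per}|)$. Uniqueness is then immediate: the difference of two admissible periodic extensions is itself $Q$-periodic, so its $\mathcal{M}$-average is the constant equal to the cell-integral of its absolute value, and membership in $L^{p^*}(\mathbb{R}^d)$ forces this constant to vanish.

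The main obstacle is the sub-lemma on $\mathbb{Z}^d$: choosing an interpolation $U$ of $u$ for which both $\|\nabla U\|_{L^p} \leq C\,\|\delta u\|_{\ell^p}$ and the reverse comparison $\|u-c\|_{\ell^{p^*}} \leq C\,\|U-c\|_{L^{p^*}}$ hold cleanly and simultaneously. Once that black box is in place, the remainder of the argument is soft and uses only Fubini, Minkowski, Jensen, and an elementary covering count.
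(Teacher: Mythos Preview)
Your approach is correct and takes a genuinely different route from the paper's. The paper never invokes a lattice Sobolev inequality; instead it observes that $\partial_i\mathcal{M}(|f|)$ is a slice integral of $\delta_i|f|$, whence $\|\nabla\mathcal{M}(|f|)\|_{L^p}\le\|\delta f\|_{L^p}$, and applies the \emph{continuous} GNS directly to $\mathcal{M}(|f|)$ --- but this only yields the inequality for compactly supported (hence $L^p$) functions. To reach general $f\in\mathbf{A}^p$ the paper then runs a density argument: approximants $f_N=\chi_N(f-f_{per,N})$, with $f_{per,N}$ an annular average of translates, are shown to satisfy $\delta f_N\to\delta f$ in $L^p$ via a bespoke discrete Poincar\'e--Wirtinger inequality on annuli (Lemma~\ref{Poincare_Wirtinger_discret}), so $(f_N)$ is Cauchy in $\mathcal{E}^p$ and the periodic part emerges as $f-\lim f_N$. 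Your slicing $f\mapsto(g_x)_{x\in Q}$ plus Minkowski/Jensen is more direct and bypasses both the annular Poincar\'e lemma and the approximation step; the price is that you need the discrete GNS on $\mathbb{Z}^d$ as input, which --- as you correctly flag --- requires some bookkeeping in the interpolation step (for the reverse comparison $\|u-c\|_{\ell^{p^*}}\lesssim\|U-c\|_{L^{p^*}}$, comparing $u(n)=U(n)$ to the cube average of $U$ and absorbing the resulting oscillation term via the embedding $\ell^p\hookrightarrow\ell^{p^*}$ works cleanly). The paper's route has the side benefit that its intermediate result for $L^p$ functions (Proposition~\ref{Corollaire_GNS_LP}) is reused later in the article.
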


Proposition \ref{decomposition_fonctions_A_p} is a discrete adaptation of the Gagliardo-Nirenberg-Sobolev inequality and Section \ref{SectionAp_1} is devoted to its proof. It ensures that every function $f \in \mathbf{A}^p$ is the sum of a periodic function and a "perturbation" of $\mathcal{A}^p$. This result therefore allows to identify a periodic background perturbed by a local defect. Precisely, the coefficient $a$ is of the form $a=a_{per} + \Tilde{a}$, that is, it is the sum of a periodic coefficient $a_{per}$, that will be made explicit in this paper (see Proposition \ref{expliciteperiodiqueap}), and a perturbation denoted by $\Tilde{a}$. To address the homogenization problem in this particular perturbed case, we establish the following result : 
\begin{theorem}
\label{Correcteur_A_P}
Assume $a$ satisfies \eqref{hypothèses1}, \eqref{hypothèses2}, \eqref{hypothèses22} and \eqref{hypothèses3} for $1<p<d$. We denote by $a_{per}$ the unique periodic coefficient given by Proposition \ref{decomposition_fonctions_A_p} such that $\Tilde{a} := a-a_{per} \in \left(\mathcal{A}^p\right)^{d \times d}$. Let $q \in \mathbb{R}^d$. If $w_{per,q}$ is the periodic solution, unique up to an additive constant, to  
$$-\operatorname{div}\left(a_{per} \left( \nabla w_{per,q} + q \right)\right) = 0 \quad \text{on } \mathbb{R}^d.$$
Then, there exists $\Tilde{w}_q\in L^1_{loc}(\mathbb{R}^d)$ solution to 
\begin{equation}
\label{eq_correcteur_A_p}
\left\{
\begin{array}{cc}
     - \operatorname{div}(a (\nabla w_{per,q} + \nabla \Tilde{w}_q +q)) = 0  &   \text{on } \mathbb{R}^d, \vspace{4pt} \\
   \displaystyle \lim_{|x| \to \infty} \dfrac{|\Tilde{w}_q(x)|}{1+ |x|} = 0, & 
\end{array}
\right.
\end{equation}
such that $\nabla \Tilde{w}_q \in \left(\mathcal{A}^p\cap \mathcal{C}^{0, \alpha}(\mathbb{R}^d)\right)^d$. Such a solution $\Tilde{w}_q$ is unique up to an additive constant. 

In addition, the sequence $u^{\varepsilon}$ of solutions to \eqref{equationepsilon_new} converges, strongly in $L^2(\Omega)$ and weakly in $H^1(\Omega)$ to $u^*$ solution to : 
\begin{equation}
\label{equationhomog_new_per}
\left\{
\begin{array}{cc}
   -\operatorname{div}(a_{per}^* \nabla u^{*}) = f   & \text{on } \Omega, \\
    u^{*} = 0 & \text{on } \partial \Omega,
\end{array}
\right.
\end{equation}
where $a_{per}^*$ is defined by \eqref{homogenizedcoeff_ap}.
\end{theorem}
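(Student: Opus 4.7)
The plan is to reduce the problem to a perturbed periodic homogenization problem via Proposition \ref{decomposition_fonctions_A_p}, to adapt the corrector theory of \cite{blanc2018correctors}, and to close the argument by a standard two-scale expansion. First, applying Proposition \ref{decomposition_fonctions_A_p} entry-by-entry yields $a = a_{per} + \tilde{a}$ with $\tilde{a} \in (\mathcal{A}^p)^{d \times d}$. A preliminary check is that $a_{per}$ inherits \eqref{hypothèses1}, \eqref{hypothèses2} and \eqref{hypothèses22}: since $\mathcal{M}(|\tilde{a}|) \in L^{p^*}(\mathbb{R}^d)$ vanishes at infinity, averaging the ellipticity inequality on translates $Q + n e_k$ and letting $n \to \infty$ forces $\lambda |\xi|^2 \leq \langle a_{per} \xi, \xi\rangle$ on $Q$, and the $L^\infty$ and $\mathcal{C}^{0,\alpha}$ bounds are preserved by the same averaging. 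The classical periodic theory then provides a corrector $w_{per,q}$ with $\nabla w_{per,q} \in \mathcal{C}^{0,\alpha}$ periodic and bounded.

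Setting $F_q := \tilde{a}(\nabla w_{per,q}+q) \in (\mathcal{A}^p)^{d\times d}$, the equation for the defect corrector reduces to $-\operatorname{div}(a\nabla \tilde{w}_q) = \operatorname{div}(F_q)$ on $\mathbb{R}^d$. Following \cite{blanc2018correctors}, I would represent
\begin{equation*}
\nabla \tilde{w}_q(x) = \int_{\mathbb{R}^d} \nabla_x \nabla_y G_a(x,y)\, F_q(y)\, dy,
\end{equation*}
where $G_a$ is the Green function of $-\operatorname{div}(a\nabla \cdot)$. The Avellaneda-Lin pointwise bounds \cite{avellaneda1987compactness,avellaneda1991lp}, valid under \eqref{hypothèses1}-\eqref{hypothèses22}, yield continuity of this singular-integral operator on $L^{p^*}(\mathbb{R}^d)$-norms of local averages, hence $\mathcal{M}(|\nabla \tilde{w}_q|) \in L^{p^*}(\mathbb{R}^d)$. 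Control of the discrete gradient proceeds by shifting the equation by $e_k$, subtracting, and using the periodicity of $\nabla w_{per,q}$: the function $\delta_k \tilde{w}_q$ satisfies
\begin{equation*}
-\operatorname{div}(a\nabla \delta_k \tilde{w}_q) = \operatorname{div}\bigl((\delta_k a)(\nabla w_{per,q}+\nabla \tilde{w}_q+q)(\cdot+e_k)\bigr),
\end{equation*}
the right-hand side being the divergence of an $L^p$ vector field by \eqref{hypothèses3}, so a further Green-function estimate yields $\delta \nabla \tilde{w}_q \in (L^p(\mathbb{R}^d))^d$. Interior Schauder estimates, together with \eqref{hypothèses22} and the local boundedness of $F_q$, finally give $\nabla \tilde{w}_q \in \mathcal{C}^{0,\alpha}$.

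Strict sub-linearity at infinity of $\tilde{w}_q$ then follows from $\mathcal{M}(|\nabla \tilde{w}_q|) \in L^{p^*}(\mathbb{R}^d)$: fixing the additive constant by a zero-average condition on $Q$, a Poincar\'e-Wirtinger inequality on unit cubes combined with a telescoping argument along a chain from the origin forces $|\tilde{w}_q(x)| = o(|x|)$ since $\|\mathcal{M}(|\nabla \tilde{w}_q|)\|_{L^{p^*}(\mathbb{R}^d\setminus B_R)} \to 0$ as $R\to \infty$. Uniqueness is a Liouville-type statement: the difference $v$ of two solutions satisfies $-\operatorname{div}(a\nabla v) = 0$ with $\nabla v \in (\mathcal{A}^p)^d$ and is strictly sub-linear, which forces $v$ to be constant via an adaptation of the argument of \cite{moser1992liouville}.

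The homogenization conclusion relies on the corrected two-scale ansatz
\begin{equation*}
u^{\varepsilon,1}(x) = u^*(x) + \varepsilon \sum_{i=1}^d \partial_i u^*(x)\bigl(w_{per,e_i}+\tilde{w}_{e_i}\bigr)(x/\varepsilon).
\end{equation*}
Substituting $u^{\varepsilon,1}$ into $-\operatorname{div}(a(\cdot/\varepsilon)\nabla\cdot)$, the residual splits into the periodic oscillatory term, which vanishes in $H^{-1}(\Omega)$ by periodic homogenization and produces $a_{per}^*$ as the effective coefficient, and a defect term involving $\tilde{a}(\cdot/\varepsilon)$ and $\nabla \tilde{w}_{e_i}(\cdot/\varepsilon)$ which vanishes because the $\mathcal{A}^p$-concentration of these functions forces their rescalings to converge to zero strongly in the relevant $L^2(\Omega)$ norms. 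A Lax-Milgram energy estimate together with the sub-linearity of $\tilde{w}_{e_i}$ then gives strong $L^2$ and weak $H^1$ convergence of $u^\varepsilon$ to $u^*$. The hardest step of the whole argument is the construction of $\tilde{w}_q$: showing that the Green-function singular integral preserves the adapted class $\mathcal{A}^p$, and in particular propagating the $\delta$-integrability from $F_q$ to $\nabla \tilde{w}_q$, is where assumption \eqref{hypothèses3} is essential and where the bulk of the technical work lies.
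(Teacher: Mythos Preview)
Your overall architecture is right, but the construction of $\tilde{w}_q$ contains a genuine gap. You represent $\nabla\tilde{w}_q$ via the Green function $G_a$ of the full non-periodic operator and invoke the Avellaneda--Lin pointwise bounds \cite{avellaneda1987compactness,avellaneda1991lp} to control the singular integral on $\mathcal{A}^p$. Those bounds, however, are established for \emph{periodic} coefficients; they do not follow from \eqref{hypothèses1}--\eqref{hypothèses22} alone, and extending them to $a=a_{per}+\tilde{a}$ is itself comparable in difficulty to the result you are trying to prove. The same circularity affects your discrete-gradient step: the equation you write for $\delta_k\tilde{w}_q$ has coefficient $a$ (in fact $a(\cdot+e_k)$, since $a$ is not periodic), so inferring $\delta\nabla\tilde{w}_q\in L^p$ from an $L^p$ right-hand side again requires $L^p\to L^p$ continuity of a \emph{non-periodic} solution operator, which is not provided by the references you cite. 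Your uniqueness sketch via \cite{moser1992liouville} has the same defect: that Liouville theorem is for periodic operators.

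The paper avoids this circularity by reversing the order of the argument. It first treats the periodic operator $-\operatorname{div}(a_{per}\nabla\cdot)$, where Avellaneda--Lin genuinely applies: approximating $f\in\mathcal{A}^p$ by $f_n\in L^p$, one gets $\nabla u_n\in L^p$, applies $\delta$ (which commutes with a periodic operator) to obtain $\delta\nabla u_n\in L^p$, and then invokes the discrete Gagliardo--Nirenberg--Sobolev inequality (Proposition~\ref{Corollaire_GNS_LP}) to deduce $\mathcal{M}(|\nabla u_n|)\in L^{p^*}$ --- so the $\mathcal{E}^p$ control comes from the discrete GNS, not from a direct kernel bound on local averages. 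Passage to the non-periodic coefficient is then achieved by a continuation argument along $a_s=a_{per}+s\tilde{a}$, $s\in[0,1]$: an a priori estimate in $\mathcal{A}^p\cap\mathcal{C}^{0,\alpha}$ (proved by contradiction, using that $\tilde{a}\to 0$ at infinity via Corollary~\ref{comportement_asymptotique_AP}) shows the set of admissible $s$ is closed, a contraction argument for small increments shows it is open, and the periodic case gives $s=0$. Uniqueness for general $a$ (Lemma~\ref{unicite_generale_AP}) likewise proceeds by truncating $\tilde{a}$ far from the origin to reduce to a small-perturbation regime. Your homogenization step via the two-scale ansatz is a legitimate alternative to the paper's compactness-plus-weak-limit identification, and your sub-linearity sketch is in the right spirit, but the corrector construction as written does not stand on its own.
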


Theorem \ref{Correcteur_A_P} states the existence of an adapted corrector and shows the convergence of $u^{\varepsilon}$ to an homogenized limit $u^*$. Similarly to the case of a periodic geometry perturbed by local defects of $L^r(\mathbb{R}^d)$ studied in \cite{blanc2018precised, blanc2018correctors, blanc2015local, blanc2012possible}, the gradient of our adapted corrector shares the same structure as the coefficient $a$~: it is the sum of a periodic function and a perturbation in $\mathcal{A}^p$. The perturbations of $\mathcal{A}^p$ does not impact the homogenized solution since the homogenized coefficient is the same as in the periodic problem \eqref{equationepsilon_new} when $a = a_{per}$. 
In the sequel, we establish Theorem \ref{Correcteur_A_P} in the case where $p<d$ and $p \neq 1$, the case $p=1$ being specific. Indeed, as we shall see in Section \ref{SectionAP_2}, our approach is based on the study of the general diffusion equation $-\operatorname{div}(a \nabla u) = \operatorname{div}(f)$ on $\mathbb{R}^d$, when $a$ belongs to $\left(L^2_{per}(\mathbb{R}^d) + \mathcal{A}^p\right)^{d\times d}$ (where $L^2_{per}(\mathbb{R}^d)$ denotes the space of locally $L^2$ periodic functions) and $ f$ belongs to $\left(\mathcal{A}^p\right)^d$. A key element of this study is a continuity result from $\left(\mathcal{A}^p\right)^d$ to $\left(\mathcal{A}^p\right)^d$ established in Proposition \ref{existence_cas_periodique_AP} (for periodic coefficients) and in Lemma \ref{estimee_A_priori_Ap} (in the general case) satisfied by the operator $-\nabla\left(-\operatorname{div}a\nabla\right)^{-1}\operatorname{div}$,  which is false when $p=1$ (see Remark \ref{remark_counter_p1} for a counter-example) and, in this case, we are not able to show the existence of $\Tilde{w}_q$ such that $\nabla \Tilde{w}_q \in \left(\mathcal{A}^1\right)^d$. However, since the coefficient $a$ belongs to $\left(L^{\infty}(\mathbb{R}^d)\right)^{d \times d}$, assumption~\eqref{hypothèses3} for $p=1$ implies that the same assumption is true for every $p>1$ and Theorem~\ref{Correcteur_A_P} gives the existence of an adapted corrector such that $\nabla \Tilde{w}_q$ belongs to $\left( \mathcal{A}^p \right)^d$ for every $p>1$. 

The existence of an adapted corrector is actually key to establish an homogenization theory in the context of problem \eqref{equationhomog_new}. We use it first in the proof of Theorem \ref{Correcteur_A_P} in order to identify the homogenized equation \eqref{equationhomog_new_per}. Moreover, if we define an approximation $\displaystyle u^{\varepsilon,1} = u^* + \varepsilon \sum_{i=1}^d \partial_i u^* w_{e_i}(./\varepsilon) $ such as \eqref{approximatesequence} in the periodic case but using our adapted corrector, it is also possible to describe the behavior of $\nabla u^{\varepsilon}$ in several topologies exactly as in the periodic context. If we denote $R^{\varepsilon} := u^{\varepsilon} - u^{\varepsilon,1}$, the results established in the present paper ensure that our setting is covered by the work of \cite{blanc2018precised} which studies problem \eqref{equationepsilon_new} under general assumptions (the existence of a corrector strictly sublinear at infinity in particular) and shows the convergence of $R^{\varepsilon}$ to 0 for the topology of $W^{1,r}$ when $r \geq 2$. Some properties related to the strict sublinearity of our corrector therefore allow to make precise the convergence rate of $\nabla R^{\varepsilon}$ (see estimate \eqref{convergence_rate_ap}). 

We also note that assumption \eqref{hypothèses22} regarding the H\"older continuity of the coefficient together with~\eqref{hypothèses3} implies that $\Tilde{a}$ belongs to $\left(L^q(\mathbb{R}^d)\right)^{d\times d}$ for a given exponent $p^*<q$ as a consequence of Proposition \ref{prop_lebesgue_inclusion} established in Section~\ref{SectionAp_1}. It follows that \cite{blanc2018correctors, blanc2015local, blanc2012possible} actually cover our setting and show the existence of a corrector of the form $w = w_{per} + \Tilde{w}$ where $\Tilde{w}$ a is solution to \eqref{eq_correcteur_A_p} such that $\nabla \Tilde{w} \in \left(L^q(\mathbb{R}^d)\right)^d$ for this particular exponent $q$. However, the results of Theorem \ref{Correcteur_A_P} are stronger in our approach : it ensures that the perturbed part of our corrector has a gradient in $\left(\mathcal{A}^p\right)^d$ and, since $p^* < q$, it provides better properties regarding its integrability at infinity. It is indeed shown in Section \ref{Subsection_homog_ap} that the theoretical convergence rates of $\nabla R^{\varepsilon}$ are improved if we assume $\Tilde{a} \in \left(\mathcal{A}^p\right)^{d \times d}$ rather than only $\Tilde{a} \in \left(L^q(\mathbb{R}^d)\right)^{d\times d}$. Besides proving the stronger results of Theorem \ref{Correcteur_A_P}, one contribution of the present study is also to put in place a whole methodological machinery for functions with integrable discrete gradients that allows to obtain homogenization results, similarly but \emph{independently} from the proofs and the arguments conducted in the context of $L^q$ functions. Our aim is, in particular, to highlight the fact that the methodology employed in \cite{blanc2018correctors, blanc2015local, blanc2012possible} only requires to know the global behavior of $a$ at infinity, and the non-local control of the averages of $\Tilde{a}$ (in contrast to the assumptions of $L^q$ integrability in \cite{blanc2018correctors, blanc2015local, blanc2012possible}) given by Proposition \ref{decomposition_fonctions_A_p} is sufficient to perform the homogenization of problem~\eqref{equationepsilon_new}. 
Although we have not pursued in this direction, we also believe that the so-called large-scale regularity results established in \cite{gloria2020regularity} could possibly be adapted to our setting in order to obtain homogenization results for problem~\eqref{equationepsilon_new}, similar to those of Theorem \ref{Correcteur_A_P} but without assumption of H\"older regularity satisfied by~$a$. 


\subsubsection{The case $p>d$}

When $p\geq d$, we show that the homogenization of problem \eqref{equationepsilon_new} is not always possible. More  precisely, we exhibit a couple of sequences $u^{\varepsilon}$ that have subsequential limits. Our two counter examples slowly oscillate at infinity (see Figure \ref{figf2ap} for examples).\\


Our article is organized as follows. In Section \ref{SectionAp_1}, we study the properties of the space $\mathbf{A}^p$ in the case $p<d$ and we establish the discrete version of the Gagliardo-Nirenberg-Sobolev inequality stated in Proposition \ref{decomposition_fonctions_A_p}. In section \ref{SectionAP_2}, we prove Theorem \ref{Correcteur_A_P}. Finally, in Section~\ref{SectionAP_3}, we study the homogenization problem \eqref{equationepsilon_new} in the case $p\geq d$.

\section{Properties of the functional space $ \mathbf{A}^p$, the case $p<d$}

\label{SectionAp_1}

Throughout this section, we assume that $d\geq 2$ and that $p\in [1,d[$. We study the properties of the space $\mathbf{A}^p$ defined by \eqref{circ_AP}. The main idea is, of course, to see the operator $\delta$ as a discrete gradient operator and to draw an analogy between this discrete gradient and the usual continuous gradient~$\nabla$. We show that the functions of $\mathbf{A}^p$ satisfies several properties similar to those satisfied by the functions $f$ such that $\nabla f \in \left(L^p(\mathbb{R}^d)\right)^d$ and we establish a discrete variant of the Gagliardo-Nirenberg-Sobolev inequality proving that the functions $f \in  \mathbf{A}^p$ satisfy, up to the addition of a periodic function, some properties of integrability. More precisely, we prove that such a function $f$ can be split as the the sum of a periodic function $f_{per}$ and a function $\Tilde{f}\in \mathcal{A}^p$, which belongs, up to a local averaging (made precise in formula \eqref{normeE_p} above), to the particular Lebesgue space $L^{p^*}(\mathbb{R}^d)$.


\subsection{Properties of $\mathcal{E}^p$ and $\mathcal{A}^p$}

To start with, we need to introduce several properties satisfied by the spaces $\mathcal{E}^p$ and $\mathcal{A}^p$, respectively defined in \eqref{defEp} and \eqref{defAp}, and we establish some asymptotic properties regarding the average value and the strict sub-linearity of the functions belonging to $\mathcal{A}^p$.


We first claim that the spaces $\mathcal{A}^p$ and $\mathcal{E}^p$ respectively equipped with the norms \eqref{normeE_p} and \eqref{normAp} are two Banach spaces. The proof is given in \cite{goudey} and consists in considering $\mathcal{E}^p$ as a particular subset of $L^{p^*}\left(\mathbb{R}^d, L^1_{loc}(\mathbb{R}^d)\right)$, we skip it for the sake of brevity. A classical property satisfied by such a subspace of $L^{p^*}\left(\mathbb{R}^d, L^1_{loc}(\mathbb{R}^d)\right)$ and that will be useful in the sequel is given in the following proposition.

\begin{prop}
\label{sous-suite}
Let $\left(f_n\right)_{n \in \mathbb{N}}$ be a sequence of functions of $\mathcal{E}^p$ that converges to $f$ in $\mathcal{E}^p$. Then, there exists a sub-sequence $f_{\varphi(n)}$ that converges to $f$ in $L^{1}_{loc}(\mathbb{R}^d)$.   
\end{prop}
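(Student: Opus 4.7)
The plan is to translate convergence in $\mathcal{E}^p$ into $L^{p^*}(\mathbb{R}^d)$-convergence of the local averages $\mathcal{M}(|f_n-f|)$, then use the classical fact that strong convergence in a Lebesgue space forces pointwise a.e. convergence along a subsequence, and finally pass from this pointwise control of the averages to $L^1_{loc}$-convergence via an elementary covering argument.

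The first step is immediate from the definition \eqref{normeE_p} of the norm on $\mathcal{E}^p$: the hypothesis $f_n \to f$ in $\mathcal{E}^p$ reads exactly as $\mathcal{M}(|f_n-f|) \to 0$ in $L^{p^*}(\mathbb{R}^d)$. The standard extraction theorem then yields an extraction $\varphi$ and a Lebesgue-null set $N \subset \mathbb{R}^d$ such that, for every $z \in \mathbb{R}^d \setminus N$,
\[
\mathcal{M}\bigl(|f_{\varphi(n)}-f|\bigr)(z) = \int_{Q+z} |f_{\varphi(n)}(x)-f(x)|\,dx \longrightarrow 0 \quad \text{as } n \to \infty.
\]

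For the final step, I would fix an arbitrary compact $K \subset \mathbb{R}^d$. Because $N$ has zero Lebesgue measure, $\mathbb{R}^d \setminus N$ is dense, and for each $x \in K$ the set $\{z \in \mathbb{R}^d : x \in Q+z\} = \prod_{i=1}^d (x_i-1,x_i)$ is a nonempty open cube which therefore meets $\mathbb{R}^d \setminus N$; one can thus pick $z_x \in \mathbb{R}^d \setminus N$ with $x \in Q + z_x$. The family $(Q+z_x)_{x \in K}$ is an open cover of the compact set $K$, from which one extracts a finite sub-cover $K \subset \bigcup_{i=1}^m (Q+z_i)$ with $z_i \in \mathbb{R}^d \setminus N$. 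The monotonicity of the integral then gives
\[
\int_K |f_{\varphi(n)}-f|(x)\,dx \leq \sum_{i=1}^m \int_{Q+z_i}|f_{\varphi(n)}-f|(x)\,dx = \sum_{i=1}^m \mathcal{M}\bigl(|f_{\varphi(n)}-f|\bigr)(z_i) \longrightarrow 0,
\]
which yields the desired convergence in $L^{1}_{loc}(\mathbb{R}^d)$.

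I do not anticipate any serious obstacle here, since the statement is essentially a soft combination of two standard tools. The only point requiring a small amount of care is the covering step, and it works precisely because the set on which the pointwise convergence holds has full Lebesgue measure and is therefore dense in $\mathbb{R}^d$.
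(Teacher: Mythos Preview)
Your proof is correct. The paper does not spell out its own argument here, deferring instead to \cite{goudey} and only indicating that one should regard $\mathcal{E}^p$ as a subset of the Bochner-type space $L^{p^*}\bigl(\mathbb{R}^d, L^1_{loc}(\mathbb{R}^d)\bigr)$, so that the subsequential extraction becomes a standard property of convergence in vector-valued Lebesgue spaces. Your route is essentially the same mechanism stripped of the Bochner-space language: you extract a.e.\ pointwise convergence of the \emph{scalar} function $z\mapsto \mathcal{M}(|f_{\varphi(n)}-f|)(z)$ directly from its $L^{p^*}$ convergence, and then upgrade this to $L^1_{loc}$ convergence of $f_{\varphi(n)}-f$ via the finite covering by cubes $Q+z_i$ with $z_i$ chosen in the full-measure ``good'' set. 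The advantage of your version is that it is entirely self-contained and avoids introducing any vector-valued integration framework; the paper's framing, on the other hand, packages the same steps into a single appeal to a known abstract result.
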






Our next aim is to study the asymptotic behavior of sufficiently regular functions of $\mathcal{A}^p$. We begin by proving that uniformly continuous functions in $\mathcal{A}^p$ vanish at infinity. 

\begin{prop}
\label{prop_comportement_asymptotique_Ep}
Let $f$ be an uniformly continuous function on $\mathbb{R}^d$ such that $\displaystyle  \mathcal{M}\left(\left|f\right| \right)(x)$ vanishes at infinity.
Then $\displaystyle \lim_{|x| \rightarrow \infty} f(x) = 0$.
\end{prop}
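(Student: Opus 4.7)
The plan is to argue by contradiction, exploiting uniform continuity to transfer a pointwise lower bound on $|f|$ at some point $x$ into a bulk lower bound on $|f|$ on an entire ball around $x$, which then forces a lower bound on the local average $\mathcal{M}(|f|)$ at a nearby point.

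First I would assume that $f(x)$ does not tend to $0$ at infinity. Then there exist $\varepsilon_0 > 0$ and a sequence $(x_n)_{n\in\mathbb{N}}$ with $|x_n|\to\infty$ such that $|f(x_n)|\geq \varepsilon_0$ for every $n$. By uniform continuity of $f$ on $\mathbb{R}^d$, there exists $\eta > 0$ such that $|x-y|\leq \eta$ implies $|f(x)-f(y)|\leq \varepsilon_0/2$. Up to shrinking $\eta$, I may assume $\eta < 1/2$, so that the closed ball $\overline{B_\eta(x_n)}$ is contained in the open unit cube $Q + z_n$ with $z_n := x_n - (1/2,\ldots,1/2)$. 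On this ball, the triangle inequality gives $|f(y)|\geq |f(x_n)| - \varepsilon_0/2 \geq \varepsilon_0/2$.

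Next I would bound the local average from below: for every $n$,
\begin{equation*}
\mathcal{M}(|f|)(z_n) = \int_{Q+z_n} |f(y)|\,dy \;\geq\; \int_{B_\eta(x_n)} |f(y)|\,dy \;\geq\; \frac{\varepsilon_0}{2}\,|B_\eta|.
\end{equation*}
Since $|x_n|\to\infty$ and $z_n$ differs from $x_n$ by a fixed vector, we also have $|z_n|\to\infty$. The hypothesis $\lim_{|x|\to\infty} \mathcal{M}(|f|)(x) = 0$ therefore yields $\mathcal{M}(|f|)(z_n)\to 0$, contradicting the lower bound above. Hence $f(x)\to 0$ as $|x|\to\infty$.

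The argument is essentially elementary and presents no real obstacle; the only point that needs care is the geometric choice of $\eta$ and $z_n$ ensuring that $B_\eta(x_n)$ sits inside a single unit cube of the form $Q+z_n$, so that a pointwise bound on $|f|$ translates into a bound on the average operator $\mathcal{M}$ defined in \eqref{normeE_p}.
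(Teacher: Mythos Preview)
Your proof is correct and follows essentially the same contradiction argument as the paper: uniform continuity propagates the pointwise lower bound $|f(x_n)|\geq\varepsilon_0$ to a small ball, and this forces $\mathcal{M}(|f|)$ to stay bounded away from zero along a sequence going to infinity. If anything, your version is slightly more careful than the paper's in explicitly choosing the shift $z_n = x_n - (1/2,\dots,1/2)$ so that $B_\eta(x_n)\subset Q+z_n$; the paper writes $\mathcal{M}(|f|)(x_{R_0})\geq\int_{B_\delta(x_{R_0})}|f|$ directly, tacitly assuming such an inclusion.
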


\begin{proof}
We argue by contradiction and assume that $f$ does not converge to 0 at infinity. Then, there exists $\varepsilon >0$ such that for every $R>0$, there exists $x_R \in \mathbb{R}^d$ with $|x_R|>R$ and $|f(x_R)| > \varepsilon$. $f$ being uniformly continuous, there exists $\delta>0$ such that for every $R>0$ and for every $y\in B_{\delta}(x_R)$, we have $\displaystyle |f(y)| > \frac{\varepsilon}{2}$. Since $\displaystyle \lim_{|x| \rightarrow \infty} \mathcal{M}\left(\left|f\right| \right)(x) = 0$, there exists $R_0>0$ such that
$|x|>R_0$ implies $\displaystyle \mathcal{M}\left(\left|f\right| \right)(x)< \frac{|B_{\delta}|}{2}\ \varepsilon$. On the other hand, $\displaystyle \mathcal{M}\left(\left|f\right| \right)(x_{R_0}) \geq \int_{B_{\delta}(x_{R_0})} |f(z)| dz \geq \frac{|B_{\delta}| }{2} \ \varepsilon$.
Since $|x_{R_0}|>R_0$, we have a contradiction. 
\end{proof}


From the previous proposition, we deduce the following corollary. 

\begin{corol}
\label{comportement_asymptotique_AP}
Let $f \in \mathcal{E}^p \cap \mathcal{C}^{0, \alpha}(\mathbb{R}^d)$ for $\alpha \in ]0,1[$, then $\displaystyle \lim_{|x| \rightarrow \infty} f(x) = 0$.
\end{corol}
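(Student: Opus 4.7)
The plan is to invoke Proposition \ref{prop_comportement_asymptotique_Ep} applied to $f$. Its two hypotheses are (i) that $f$ is uniformly continuous on $\mathbb{R}^d$ and (ii) that $\mathcal{M}(|f|)(x)$ tends to $0$ at infinity. The first hypothesis is immediate since any function in $\mathcal{C}^{0,\alpha}(\mathbb{R}^d)$ is uniformly continuous, its modulus of continuity being controlled by $\|f\|_{\mathcal{C}^{0,\alpha}}\,r^{\alpha}$.

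For the second hypothesis, set $g := \mathcal{M}(|f|)$; by assumption $g \in L^{p^*}(\mathbb{R}^d)$. I would first establish that $g$ is Lipschitz continuous on $\mathbb{R}^d$. Since $f \in \mathcal{C}^{0,\alpha}(\mathbb{R}^d) \subset L^{\infty}(\mathbb{R}^d)$, a direct estimate gives, for $z,z' \in \mathbb{R}^d$,
\[
|g(z)-g(z')| \leq \int_{(Q+z)\triangle(Q+z')}|f(x)|\,dx \leq \|f\|_{L^{\infty}(\mathbb{R}^d)}\,\bigl|(Q+z)\triangle(Q+z')\bigr|,
\]
and elementary geometry bounds the volume of the symmetric difference of two unit cubes displaced by $z-z'$ by $C_d |z-z'|$ for $|z-z'| \leq 1$, with $C_d$ depending only on the dimension. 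Hence $g$ is Lipschitz, and in particular uniformly continuous.

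A uniformly continuous function in $L^{p^*}(\mathbb{R}^d)$ (with $p^* < \infty$) must vanish at infinity, as one sees by a standard contradiction argument: if there were a sequence $x_n$ with $|x_n| \to \infty$ and $g(x_n) \geq \varepsilon > 0$, uniform continuity would provide a radius $\delta > 0$ independent of $n$ such that $g \geq \varepsilon/2$ on each ball $B_{\delta}(x_n)$; extracting a subsequence so that these balls are pairwise disjoint would contradict $g \in L^{p^*}(\mathbb{R}^d)$. This verifies (ii), and Proposition \ref{prop_comportement_asymptotique_Ep} yields $\lim_{|x|\to\infty} f(x) = 0$. I foresee no genuine obstacle beyond the verification of the Lipschitz estimate on $\mathcal{M}(|f|)$, which crucially relies on the $L^{\infty}$ bound provided by the Hölder regularity assumption.
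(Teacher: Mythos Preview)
Your proof is correct and follows essentially the same strategy as the paper: verify the two hypotheses of Proposition~\ref{prop_comportement_asymptotique_Ep} by showing that $\mathcal{M}(|f|)$ is uniformly continuous and in $L^{p^*}(\mathbb{R}^d)$, hence vanishes at infinity. The only minor difference is that the paper obtains $\mathcal{M}(|f|)\in\mathcal{C}^{0,\alpha}$ directly from the H\"older bound via the change of variables $\bigl|\mathcal{M}(|f|)(x)-\mathcal{M}(|f|)(z)\bigr|\le\int_Q\bigl||f(y+x)|-|f(y+z)|\bigr|\,dy\le\|f\|_{\mathcal{C}^{0,\alpha}}|x-z|^{\alpha}$, whereas you derive a Lipschitz estimate from the $L^{\infty}$ bound and the volume of the symmetric difference of translated cubes; either route suffices.
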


\begin{proof}
Since $f \in \mathcal{C}^{0,\alpha}(\mathbb{R}^d)$, the function $\mathcal{M}(|f|)$ also belongs to $C^{0,\alpha}(\mathbb{R}^d)$ and we have 
$$ \displaystyle \left|\mathcal{M}(|f|)(x) - \mathcal{M}(|f|)(z)\right| \leq \|f\|_{C^{0,\alpha}(\mathbb{R}^d)}|x-z|^{\alpha},$$ for every $x,z \in \mathbb{R}^d$. In addition, since $\mathcal{M}(|f|)\in L^{p^*}(\mathbb{R}^d)$, it follows that $\displaystyle \lim_{|x| \rightarrow \infty} \mathcal{M}\left(\left|f\right| \right)(x) = 0$ and we conclude using Proposition~\ref{prop_comportement_asymptotique_Ep}.
\end{proof}

The next proposition regards the average value of the functions in $\mathcal{A}^p$ and is actually key for the homogenization of problem \eqref{equationepsilon_new}. Indeed, as stated in Corollary \ref{convergenceLinfinistar_AP}, it implies a weak convergence to 0 of the sequence $(|f(./\varepsilon)|)_{\varepsilon>0}$, which means in a certain sense that a perturbation of $\mathcal{A}^p$ has no macroscopic impact on the ambient background. This property shall be particularly useful to identify the homogenized coefficient $a^*$ associated with problem \eqref{equationepsilon_new}. 


\begin{prop}
\label{proposition_moyenne_Ap}
Let $f\in \mathcal{A}^p$. Then, $\displaystyle\lim_{R \rightarrow \infty}  \frac{1}{|B_R|} \int_{B_R(x_0)} |f| = 0$ for every $x_0 \in \mathbb{R}^d$ and we have the following convergence rate :
\begin{equation}
\label{convergence_rate_delta}
    \frac{1}{|B_R|} \int_{B_R(x_0)} |f| \leq  \frac{C}{R^{d/p^*}},
\end{equation}
where $C>0$ is independent of $R$ and $x_0$. 
\end{prop}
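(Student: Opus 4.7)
My plan is to bound the integral $\int_{B_R(x_0)} |f|$ directly by an integral of $\mathcal{M}(|f|)$ over a slightly enlarged ball, and then to apply H\"older's inequality in order to exploit the $L^{p^*}$-integrability built into the norm $\|f\|_{\mathcal{E}^p}$. The announced decay rate $R^{-d/p^*}$ should appear automatically from the balance between the volume factor $|B_R|^{1-1/p^*}$ produced by H\"older and the normalization $|B_R|^{-1}$ in the averaged quantity.

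First, using the elementary identity $1 = \int_{\mathbb{R}^d} \mathbf{1}_Q(w-z)\, dz$ (valid for every $w$ since $|Q|=1$) together with Fubini's theorem, I would rewrite
\begin{equation}
\int_{B_R(x_0)} |f(w)|\, dw = \int_{\mathbb{R}^d} \left( \int_{(Q+z)\cap B_R(x_0)} |f(w)|\, dw \right) dz \leq \int_{\Omega_R} \mathcal{M}(|f|)(z)\, dz,
\end{equation}
where $\Omega_R := \{z\in\mathbb{R}^d : (Q+z)\cap B_R(x_0) \ne \emptyset \} = B_R(x_0) - Q$. A direct geometric observation shows that $\Omega_R$ is contained in a ball of radius $R+\sqrt{d}$ centered at $x_0-\tfrac{1}{2}(1,\ldots,1)$, so that $|\Omega_R| \leq C_d R^d$ for $R\geq 1$, with $C_d$ depending only on the dimension.

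Next, since $p^* \geq d/(d-1) > 1$, H\"older's inequality with conjugate exponents $p^*$ and $p^*/(p^*-1)$ yields
\begin{equation}
\int_{\Omega_R} \mathcal{M}(|f|)(z)\, dz \leq |\Omega_R|^{1-1/p^*}\, \|\mathcal{M}(|f|)\|_{L^{p^*}(\mathbb{R}^d)} \leq C_d\, R^{d(1-1/p^*)}\, \|f\|_{\mathcal{E}^p},
\end{equation}
for all $R\geq 1$. Dividing by $|B_R| = c_d R^d$ and combining with the previous display gives
\begin{equation}
\frac{1}{|B_R|} \int_{B_R(x_0)} |f| \leq \frac{C}{R^{d/p^*}}\, \|f\|_{\mathcal{E}^p},
\end{equation}
with a constant $C$ depending only on $d$ and $p$, independent of $x_0$ and $R \geq 1$. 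Since $d/p^* > 0$, sending $R\to \infty$ yields both the claimed limit and the convergence rate \eqref{convergence_rate_delta}.

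The argument is essentially a bookkeeping exercise once the Fubini step of Step~1 is recognized: the main (but mild) difficulty is the realization that $\mathcal{M}(|f|)(z)$ is a natural pointwise upper bound for the integral of $|f|$ over any portion of $Q+z$, so that integrating in $z$ converts the $L^1$ integral of $|f|$ on $B_R(x_0)$ into an $L^1$ integral of $\mathcal{M}(|f|)$ on a slightly enlarged ball. From there, H\"older's inequality in the $L^{p^*}$-norm does all the remaining work. The uniformity of $C$ in $x_0$ is automatic since $\|\mathcal{M}(|f|)\|_{L^{p^*}(\mathbb{R}^d)}$ and the enlargement factor depend only on $d$.
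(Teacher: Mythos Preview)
Your proof is correct and follows essentially the same approach as the paper's: both arguments use Fubini (you via the identity $1=\int \mathbf{1}_Q(w-z)\,dz$, the paper via a change of variables after multiplying by $|Q|=1$) to bound $\int_{B_R(x_0)}|f|$ by the integral of $\mathcal{M}(|f|)$ over a slightly enlarged ball, and then apply H\"older's inequality to extract the $R^{-d/p^*}$ rate. The only cosmetic difference is that the paper enlarges to $B_{2R}(x_0)$ whereas you enlarge to $B_{R+\sqrt d}(x_0)-$type sets, which has no effect on the outcome.
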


\begin{proof}
Let $R>0$ and $x_0 \in \mathbb{R}^d$. Since $|Q|=1$, we have : 
$$ \int_{B_R(x_0)} |f|(x)dx = \int_{Q} \int_{B_R(x_0)} |f|(x) dxdz = \int_{Q} \int_{B_R(x_0)-z} |f|(x+z) dx dz.$$
In addition, for $R$ large enough and for every $z \in Q$, we have $B_R(x_0)-z\subset B_{2R}(x_0)$. Using the Fubini theorem, we therefore obtain 
\begin{align*}
   \int_{Q} \int_{B_R(x_0)-z} |f|(x+z) dx dz & \leq \int_{Q} \int_{B_{2R}(x_0)} |f|(x+z) dx dz = \int_{B_{2R}(x_0)} \mathcal{M}(|f|)(x) dx. 
\end{align*}
The H\"older inequality next gives
\begin{align*}
   \frac{1}{|B_R|} \int_{B_{2R}(x_0)} \mathcal{M}(|f|) & \leq \frac{|B_{2R}|^{1/(p^*)'}}{|B_R|} \|f\|_{\mathcal{E}^p}  = C(d) \frac{1}{R^{d/p^*}}\|f\|_{\mathcal{E}^p},
\end{align*}
where $(p^*)'$ is the conjugate exponent of $p^*$ and $C(d)>0$ depends only on the ambient dimension $d$. We obtain \eqref{convergence_rate_delta}.  
\end{proof}

\begin{corol}
\label{convergenceLinfinistar_AP}
Let $u \in \mathcal{A}^p \cap L^{\infty}(\mathbb{R}^d)$, then the sequence $\left(|u(./\varepsilon)|\right)_{\varepsilon>0}$ converges to 0 for the weak-* topology of $L^{\infty}(\mathbb{R}^d)$ as $\varepsilon$ vanishes.
\end{corol}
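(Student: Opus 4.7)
The plan is to exploit the uniform $L^\infty$-bound and reduce the weak-$\ast$ testing to characteristic functions of balls, then apply the decay estimate \eqref{convergence_rate_delta} after a rescaling.

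First I would observe that the family $\bigl(|u(\cdot/\varepsilon)|\bigr)_{\varepsilon>0}$ is uniformly bounded in $L^\infty(\mathbb{R}^d)$ by $\|u\|_{L^\infty}$. Since functions of the form $\chi_{B_R(x_0)}$ generate a dense subspace of $L^1(\mathbb{R}^d)$ (via linear combinations and density of simple functions), it suffices, by a standard $3\varepsilon$-argument, to show that
\begin{equation*}
\int_{B_R(x_0)} |u(x/\varepsilon)|\,dx \xrightarrow[\varepsilon\to 0]{} 0
\end{equation*}
for every fixed $R>0$ and $x_0\in\mathbb{R}^d$.

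Next, by the change of variables $y=x/\varepsilon$, the integral becomes
\begin{equation*}
\int_{B_R(x_0)} |u(x/\varepsilon)|\,dx = \varepsilon^d \int_{B_{R/\varepsilon}(x_0/\varepsilon)} |u(y)|\,dy.
\end{equation*}
Applying Proposition \ref{proposition_moyenne_Ap} with radius $R/\varepsilon$ and center $x_0/\varepsilon$, I get a constant $C>0$ independent of $\varepsilon$ such that
\begin{equation*}
\frac{1}{|B_{R/\varepsilon}|}\int_{B_{R/\varepsilon}(x_0/\varepsilon)} |u| \;\leq\; \frac{C}{(R/\varepsilon)^{d/p^*}}.
\end{equation*}
Multiplying by $\varepsilon^d |B_{R/\varepsilon}| = |B_R|$, the bound becomes
\begin{equation*}
\varepsilon^d \int_{B_{R/\varepsilon}(x_0/\varepsilon)} |u| \;\leq\; C\,|B_R|\,\bigl(\varepsilon/R\bigr)^{d/p^*} \;=\; C'\,R^{d-d/p^*}\,\varepsilon^{d/p^*},
\end{equation*}
and since $d/p^*>0$, the right-hand side tends to $0$ as $\varepsilon\to 0$.

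There is no real obstacle here; the only thing to spell out carefully is the density reduction, namely that weak-$\ast$ convergence of a uniformly $L^\infty$-bounded family can be tested on the $L^1$-closure of $\mathrm{span}\{\chi_{B_R(x_0)}\}$, which is all of $L^1(\mathbb{R}^d)$. Combined with the explicit decay rate above, this yields the claim.
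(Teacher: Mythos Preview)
Your proof is correct and follows essentially the same approach as the paper's: reduce to characteristic functions of balls via the uniform $L^\infty$-bound and density of simple functions in $L^1$, rescale, and invoke Proposition~\ref{proposition_moyenne_Ap}. You are in fact slightly more explicit than the paper, since you extract the quantitative rate $\varepsilon^{d/p^*}$ from \eqref{convergence_rate_delta}, whereas the paper simply cites the qualitative convergence to $0$.
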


\begin{proof}
We fix $R>0$ and we begin by considering $\varphi = 1_{B_R(x_0)}$ for $x_0 \in \mathbb{R}^d$. For every $\varepsilon>0$, we have :
\begin{align*}
\left|\int_{\mathbb{R}^d} |u(x/\varepsilon)|\varphi(x) dx\right| = \int_{B_R(x_0)} \left| u(x/\varepsilon) \right| dx & = \varepsilon^d   \int_{B_{R/\varepsilon}(x_0/\varepsilon)}\left| u(y) \right|dy.
\end{align*}
We therefore use Proposition \ref{proposition_moyenne_Ap} and we obtain $\displaystyle \left|\int_{\mathbb{R}^d} |u(x/\varepsilon)|\varphi(x) dx\right| \underset{\varepsilon\to 0}{\longrightarrow} 0$.
We conclude using the density of simple functions in $L^1(\mathbb{R}^d)$.
\end{proof}

We next show that every function with a gradient in $\left(\mathcal{A}^p \cap L^{\infty}(\mathbb{R}^d)\right)^d$ is strictly sub-linear at infinity. 

\begin{prop}
\label{Sous_linéarite_AP}
Let $\frac{d}{2}< p<d$ and $u\in L^1_{loc}(\mathbb{R}^d)$ such that $\nabla u \in \left(\mathcal{E}^p \cap L^{\infty}(\mathbb{R}^d)\right)^d$. Then $u$ is strictly sub-linear at infinity.
More precisely, we have
\begin{equation}
\label{estimme_ss_linearite_ap}
    \dfrac{|u(x)|}{1+|x|} = O\left(\dfrac{1}{|x|^{d/p^*}}\right).
\end{equation}

\end{prop}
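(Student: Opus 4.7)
The plan is to establish the pointwise bound $|u(x)| \leq C(1+|x|^{1-d/p^*})$, from which the stated estimate follows by dividing by $1+|x|$ and using that $1-d/p^*>0$. This positivity is precisely where the hypothesis $p>d/2$ enters: it guarantees $p^*>d$, hence $d/p^*<1$. The first observation is that $\nabla u \in L^{\infty}(\mathbb{R}^d)$ implies that $u$ is Lipschitz continuous, so that
$$\left|u(x) - \frac{1}{|B_1|}\int_{B_1(x)} u\right| \leq \|\nabla u\|_{L^{\infty}}$$
for every $x \in \mathbb{R}^d$, and the problem reduces to controlling the difference of unit-ball averages $|\langle u\rangle_{B_1(x)} - \langle u\rangle_{B_1(0)}|$, where I write $\langle u\rangle_{B_r(y)} := \frac{1}{|B_r|}\int_{B_r(y)} u$.

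The second step is a dyadic chaining argument at a fixed center $y \in \mathbb{R}^d$. For $R_k = 2^k$, the $L^1$ Poincaré inequality on $B_{R_{k+1}}(y)$ combined with the inclusion $B_{R_k}(y) \subset B_{R_{k+1}}(y)$ gives
$$\left|\langle u\rangle_{B_{R_k}(y)} - \langle u\rangle_{B_{R_{k+1}}(y)}\right| \leq \frac{C R_{k+1}}{|B_{R_k}|}\int_{B_{R_{k+1}}(y)} |\nabla u| \leq C' R_{k+1} \cdot \frac{1}{|B_{R_{k+1}}|}\int_{B_{R_{k+1}}(y)} |\nabla u|.$$
Now, since $\nabla u \in (\mathcal{E}^p)^d$, the argument of Proposition \ref{proposition_moyenne_Ap} (which only invokes the $\mathcal{E}^p$ part of the norm) yields $\frac{1}{|B_R|}\int_{B_R(y)} |\nabla u| \leq C R^{-d/p^*}$ uniformly in $y$, so each telescoping term is bounded by $C R_{k+1}^{1-d/p^*}$. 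Summing over $k=0,\dots,K-1$ with $2^K \simeq |x|$ produces a geometric series of ratio $2^{1-d/p^*}>1$, which is therefore dominated by its last term and bounded by $C |x|^{1-d/p^*}$.

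It remains to compare averages at the two distinct centers $0$ and $x$. I would take $R = 2|x|$, so that both $B_R(0)$ and $B_R(x)$ lie in $B_{2R}(0)$, and use the latter as an intermediary: the same Poincaré/averaging calculation (exploiting that $|B_R|$ and $|B_{2R}|$ are comparable) gives
$$|\langle u\rangle_{B_R(x)} - \langle u\rangle_{B_{2R}(0)}| + |\langle u\rangle_{B_R(0)} - \langle u\rangle_{B_{2R}(0)}| \leq C R^{1-d/p^*}.$$
Chaining this with the two dyadic bounds of the second step (one centered at $0$, one at $x$) between scales $1$ and $R$, and then adding back the Lipschitz remainder, yields $|\langle u\rangle_{B_1(x)} - \langle u\rangle_{B_1(0)}| \leq C|x|^{1-d/p^*}$, and hence $|u(x)| \leq C(1+|x|^{1-d/p^*})$. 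The main delicate point will be handling the geometric sum in step 2 correctly: the chain only cooperates thanks to the strictly positive exponent $1-d/p^*$, so the assumption $p>d/2$ is genuinely essential for the quantitative rate. Without it, the chain at best delivers an $O(\log|x|)$ or $O(1)$ bound, which would still yield strict sub-linearity but not the optimal decay displayed in \eqref{estimme_ss_linearite_ap}.
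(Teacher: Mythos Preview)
Your argument is correct and takes a genuinely different route from the paper. The paper works pointwise via the Morrey-type potential estimate
\[
|u(x)-u(0)| \leq \int_{B_r(x)}\frac{|\nabla u(w)|}{|x-w|^{d-1}}\,dw + \int_{B_r}\frac{|\nabla u(w)|}{|w|^{d-1}}\,dw, \qquad r=|x|,
\]
then averages in an auxiliary variable $z\in Q$ to replace $|\nabla u|$ by $\mathcal{M}(|\nabla u|)$ away from the singularity, and concludes with H\"older (using $(d-1)\frac{p^*}{p^*-1}<d$, which is exactly $p^*>d$); the $L^\infty$ bound on $\nabla u$ absorbs the contribution near the pole. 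Your approach instead runs a Campanato-style telescoping of ball averages, feeding in the decay rate $R^{-d/p^*}$ from Proposition~\ref{proposition_moyenne_Ap} (whose proof, as you observe, only uses the $\mathcal{E}^p$ norm) and summing the resulting geometric series. Both arguments hinge on the same quantitative input~--- the $\mathcal{E}^p$ control of large-ball averages of $|\nabla u|$~--- and both genuinely need $p>d/2$: in the paper for the H\"older step to close, in yours for the geometric ratio $2^{1-d/p^*}$ to exceed~$1$. Your route is slightly more robust in that it avoids the explicit Riesz kernel manipulation and the somewhat delicate near/far splitting; the paper's route is more self-contained in that it does not chain scales. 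One small point worth making explicit in your write-up: the bound from Proposition~\ref{proposition_moyenne_Ap} is derived for $R$ large (it uses $B_R(x_0)-z\subset B_{2R}(x_0)$ for $z\in Q$), so the first finitely many dyadic steps should be handled directly by $\|\nabla u\|_{L^\infty}$, which contributes only an $O(1)$ term.
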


\begin{proof}
Let $x\in \mathbb{R}^d$ such that $x \neq 0$. We denote by $r = |x|$ and we have 
\begin{align}
    |u(x) - u(0)|  &  \displaystyle \leq  \int_{B_r(x)}  \dfrac{|\nabla u(w)|}{|x-w|^{d-1}}dw + \int_{B_r} \dfrac{|\nabla u(w)|}{|w|^{d-1}} dw \\
    \label{sous_lin_morrey_ap}
    & = \displaystyle \int_{Q} \left( \int_{B_r(x-z)} \dfrac{|\nabla u(w+z)|}{|x-w-z|^{d-1}}dw + \int_{B_r(-z)} \dfrac{|\nabla u(w+z)|}{|w+z|^{d-1}} dw\right)dz.
\end{align}
The first inequality above is established for instance in \cite[p.266]{evans10} in the proof of the Morrey's inequality (\cite[Theorem 4 p.266]{evans10}).
We next remark that $B_r(x-z) \subset B_{2r}(x)$ for every $r$ sufficiently large and every $z\in Q$. For $z \in Q$, since $|z|\leq \sqrt{d}$, there also exists a constant $C>0$ independent of $x$ and $z$ such that $\dfrac{1}{|x-w-z|^{d-1}} \leq C \dfrac{1}{|x-w|^{d-1}}$ for every $w \in B_{2r}(x)\setminus{B_{2\sqrt{d}}(x)}$. We deduce 
$$\int_{B_r(x-z)\setminus{B_{2\sqrt{d}}(x)}} \dfrac{|\nabla u(w+z)|}{|x-w-z|^{d-1}}dw \leq C \int_{B_{2r}(x)} \dfrac{|\nabla u(w+z)|}{|x-w|^{d-1}}dw.$$
Since $p>\frac{d}{2}$, we also have $p^*>d$ and $\dfrac{(d-1)p^*}{p^{*}-1}<d$. Using the Fubini theorem and the H\"older inequality, it therefore follows 
\begin{align*}
    \int_{Q} \int_{B_r(x-z)\setminus{B_{2\sqrt{d}}(x)}} \dfrac{|\nabla u(w+z)|}{|x-w-z|^{d-1}}dw \hspace{0.05cm} dz  & \leq C \left(\int_{B_{2r}(x)} \dfrac{1}{|x-w|^{(d-1)\frac{p^*}{p^{*}-1}}} dw \right)^{\frac{p^*-1}{p^*}} \|\nabla u\|_{\mathcal{E}^p} \\
    &= C_1  r^{1 - \frac{d}{p^*}} \|\nabla u\|_{\mathcal{E}^p},
\end{align*}
where $C_1$ is a constant that depends only on $p^*$ and the dimension $d$. We also have
\begin{align}
    \int_{Q} \int_{B_{2\sqrt{d}}(x)} \dfrac{|\nabla u(w+z)|}{|x-w-z|^{d-1}}dw \hspace{0.05cm} dz & \leq \|\nabla u\|_{L^{\infty}(\mathbb{R}^d)} \int_{Q} \int_{B_{2\sqrt{d}}} \dfrac{1}{|w+z|^{d-1}}dw \hspace{0.05cm} dz\\
    &\leq \|\nabla u\|_{L^{\infty}(\mathbb{R}^d)} \int_{B_{3\sqrt{d}}} \dfrac{1}{|w|^{d-1}}dw,
\end{align}
and, if we denote $C_2 = \max\left(C_1, \displaystyle \int_{B_{3\sqrt{d}}} \dfrac{1}{|w|^{d-1}}dw\right)$, we obtain 
\begin{equation}
\label{estime_sous_lin_ap_1}
    \int_{Q} \int_{B_r(x-z)} \dfrac{|\nabla u(w+z)|}{|x-w-z|^{d-1}}dw \hspace*{0.05cm}  dz  \leq C_2 \left( r^{1 - \frac{d}{p^*}} \|\nabla u\|_{\mathcal{E}^p} + \|\nabla u \|_{L^{\infty}(\mathbb{R}^d)}\right).
\end{equation}
We can similarly show that 
\begin{equation}
\label{estime_sous_lin_ap_2}
    \int_{Q} \int_{B_r(-z)} \dfrac{|\nabla u(w+z)|}{|w+z|^{d-1}}dw \hspace*{0.05cm} dz   \leq C_2 \left( r^{1 - \frac{d}{p^*}} \|\nabla u\|_{\mathcal{E}^p} + \|\nabla u \|_{L^{\infty}(\mathbb{R}^d)}\right).
\end{equation}
Since $\dfrac{d}{p^*}<1$, estimates \eqref{sous_lin_morrey_ap}, \eqref{estime_sous_lin_ap_1} and \eqref{estime_sous_lin_ap_2} finally show the existence of a constant $M>0$ which depends only on $u$, $p$ and $d$ such that $\dfrac{|u(x) - u(0)|}{|x|} \leq \dfrac{M}{|x|^{d/p^*}}$,
when $|x|$ is sufficiently large. We both obtain the strict sub-linearity at infinity of $u$ and estimate \eqref{estimme_ss_linearite_ap}. 
\end{proof}

We conclude this section proving that H\"older-continuous functions of $\mathcal{E}^p$ actually belong to $L^q(\mathbb{R}^d)$ for some Lebesgue exponent $q\geq p$. 

\begin{prop}
\label{prop_lebesgue_inclusion}
Let $\alpha \in ]0,1[$ and $p\geq 1$. Then the set $\left\{f \in \mathcal{C}^{0, \alpha}(\mathbb{R}^d) \ \middle| \ \mathcal{M}(|f|) \in L^p(\mathbb{R}^d) \right\}$ is a subset of $L^q(\mathbb{R}^d)$ for every $q \geq \mathbf{q} := \dfrac{p(\alpha+d)-d}{\alpha}$. In addition, the inclusion does not hold if $q<\mathbf{q}$.
\end{prop}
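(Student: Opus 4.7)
The strategy is in two steps: establish the inclusion for $q \geq \mathbf{q}$, then exhibit a counter-example for $q < \mathbf{q}$. Since $f \in L^\infty(\mathbb{R}^d)$, the interpolation $L^{\mathbf{q}} \cap L^\infty \subset L^q$ for every $q \geq \mathbf{q}$ reduces the inclusion to the endpoint $q=\mathbf{q}$. The core is a cube decomposition with $F(y) := \sup_{Q+y}|f|$, for which
\[
\int_{Q+y}|f|^q \leq F(y)^{q-1}\,\mathcal{M}(|f|)(y),
\]
so that, integrating in $y$ and using Fubini's theorem together with $|Q|=1$, Hölder's inequality with exponents $p'$ and $p$ yields
\[
\int_{\mathbb{R}^d}|f|^q \leq \|F\|_{L^{(q-1)p'}(\mathbb{R}^d)}^{q-1} \cdot \|\mathcal{M}(|f|)\|_{L^p(\mathbb{R}^d)}.
\]

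The crucial ingredient is the pointwise bound
\[
F(y)^{1+d/\alpha} \leq C\,\widetilde{\mathcal{M}}(y), \qquad \widetilde{\mathcal{M}}(y) := \sum_{k \in \{-1,0,1\}^d}\mathcal{M}(|f|)(y+k).
\]
To prove it, I would pick $x^\ast \in \overline{Q+y}$ with $|f(x^\ast)|=F(y)$ and use uniform Hölder continuity to see that $|f| \geq F(y)/2$ on the ball $B_r(x^\ast)$ with $r = (F(y)/(2[f]_\alpha))^{1/\alpha}$; when $r \leq 1$, this ball lies inside the union of the $3^d$ cubes neighboring $Q+y$, hence $\widetilde{\mathcal{M}}(y) \geq \int_{B_r(x^\ast)}|f| \geq c_d(F(y)/2)r^d$, giving the stated bound. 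The complementary regime $r>1$ (i.e.\ $F(y) > 2[f]_\alpha$) is handled by using $B_{1/2}(x^\ast)$ instead, on which $|f|\geq F(y)/2$ by Hölder continuity, yielding the linear bound $\widetilde{\mathcal{M}}(y) \geq c\, F(y)$, which absorbs into $c\,F(y)^{1+d/\alpha}$ thanks to the a priori $L^\infty$ bound on $f$. Substituting $F \leq C\,\widetilde{\mathcal{M}}^{\alpha/(\alpha+d)}$ back into the Hölder estimate, and using that $\widetilde{\mathcal{M}}$ is a sum of $3^d$ translates of $\mathcal{M}(|f|)$ (so its $L^r$-norm is controlled by that of $\mathcal{M}(|f|)$), one obtains
\[
\|F\|_{L^{(q-1)p'}}^{q-1} \leq C'\,\|\mathcal{M}(|f|)\|_{L^{(q-1)p'\alpha/(\alpha+d)}}^{(q-1)\alpha/(\alpha+d)}.
\]
Since $\mathcal{M}(|f|) \in L^p \cap L^\infty \subset L^r$ for every $r \geq p$, this is finite precisely when $(q-1)p'\alpha/(\alpha+d)\geq p$, which is equivalent to $q \geq 1+(p-1)(\alpha+d)/\alpha = \mathbf{q}$. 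In the degenerate case $p=1$ (where $\mathbf{q}=1$ and $p'=+\infty$), the same chain degenerates to $\int|f|^q \leq \|f\|_\infty^{q-1}\|\mathcal{M}(|f|)\|_{L^1}=\|f\|_\infty^{q-1}\|f\|_{L^1}$ by Fubini, finite for every $q \geq 1$.

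For the optimality when $q<\mathbf{q}$, I would construct a lacunary sum of scaled bumps. Fix a non-negative $\phi \in \mathcal{C}^{0,\alpha}(\mathbb{R}^d)$ supported in the unit ball with $\phi(0)=1$, choose well-separated points $x_n \in \mathbb{R}^d$ and a sequence $r_n \downarrow 0$, and set
\[
f(x) := \sum_{n\geq 1} r_n^{\alpha}\,\phi\!\left(\frac{x-x_n}{r_n}\right).
\]
The scaling height $r_n^\alpha$ is exactly what keeps each rescaled bump's Hölder-$\alpha$ seminorm uniformly bounded, and the separation of the $x_n$ ensures $f \in \mathcal{C}^{0,\alpha}(\mathbb{R}^d)$. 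A direct computation gives $\int|f|^q \asymp \sum_n r_n^{\alpha q + d}$ and $\|\mathcal{M}(|f|)\|_{L^p}^p \asymp \sum_n r_n^{p(\alpha+d)}$; since $q<\mathbf{q}$ is equivalent to $\alpha q + d < p(\alpha+d)$, choosing $r_n = n^{-\beta}$ with $\beta$ in the non-empty interval $\bigl(1/(p(\alpha+d)),\,1/(\alpha q + d)\bigr]$ forces $\mathcal{M}(|f|) \in L^p$ while $f\notin L^q$. The main obstacle lies in a clean derivation of the pointwise estimate $F(y)^{1+d/\alpha}\leq C\,\widetilde{\mathcal{M}}(y)$: one must juggle the boundary effects (when $x^\ast$ is near $\partial(Q+y)$) and the large-$F(y)$ regime where the Hölder ball outgrows the $3^d$-cube neighborhood. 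Once this bound is in hand, the rest reduces to bookkeeping with Hölder's inequality and the shift-invariance of Lebesgue norms.
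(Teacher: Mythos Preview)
Your proof is correct and follows a somewhat different route from the paper's. Both arguments rest on the same core observation: H\"older continuity forces $|f|\geq \tfrac12 F(y)$ on a ball of radius $\sim F(y)^{1/\alpha}$, which yields a lower bound on the local average in terms of the local sup. The paper, however, implements this via a dyadic subdivision of each unit interval (it works only in $d=1$ and leaves higher dimensions to the reader), extracts the summability $\sum_{N,k}\beta_{N,k}^{p(1+1/\alpha)}<\infty$, and then closes with a case split on whether $\beta_{N,k}^{1/\alpha}$ is below or above the dyadic scale $2^{-|N|}$. Your approach is tighter: you encode the same H\"older lower bound as a single pointwise inequality $F(y)^{1+d/\alpha}\leq C\,\widetilde{\mathcal{M}}(y)$, then combine it with the elementary bound $\int_{Q+y}|f|^q\leq F(y)^{q-1}\mathcal{M}(|f|)(y)$ and one application of H\"older. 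This avoids the dyadic machinery entirely, works directly in any dimension, and makes the role of the critical exponent $\mathbf{q}$ transparent through the algebraic identity $(\mathbf{q}-1)p'\alpha/(\alpha+d)=p$. Your bump counterexample is likewise the natural dimension-free analogue of the paper's explicit piecewise-linear one-dimensional function, and the scaling bookkeeping $\int|f|^q\asymp\sum r_n^{\alpha q+d}$ versus $\|\mathcal{M}(|f|)\|_{L^p}^p\asymp\sum r_n^{p(\alpha+d)}$ is exactly right.
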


\begin{proof}
We show the proposition in dimension $d=1$ for clarity, the proof for higher dimensions is a simple adaptation. Let $f\in \mathcal{C}^{0,\alpha}(\mathbb{R})$ such that $\mathcal{M}(|f|)\in L^p(\mathbb{R})$. For every $N \in \mathbb{Z}$ and $k\in \left\{0,...,2^{|N|}-1\right\}$, we denote $\displaystyle Q_{N,k} := \left[N+\dfrac{k}{2^{|N|}}\hspace{0.1cm} ;\hspace{0.1cm}N+\dfrac{k+1}{2^{|N|}}\right]$, $\displaystyle \beta_{N,k} := \max_{x \in Q_{N,k}} |f(x)|$ and $x_{N,k} := \underset{x \in Q_{N,k}}{\operatorname{argmax}} \ |f(x)| $. Using Corollary \ref{comportement_asymptotique_AP}, we know that $\displaystyle \sup_{k \in \left\{0,...,2^{|N|}-1\right\}} \beta_{N,k}$ converges to 0 when $N \to \infty$. Since $f \in \mathcal{C}^{0, \alpha}(\mathbb{R})$, there exists $C>0$ such that for every $N$ and every $k \in \left\{0,...,2^{|N|}-1\right\}$, we have $|f(y)|\geq \dfrac{\beta_{N,k}}{2}$ for all $y\in [x_{N,k} - C(\beta_{N,k})^{\frac{1}{\alpha}}, x_{N,k} + C(\beta_{N,k})^{\frac{1}{\alpha}}]$. For every $x\in \left[N,N + \dfrac{1}{2}\right]$, we have  $ \left[N + \dfrac{1}{2},N\right]\subset [x,x+1]$ and we deduce :
$$\int_{x}^{x+1} |f(y)|dy \geq \int^{N}_{N+\frac{1}{2}} |f(y)|dy \geq  C \sum_{k = 2^{|N|-1}}^{2^{|N|}-1} (\beta_{N,k})^{\frac{1}{\alpha}} \beta_{N,k}.$$
Therefore,  
$$\frac{C^p}{2} \sum_{N \in \mathbb{Z}} \sum_{k = 2^{|N|-1}}^{2^{|N|}-1} (\beta_{N,k})^{p\left(1+\frac{1}{\alpha}\right)} \leq \sum_{N \in \mathbb{Z}} \int_{N}^{N+\frac{1}{2}} \left| \int_x^{x+1} |f(y)|dy\right|^p dx \leq \int_{\mathbb{R}} \left| \mathcal{M}(|f|)\right|^p<+\infty.$$ 
We similarly obtain $ \displaystyle \sum_{N \in \mathbb{Z}} \sum_{k = 0}^{2^{|N|-1}} (\beta_{N,k})^{p\left(1+\frac{1}{\alpha}\right)} < + \infty$. For every $q\geq \dfrac{p(1+\alpha) - 1}{\alpha}$, we have 
$$\int_{\mathbb{R}} |f(x)|^q dx \leq \sum_{N \in \mathbb{Z}} \sum_{k = 0}^{2^{|N|}-1} \int_{Q_{N,k}} (\beta_{N,k})^q = \sum_{N \in \mathbb{Z}} \sum_{k = 0}^{2^{|N|}-1} \dfrac{1}{2^{|N|}} (\beta_{N,k})^q.$$
If we distinguish the two cases $(\beta_{N,k})^{\frac{1}{\alpha}} \leq \dfrac{1}{2^{|N|}}$ and  $(\beta_{N,k})^{\frac{1}{\alpha}} \geq \dfrac{1}{2^{|N|}}$, we obtain the following bound : 
$$\int_{\mathbb{R}^d} |f(x)|^q dx \leq \sum_{N \in \mathbb{Z}} \sum_{k = 0}^{2^{|N|}-1} \left(\dfrac{1}{2^{(q \alpha+1) |N|}} + (\beta_{N,k})^{q + \frac{1}{\alpha}}\right) = \sum_{N \in \mathbb{Z}} \left(\dfrac{1}{2^{q \alpha |N|}} + \sum_{k = 0}^{2^{|N|}-1} (\beta_{N,k})^{q + \frac{1}{\alpha}}\right). $$
Since $q+ \dfrac{1}{\alpha} \geq p\left(1+\dfrac{1}{\alpha}\right)$, we conclude that $\displaystyle \int_{\mathbb{R}^d} |f|^q < +\infty$. 

In order to show that the inclusion does not hold when $q< \dfrac{p(1+\alpha) -1}{\alpha}$, we denote $a_{k} = \dfrac{1}{\ln(k)\sqrt{k}}$ for $k\in \mathbb{N}\setminus{\{1,2\}}$ and consider the function $$\displaystyle f(x) = \sum_{k\geq 2} \left( -\dfrac{\sqrt{k}}{k^{\frac{\alpha}{2}}} |x-k|+\dfrac{1}{\ln(k)k^{\frac{\alpha}{2}}}\right)1_{\left[-a_k,  a_k\right]}(x-k),$$
where $1_{A}$ denotes the characteristic function of a subset $A$. We can easily show that $f \in \mathcal{C}^{0,\alpha}(\mathbb{R})$ and $\mathcal{M}(|f|)$ belongs to
$L^p(\mathbb{R})$ for $p = \dfrac{2}{\alpha +1}$ whereas $f \notin L^q(\mathbb{R})$ if $q<\dfrac{1}{\alpha} = \dfrac{p(1+\alpha) - 1}{\alpha}$. 
\end{proof}

\begin{remark}
Without assumption of H\"older continuity, we have the existence of functions $f$ such that $\mathcal{M}(|f|) \in L^p(\mathbb{R}^d)$ for $p>1$ whereas $f$ does not belong to any $L^q(\mathbb{R}^d)$. Consider indeed, for $d=1$,
$$ \displaystyle f(x) = \sum_{k \geq 2} \left( \dfrac{1}{\ln(k)} - \dfrac{\sqrt{k}}{\ln(k)}|x-k|\right) 1_{\left[-\frac{1}{\sqrt{k}}, -\frac{1}{\sqrt{k}}\right]}(x-k).$$
This function, composed of a sum of "bumps" centered at the integers $k\geq 2$, does not belong to any $L^q(\mathbb{R})$ for $q\geq 1$ due to its logarithmic decrease at infinity. However, a simple calculation allows to show that  $\mathcal{M}(|f|)$ belongs to $L^p(\mathbb{R}^d)$, for every $p>2$, because of a classical regularizing property of the local averaging. 
\end{remark}

\subsection{Discrete variant of the Gagliardo-Nirenberg-Sobolev inequality}

In this section we show that, still under the assumption $p<d$, it is possible to obtain a bound on the local average $\mathcal{M}(|f|)$ of a function $f \in \mathcal{A}^p$ using bounds on its discrete derivative $\delta f$. To this end, we establish a discrete variant of the Gagliardo-Nirenberg-Sobolev inequality (see for instance \cite[Section 5.6.1]{evans10} for the classical inequality) adapting its proof in our discrete setting. We begin by showing the result for the functions of $L^p(\mathbb{R}^d)$ in Proposition \ref{Corollaire_GNS_LP} below. Our aim will be next to extend this result to $\mathcal{A}^p$ arguing by density.

\begin{prop}
\label{Corollaire_GNS_LP}
There exists a constant $C>0$ such that for every $f \in L^p(\mathbb{R}^d)$, we have : 
\begin{equation}
\label{GNS_Lp}
    \left\| \mathcal{M}\left(\left|f\right| \right) \right \|_{L^{p^*}(\mathbb{R}^d)} \leq C \| \delta f \|_{L^p(\mathbb{R}^d)}.
\end{equation}
\end{prop}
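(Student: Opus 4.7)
My plan is to reduce \eqref{GNS_Lp} to the classical continuous Gagliardo-Nirenberg-Sobolev (GNS) inequality applied to the smoothed function $u := \mathcal{M}(|f|)$. The key observation is that $u$ can be written as the convolution $u = |f| \ast 1_{-Q}$ with $-Q = \,]\!-\!1,0[^d$, and this averaging makes $u$ land in $W^{1,p}(\mathbb{R}^d)$ with weak gradient controlled by the discrete gradient $\delta f$, even though $f$ itself is merely in $L^p$.

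Concretely, I would first observe that $u \in L^p(\mathbb{R}^d)$ with $\|u\|_{L^p} \leq \|f\|_{L^p}$ by Young's convolution inequality, since $\|1_{-Q}\|_{L^1} = 1$. I would then show that for each $i \in \{1,\dots,d\}$ the weak partial derivative of $u$ is
\[
\partial_{z_i} u(z) \;=\; \int_{Q' + z'} \bigl[\, |f(x', z_i + 1)| - |f(x', z_i)| \,\bigr]\, dx',
\]
where $x' \in \mathbb{R}^{d-1}$ stands for the coordinates $x_j$ for $j \neq i$ and $Q' = \,]0,1[^{d-1}$. This can be checked either by Lebesgue's differentiation theorem applied to the iterated integral defining $u$, or by integration against a test function $\varphi \in \mathcal{C}^{\infty}_c(\mathbb{R}^d)$. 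Bounding the integrand pointwise by $|\delta_i f(x', z_i)|$ (using $\bigl||a|-|b|\bigr| \leq |a-b|$) and then applying Jensen's inequality on the unit-volume set $Q' + z'$ gives
\[
|\partial_{z_i} u(z)|^p \;\leq\; \int_{Q' + z'} |\delta_i f(x', z_i)|^p\, dx'.
\]
Integrating in $z$ and applying Fubini (with $|Q'|=1$) produces $\|\partial_{z_i} u\|_{L^p} \leq \|\delta_i f\|_{L^p}$, so that $u \in W^{1,p}(\mathbb{R}^d)$ with $\|\nabla u\|_{L^p} \leq C_d \|\delta f\|_{L^p}$ for some dimensional constant $C_d$.

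Since $1 \leq p < d$, the classical GNS inequality on $\mathbb{R}^d$ (extended from $\mathcal{C}^{\infty}_c$ to $W^{1,p}$ by density) now applies to $u$ and delivers $\|u\|_{L^{p^*}} \leq C(p,d) \|\nabla u\|_{L^p}$; combined with the gradient bound from the previous step, this is exactly \eqref{GNS_Lp}. The most delicate point I anticipate is the rigorous justification of the weak-derivative formula for a general $f \in L^p$ which need not be continuous or differentiable; it should nonetheless be handled by a direct Lebesgue differentiation argument on $t \mapsto \int_{Q'+z'} |f(x',t)|\,dx'$, or as a fallback by first establishing the inequality for $f \in \mathcal{C}^{\infty}_c(\mathbb{R}^d)$ and extending by density.
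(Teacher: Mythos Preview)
Your proposal is correct and follows essentially the same route as the paper: compute $\partial_i \mathcal{M}(|f|)$ as an average over the transverse face of $Q$ of $\delta_i|f|$, bound it pointwise by the corresponding average of $|\delta_i f|$, deduce $\|\nabla \mathcal{M}(|f|)\|_{L^p}\le \|\delta f\|_{L^p}$, and apply the classical Gagliardo--Nirenberg--Sobolev inequality. The only organizational difference is that the paper takes your ``fallback'' as its primary argument---it first treats $f\in\mathcal{C}^0_c(\mathbb{R}^d)$, where the derivative formula is immediate and $\mathcal{M}(|f|)\in\mathcal{C}^1_c$, and then passes to general $f\in L^p$ by density (using that $\mathcal{E}^p$ is complete to handle the limit)---whereas you aim to work directly with weak derivatives for $f\in L^p$ via the convolution identity $\mathcal{M}(|f|)=|f|\ast 1_{-Q}$.
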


\begin{proof}
\textbf{Step 1}. We first prove the result for the space of continuous compactly supported functions, denoted by $\mathcal{C}_c^0(\mathbb{R}^d)$ in the sequel. Let $f \in \mathcal{C}_c^0(\mathbb{R}^d)$. For every $i \in \{1,...,d \}$, we remark that : 
$$ \partial_i \mathcal{M}\left(\left|f\right| \right)(x) = \int_{Q_i + \Tilde{x}_i} \delta_i |f(y_1,...,y_{i-1},x_i,y_{i+1},...,y_d)| d\Tilde{y}_i,$$
where $\displaystyle Q_i = \prod_{j \in \{1,...,d\}\setminus{\{i\}}} ]0,1[$ and $\Tilde{x}_i = (x_1,..,x_{i-1}, x_{i+1},...,x_d)$. In addition, using a triangle inequality, we have
$|\delta_i |f(y)|| \leq |\delta_i f(y)|$ for every $i$.
We next denote by $p'$ the conjugate exponent associated with~$p$. Successively using the Hölder inequality, a change of variable and the Fubini theorem, we obtain : 
\begin{align*}
    \int_{\mathbb{R}^d} |\partial_i \mathcal{M}\left(\left|f\right| \right)(x)|^p dx  &  \leq  |Q_i + \Tilde{x_i}|^{p/p'} \int_{\mathbb{R}^d}  \int_{Q_i + \Tilde{x}_i} |\delta_i f(y_1,...,y_{i-1},x_i,y_{i+1},...,y_d)|^p d\Tilde{y}_i dx \\
     & = \int_{\mathbb{R}^d}  \int_{Q_i} |\delta_i f(z_1+x_1,...,z_{i-1}+x_{i-1},x_i,z_{i+1}+x_{i+1},...,z_d+x_d)|^p d\Tilde{z}_i dx \\
     & =  |Q_i| \int_{\mathbb{R}^d}  |\delta_i f(x)|^p dx =  \|\delta_i f\|_{L^p(\mathbb{R}^d)}^p.
\end{align*}
We therefore deduce $\|\nabla \mathcal{M}\left(\left|f\right| \right)\|_{L^p(\mathbb{R}^d)} \leq \| \delta f \|_{L^p(\mathbb{R}^d)}$.
Since $f\in \mathcal{C}^{0}_c(\mathbb{R}^d)$, we have $\mathcal{M}\left(\left|f\right| \right) \in \mathcal{C}^{1}_c(\mathbb{R}^d)$. The classical Gagliardo-Nirenberg-Sobolev inequality allows to conclude to the existence of a constant $C>0$ independent of $f$ such that
$$ \left\| \mathcal{M}\left(\left|f\right| \right) \right \|_{L^{p^*}(\mathbb{R}^d)} \leq C \| \delta f \|_{L^p(\mathbb{R}^d)}.$$  

\textbf{Step 2}. Now that the case of compactly supported function has been dealt with, we can generalize for $L^p(\mathbb{R}^d)$ using a density result. Let $f \in L^p(\mathbb{R}^d)$ and $(f_n)_{n \in \mathbb{N}}$ a sequence of $\mathcal{C}^0_c(\mathbb{R}^d)$ that converges to $f$ in $L^p(\mathbb{R}^d)$. We can easily show that $\delta f_n$ converges to $\delta f$ in this space. In the first step, we have shown the existence of $C>0$ such that for every $n,m \in \mathbb{N}$, we have
\begin{equation}
\label{inégalité_GNS_suite}
    \left\| f_n \right \|_{\mathcal{E}^p} \leq C \| \delta f_n \|_{L^p(\mathbb{R}^d)},
\end{equation}
\begin{equation}
\label{inégalité_GNS_suite2}
    \left\| f_n - f_m \right \|_{\mathcal{E}^p} \leq C \| \delta f_n - \delta f_m\|_{L^p(\mathbb{R}^d)}.
\end{equation}
Since $(\delta f_n)$ converges in $\left(L^p(\mathbb{R}^d)\right)^d$, it is a Cauchy sequence in this space and, using \eqref{inégalité_GNS_suite2}, we deduce that $f_n$ is a Cauchy sequence in the Banach space $\mathcal{E}^p$ and $f_n$ therefore converges to $f$ in this space. We finally take the limit in \eqref{inégalité_GNS_suite} when $n \to \infty$ and we obtain \eqref{GNS_Lp}.
\end{proof}

We next prove a discrete version of Schwarz lemma for the functions of $L^1_{loc}(\mathbb{R}^d)$ (see \cite[Theorem VI, p.59]{schwartz1966theorie} for the classical version). More precisely, we show that if a vectored-valued function $T$ satisfies some discrete Cauchy equations in the sense of \eqref{Cauchy_discret}, then there exists a function $u \in L^1_{loc}$ such that $T$ is the discrete gradient of $u$. In the sequel of this section we shall use this result in order to establish some density properties in $\mathcal{A}^p$. 

\begin{prop}
\label{deRham}
Let $T \in \left(L^1_{loc}(\mathbb{R}^d)\right)^d$ such that for every $i,j\in\{1,...,d\}$, we have : 
\begin{equation}
\label{Cauchy_discret}
    \delta_j T_i = \delta_i T_j.
\end{equation}
Then, there exists $u \in L^1_{loc}(\mathbb{R}^d)$ such that $T = \delta u$. 
\end{prop}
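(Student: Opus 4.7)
The plan is to construct $u$ explicitly by ``integrating'' $T$ along discrete lattice paths in $\mathbb{R}^d$, using the compatibility condition \eqref{Cauchy_discret} to ensure the construction is unambiguous.

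First I would fix an arbitrary reference function on the unit cube, say $u = 0$ on $Q$. For $y \in Q$ and $k = (n_1, \dots, n_d) \in \mathbb{Z}^d$ with every $n_i \geq 0$ (the general case being obtained by replacing a forward step $y \mapsto y+e_i$ contributing $T_i(y)$ by a backward step $y \mapsto y-e_i$ contributing $-T_i(y-e_i)$), I set
\begin{equation*}
    u(y+k) := \sum_{i=1}^{d} \sum_{j=0}^{n_i - 1} T_i\bigl(y + n_1 e_1 + \dots + n_{i-1} e_{i-1} + j e_i\bigr).
\end{equation*}
On each cube $Q+k$ the function $u$ is thus a finite sum of translates of components of $T$, so that $u \in L^1(Q+k)$ for every $k$, and therefore $u \in L^1_{loc}(\mathbb{R}^d)$.

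The core of the argument is to check that $T = \delta u$, namely $u(x+e_i) - u(x) = T_i(x)$ for a.e. $x$ and every $i$. For $i = d$ this is immediate from the construction, since the last block of the lexicographic path moves in direction $e_d$. For $i < d$, the path from $y$ to $y+k+e_i$ is not obtained by appending a single $e_i$-step at the end of the path to $y+k$; writing out $u(y+k+e_i) - u(y+k)$ produces, in addition to the expected $T_i(y + n_1 e_1 + \dots + n_{i-1} e_{i-1})$, a telescopic sum of increments of the form $\delta_i T_\ell$ for $\ell > i$. Using the compatibility condition \eqref{Cauchy_discret} to rewrite each $\delta_i T_\ell$ as $\delta_\ell T_i$, the sum collapses telescopically in the variable $x_\ell$ and yields precisely $T_i(y+k)$. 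The main obstacle is this bookkeeping: one must unfold the telescopes in the correct order, layer by layer for $\ell = i+1, i+2, \dots, d$, each layer relying on \eqref{Cauchy_discret} to swap a discrete derivative in direction $i$ against one in a higher direction.

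Conceptually, the same fact can be phrased as path-independence: the definition of $u(y+k)$ is insensitive to the chosen lattice path from $y$ to $y+k$, because any closed loop in $\mathbb{Z}^d$ is a sum of unit plaquettes with vertices $y, y+e_i, y+e_i+e_j, y+e_j$, and the signed contribution of such a plaquette is $T_i(y) + T_j(y+e_i) - T_i(y+e_j) - T_j(y) = \delta_i T_j(y) - \delta_j T_i(y) = 0$. Either viewpoint gives the conclusion $T = \delta u$, and the proposition follows.
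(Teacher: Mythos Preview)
Your proof is correct and follows essentially the same route as the paper: an explicit construction of $u$ by discrete path-integration from the base cube $Q$, followed by verification that $\delta_i u = T_i$ is immediate in the last direction of the chosen path and requires a telescoping argument via \eqref{Cauchy_discret} in the remaining directions. The paper carries this out only for $d=2$ (with the path order $e_2$ then $e_1$, so that $\delta_1 u = T_1$ is the trivial identity), whereas you treat general $d$ with the lexicographic order and add the plaquette/path-independence interpretation, which is a pleasant conceptual bonus but not a genuinely different argument.
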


\begin{proof}
For clarity, we only show this result in the case $d=2$, the proof for higher dimensions is similar. In the sequel, for $y \in \mathbb{R}$, we denote by $[y]\in \mathbb{Z}$ the integer part of $y$ and by $\{ y \} \in [0,1[$ its fractional part. We begin by considering $u$ defined by : 
\begin{equation}
    u(x) = \sum_{n = 0}^{[x_1]-1} T_1\left(\left(\{ x_1\}, x_2\right) + n e_1\right) + \sum_{m = 0}^{[x_2]-1} T_2\left(\left(\{ x_1\}, \{ x_2\}\right) + m e_2\right),
\end{equation}
for almost all $x = (x_1, x_2) \in \mathbb{R}^2$. Since $T_i$ belongs to $L^1_{loc}(\mathbb{R}^d)$ for every $i\in \{1,2\}$, we clearly have $u \in L^1_{loc}(\mathbb{R}^d)$. We next show that $\delta u = T$. Indeed, we have
\begin{align*}
    \delta_1 u(x) = u(x+e_1) - u(x)
    & = \sum_{n = 0}^{[x_1]} T_1\left(\left(\{ x_1\}, x_2\right) + n e_1\right) - \sum_{n = 0}^{[x_1]-1} T_1\left(\left(\{ x_1\}, x_2\right) + n e_1\right) \\
    & = T_1\left(\left(\{ x_1\}, x_2\right) + [x_1] e_1\right) = T_1(x).
\end{align*}
We can similarly show that : 
\begin{align*}
    \delta_2 u(x) = \sum_{n = 0}^{[x_1]-1} \delta_2 T_1\left(\left(\{ x_1\}, x_2\right) + n e_1 \right) + T_2\left(\left(\{ x_1\}, x_2\right)\right).
\end{align*}
Since $\delta_2 T_1 = \delta_1 T_2$, we deduce :
$$\sum_{n = 0}^{[x_1]-1} \delta_2 T_1\left(\left(\{ x_1\}, x_2\right) + n e_1 \right) = \sum_{n = 0}^{[x_1]-1} \delta_1 T_2\left(\left(\{ x_1\}, x_2\right) + n e_1 \right) = T_2(x) - T_2\left(\left(\{ x_1\}, x_2\right)\right).$$
We finally obtain that $\delta_2 u(x) = T_2(x)$, and we can conclude that $T = \delta u$. 
\end{proof}

In the sequel, for every $R>0$, we denote
$\displaystyle Q_R = \left\{ x \in \mathbb{R}^d \ \middle| \ \max_{i \in \{1,...,d\}} |x_i| < R \right\}$.
The next lemma is a discrete version of a particular case of the Poincaré-Wirtinger inequality (see for example \cite[Section 5.8.1]{evans10} for the classical version). This result is a technical tool that will allow us to establish our discrete Gagliardo-Nirenberg-Sobolev inequality stated in Proposition~\ref{decomposition_fonctions_A_p}.


\begin{lemme}
\label{Poincare_Wirtinger_discret}
Assume $d\geq 2$. Let $p\in [1,+\infty[$ and $f$ be in $ \mathbf{A}^p$, the set defined by \eqref{circ_AP}. For every $N\in \mathbb{N}^*$, we denote $A_N = Q_{2N} \setminus{Q_{N}}$ and
$\mathcal{I}_N = \left\{ k \in \mathbb{Z}^d \ \middle| \ Q+k \subset A_N \right\}$.
We consider
\begin{equation}
\label{moyenne_periodique_f}
   \displaystyle f_{per,N}(x) = \frac{1}{\sharp \mathcal{I}_N} \sum_{k \in \mathcal{I}_N} f(x + k) \quad \text{for } x\in Q, 
\end{equation}
(where $\sharp B$ denotes the cardinality of a discrete set $B$) which we extend by periodicity. Then there exists a constant $C>0$ independent of $N$ and $f$ such that : 
\begin{equation}
    \|f-f_{per,N}\|_{L^p(A_N)} \leq C N \|\delta f\|_{L^p(Q_{6N}\setminus{Q_N})}.
\end{equation}
\end{lemme}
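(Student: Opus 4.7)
The plan is to prove this by the classical path method for discrete Poincar\'e--Wirtinger inequalities. The starting point is the decomposition $A_N = \bigcup_{k \in \mathcal{I}_N}(Q+k)$ (up to measure zero). For $x = y+k$ with $y \in Q$ and $k \in \mathcal{I}_N$, periodicity of $f_{per,N}$ gives
\begin{equation*}
f(x) - f_{per,N}(x) = \frac{1}{\sharp \mathcal{I}_N} \sum_{k' \in \mathcal{I}_N} \bigl(f(y+k) - f(y+k')\bigr),
\end{equation*}
and Jensen's inequality yields $|f(y+k) - f_{per,N}(y)|^p \leq \frac{1}{\sharp \mathcal{I}_N}\sum_{k' \in \mathcal{I}_N} |f(y+k) - f(y+k')|^p$.

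For each pair $(k, k') \in \mathcal{I}_N^2$ I would construct a discrete path $\gamma_{k,k'} = (k = k_0, k_1, \ldots, k_m = k')$ with $k_{i+1} - k_i = \pm e_{\sigma_i}$, of length $m = m_{k,k'} \leq CN$, lying in an enlargement of $\mathcal{I}_N$ contained in $\{\ell \in \mathbb{Z}^d : Q+\ell \subset Q_{6N}\setminus Q_N\}$. The natural choice is the coordinate-by-coordinate \emph{staircase} path, modified by a short local detour whenever it would enter the inner hole $Q_N$; since $d \geq 2$ and the annulus $\mathcal{I}_N$ is thick (of width comparable to $N$), such detours remain inside the enlargement and add $O(N)$ steps. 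Telescoping the identity
\begin{equation*}
f(y+k') - f(y+k) = \sum_{i=0}^{m-1} \pm\, \delta_{\sigma_i} f(y + \ell_i), \qquad \ell_i \in \{k_i, k_{i+1}\},
\end{equation*}
and applying Jensen once more gives $|f(y+k) - f(y+k')|^p \leq (CN)^{p-1}\sum_i |\delta f(y+\ell_i)|^p$.

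The decisive combinatorial step is to show that each lattice edge in the enlargement is traversed by at most $CN^{d+1}$ of the paths $\gamma_{k,k'}$. For a pure staircase path this is verified by hand: an edge $(k_0, k_0 + e_j)$ is used during phase $j$ only when $k'_i = k_0^i$ for $i<j$, $k_i = k_0^i$ for $i>j$, and $k_j \leq k_0^j < k'_j$, leaving exactly $O(N^{d+1})$ admissible pairs $(k,k')$. The local detours around $Q_N$ can be arranged to inherit the same bound. Combining the three steps, summing over $k' \in \mathcal{I}_N$, using $\sharp \mathcal{I}_N \geq c N^d$, and integrating over $y\in Q$ produces
\begin{equation*}
\int_{A_N} |f - f_{per,N}|^p \leq \frac{(CN)^{p-1} \cdot CN^{d+1}}{\sharp \mathcal{I}_N} \int_{Q_{6N}\setminus Q_N} |\delta f|^p \leq C N^p \|\delta f\|_{L^p(Q_{6N}\setminus Q_N)}^p,
\end{equation*}
and the claim follows by taking $p$-th roots.

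The main obstacle I expect is the combinatorial bookkeeping around the inner hole $Q_N$: a naive staircase path between points on opposite sides of $Q_N$ would cross it, so one must simultaneously (i) devise explicit detours confined to $Q_{6N}\setminus Q_N$ and (ii) verify that they do not spoil the $O(N^{d+1})$ edge-usage bound, which is essential to recover exactly the factor $N^p$ on the right-hand side rather than a worse power of $N$.
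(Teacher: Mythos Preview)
Your outline is correct and follows the same overall strategy as the paper: write $f(y+k)-f(y+k')$ as a telescoping sum of $\pm\delta_j f$ along a lattice path of length $O(N)$ inside $Q_{6N}\setminus Q_N$, apply H\"older/Jensen to pick up the factor $N^{p-1}$, and then control the double sum over $(k,k')$. The differences are in the two places you singled out. First, the paper does not use a generic staircase with detours; it makes a short case analysis (in $d=2$: either $k$ and $q$ share a ``large'' coordinate, or they do not) and in each case routes the path through a fixed outer strip, e.g.\ $q\to(q_1,2N-1)\to(k_1,2N-1)\to k$. These $L$- or $\Pi$-shaped paths stay in $Q_{6N}\setminus Q_N$ by construction, so the detour problem you flag simply does not arise. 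Second, the paper does not count edge-usage. Because each leg of its path depends on only \emph{one} of the two endpoints, one of the sums $\sum_q$, $\sum_k$ decouples and cancels against $1/\sharp\mathcal I_N$; the remaining sum is over $O(N)$ translates of disjoint unit cells and is bounded directly by $\int_{Q_{6N}\setminus Q_N}|\delta f|^p$, giving the factor $N^p$ without any $O(N^{d+1})$ combinatorics. Your edge-usage argument is the more portable, dimension-uniform version of the same bookkeeping and yields the same estimate, but the paper's explicit corner-routing is what cleanly sidesteps the obstacle you identified.
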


\begin{proof}
For clarity, we show the result in the case $d=2$, the proof in higher dimensions is similar. 
First of all, we remark that for every $k\in \mathcal{I}_N$, we have 
$N\leq \max(|k_1|, |k_2|)<2N$.
We next estimate the $L^p$-norm of $f - f_{per,N}$ : \begin{align*}
   \|f - f_{per,N}\|_{L^p(A_N)}^p &= \sum_{q\in \mathcal{I}_N} \int_{Q+q} |f - f_{per,N}|^p 
    = \sum_{q\in \mathcal{I}_N} \int_{Q} \left| \frac{1}{\sharp \mathcal{I}_N}\sum_{k\in \mathcal{I}_N} f(x+q) - f(x+k)\right|^p dx.
\end{align*}
Using the Holder inequality, we therefore obtain : 
\begin{equation}
\label{inegalité_GNS_norm_ap}
   \|f - f_{per,N}\|_{L^p(A_N)}^p \leq \frac{1}{\sharp \mathcal{I}_N}\sum_{q\in \mathcal{I}_N}\sum_{k\in \mathcal{I}_N} \int_{Q} \left|f(x+q) - f(x+k)\right|^p dx. 
\end{equation}

\begin{figure}[h!]
    \centering
    \includegraphics[width = 6.5cm]{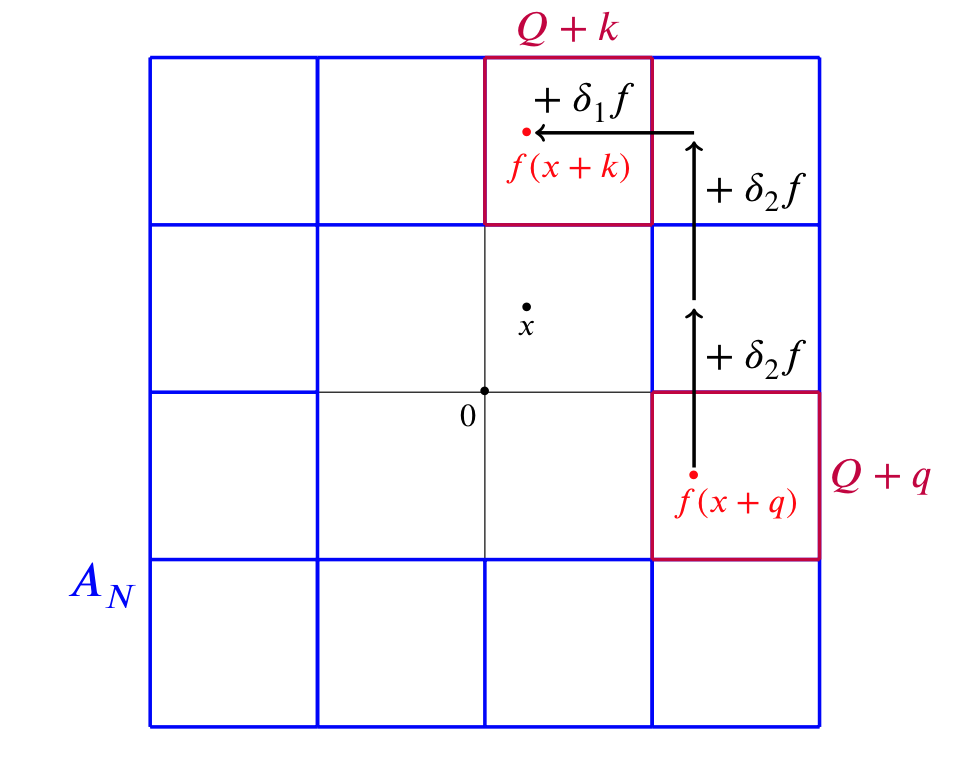}
    \caption{Illustration of the relation between $f(x+q)$, $\delta_i f$ and $f(x+k)$ in dimension $d=2$.}
    \label{fig:illustration_preuve_GNS}
\end{figure}

Our aim is now to study $|f(x+q) - f(x+k)|$ for every $q,k \in \mathcal{I}_N$ and $x \in Q$ splitting $f(x+q)-f(x+k)$ as a sum of several translations of the functions $\delta_i f$, where $i \in \{1,2\}$. Our approach, which is illustrated in figure \ref{fig:illustration_preuve_GNS}, consists in considering a discrete path of $A_N$  connecting $x+k$ and $x+q$ and, next, in iterating the relation~: 
\begin{equation}
\label{argument_it_poincareap}
  f(x+ne_j) = f(x) + \sum_{m=0}^{n-1} \delta_j f(x+ me_j), \quad \forall n\in \mathbb{N}.  
\end{equation}

In the sequel, we use the convention $\displaystyle \sum_{m=0}^{n} := \sum_{m=n}^0$ when $n\leq 0$. We consider two different cases. 

\textbf{Case 1 : There exists $i \in \{1,2\}$ such that $|k_i|\geq N$ and $|q_i|\geq N$}. Without loss of generality, we can assume that $i=1$. For every $x \in Q$,  using \eqref{argument_it_poincareap} for $j=2$ we have : 
\begin{equation}
    \label{equality_poincare_ap_1}
    f(x+q) =  f(x + (q_1,2N-1)) - \sum_{l = 0}^{2N - 2 - q_2} \delta_2 f(x + q + le_2).
\end{equation}
We can again iterate \eqref{argument_it_poincareap} and we have : 
\begin{align}
       \label{equality_poincare_ap_2}
  f\left(x + (q_1,2N-1)\right) & =  f\left(x+(k_1,2N-1)\right) - \sum_{m =0}^{k_1 - q_1-1} \delta_1 f(x+(q_1,2N-1) + m e_1),\\
  \label{equality_poincare_ap_3}
    f(x + (k_1,2N-1)) & = f(x+k) - \sum_{n = 0}^{k_2 - 2N} \delta_2 f(x + (k_1,2N-1) + ne_2).
\end{align}
Using \eqref{equality_poincare_ap_1}, \eqref{equality_poincare_ap_2} and \eqref{equality_poincare_ap_3}, we therefore obtain : 
\begin{align*}
    f(x+k) - f(x+q) & = \sum_{l = 0}^{2N - 2 - q_2} \delta_2 f(x + q + le_2)+ \sum_{m =0}^{k_1 - q_1-1} \delta_1 f(x+(q_1,2N-1) + m e_1) \\
    & + \sum_{n = 0}^{k_2 - 2N} \delta_2 f(x + (k_1,2N-1) + ne_2)
\end{align*}
Since each sum in the right-hand side of the above equality contains at most $4N$ terms, the H\"older inequality implies that  : 
\begingroup
\begin{align*}
    |f(x+q) - f(x+k)|^p & \leq CN^{p-1}\left( \sum_{l = 0}^{2N -2 - q_2} |\delta_2 f(x + q + le_2)|^p 
     + \sum_{m =0}^{k_1 - q_1-1} |\delta_1 f(x+(q_1,2N-1) + m e_1)|^p \right) \\
    & + C N^{p-1} \sum_{n = 0}^{k_2 - 2N} |\delta_2 f(x + (k_1,2N-1) + ne_2)|^p,
\end{align*}
\endgroup
where $C>0$ depends only on $d$ and $p$. 
We now insert our last inequality in \eqref{inegalité_GNS_norm_ap},  for every $|k_1|>N$ and $|q_1|>N$. We consequently have to study the following three sums : 
\begin{align*}
   S_1 & : =  \frac{1}{\sharp \mathcal{I}_N}\sum_{q\in \mathcal{I}_N, \ |q_1|\geq N}\sum_{k\in \mathcal{I}_N, \ |k_1|\geq N} \int_{Q} CN^{p-1} \sum_{l = 0}^{2N-2 - q_2} |\delta_2 f(x + q + le_2)|^p dx,
\end{align*}
\begin{equation}
    S_2 := \frac{1}{\sharp \mathcal{I}_N}\sum_{q\in \mathcal{I}_N, \ |q_1|\geq N}\sum_{k\in \mathcal{I}_N, \ |k_1|\geq N} \int_{Q} CN^{p-1} \sum_{m =0}^{k_1 - q_1-1} |\delta_1 f(x+(q_1,2N-1) + m e_1)|^p dx,
\end{equation}
\begin{equation}
    S_3 := \frac{1}{\sharp \mathcal{I}_N}\sum_{q\in \mathcal{I}_N, \ |q_1|\geq N}\sum_{k\in \mathcal{I}_N, \ |k_1|\geq N} \int_{Q} CN^{p-1} \sum_{n = 0}^{k_2 - 2N} |\delta_2 f(x + (k_1,2N-1) + n e_2)|^p dx.
\end{equation}
Here we only study $S_1$, the method to estimate $S_2$ and $S_3$ being extremely similar. 
Since $|q_2| \leq 2N-1$, we have 
\begin{equation}
\label{majoraition_S1_ap_1}
    \begin{array}{cc}
       S_1   &  \displaystyle \leq  \frac{1}{\sharp \mathcal{I}_N}\sum_{q\in \mathcal{I}_N, \ |q_1|\geq N}\sum_{k\in \mathcal{I}_N, \ |k_1|\geq N} \int_{Q} CN^{p-1} \sum_{l = -4N}^{4N} |\delta_2 f(x + q + le_2)|^p dx \\
     \vspace{-0.3cm}  & \\ 
         &  \displaystyle = \frac{CN^{p-1}}{\sharp \mathcal{I}_N} \sum_{l = -4N}^{4N}   \sum_{k\in \mathcal{I}_N, \ |k_1|\geq N} \sum_{q\in \mathcal{I}_N, \ |q_1|\geq N} \int_{Q+ q + le_2}  |\delta_2 f(x )|^p dx. \hspace{1cm}
    \end{array}
\end{equation}

In addition, for every $q\in \mathcal{I_N}$ such that $|q_1|\geq N$ and every $l \in \{-4N,...,4N\}$, we have that
$\displaystyle N\leq~\max_{i\in \{1,2\}} |(q+le_2)_i|<6N$,
that is : 
\begin{equation}
\label{majoration_S1_ap_2}
  \sum_{q\in \mathcal{I}_N, \ |q_1|\geq N} \int_{Q+ q + l e_2}  |\delta_2 f(x )|^p dx \leq \int_{Q_{6N}\setminus{Q_ N}} |\delta_2 f|^p.  
\end{equation}
Since $\sharp I_N \sim K N^2$, where $K>0$ is a constant independent of $N$, the sum 
$\displaystyle \dfrac{1}{\sharp \mathcal{I}_N} \sum_{l = -4N}^{4N}   \sum_{k\in \mathcal{I}_N, |k_1|\geq N} 1$ is a $O(N)$.
Using \eqref{majoraition_S1_ap_1} and \eqref{majoration_S1_ap_2}, we obtain the existence of a constant $\Tilde{C}>0$ independent of $N$ such that : 
$$ S_1 \leq \Tilde{C}N^p \int_{Q_{6N} \setminus{Q_N}} |\delta_2 f|^p.$$
With exactly the same method, it is possible to establish similar bounds for the sums $S_2$ and $S_3$, which show that
\begin{equation}
\label{inegalite_poincare_ap_1}
   \frac{1}{\sharp \mathcal{I}_N}\sum_{q\in \mathcal{I}_N, \ |q_1|\geq N}\sum_{k\in \mathcal{I}_N, \ |k_1| \geq N} \int_{Q} \left|f(x+q) - f(x+k)\right|^p dx \leq 3 \Tilde{C}N^p  \int_{Q_{6N} \setminus{Q_N}} \sum_{i\in\{1,2\}}|\delta_i f|^p. 
\end{equation}

\textbf{Case 2 : There exists $i,j\in \{1,2\}$, $i\neq j$, such that $|q_i|\geq N$ and $|k_j| \geq N$}. We assume that $i=1$ and $j=2$, the idea being identical if $i=2$ and $j=1$. Proceeding exactly as in the first case, it is possible to show that 
\begin{equation}
 |f(x+q) - f(x+k)|^p   \leq CN^{p-1} \left(\sum_{l = 0}^{k_2 - q_2-1} |\delta_2 f(x + q + le_2)|^p  +\sum_{m = 0}^{k_1 - q_1-1} |\delta_2 f(x + (q_1,k_2) + m e_1)|^p\right).
\end{equation}
Since $|q_1|\geq N$, et $|k_2| \geq N$, we remark that each point of the form $x + q + le_2$ or $x + (q_1,k_2)+me_1$ belongs to $A_N$. Some estimates similar to those established in the first case allow to obtain 
\begin{equation}
\label{inegalité_poincare_ap_2}
    \frac{1}{\sharp \mathcal{I}_N}\sum_{q\in \mathcal{I}_N, \ |q_1|\geq N}\sum_{k\in \mathcal{I}_N, \ |k_2| \geq N} \int_{Q} \left|f(x+q) - f(x+k)\right|^p dx \leq \Tilde{C}N^p \sum_{i\in \{1,2\}} \int_{Q_{6N} \setminus{Q_N}} |\delta_i f|^p.
\end{equation}
Using finally \eqref{inegalité_GNS_norm_ap} and inequalities \eqref{inegalite_poincare_ap_1}, \eqref{inegalité_poincare_ap_2} established in the two different cases, we have 
\begin{align*}
   \|f-f_{per,N}\|_{L^p(A_N)}^p & \leq \sum_{i,j \in \{1,2\}} \frac{1}{\sharp \mathcal{I}_N}\sum_{q\in \mathcal{I}_N, \ |q_i| \geq N}\sum_{k\in \mathcal{I}_N, \ |k_j| \geq N} \int_{Q} \left|f(x+q) - f(x+k)\right|^p dx  \\
   & \leq \Tilde{C} N^p  \sum_{i \in \{1,2\}} \int_{Q_{6N} \setminus{Q_N}} |\delta_i f|^p.
\end{align*}
We have therefore established the existence of a constant $C>0$ independent of $f$ and $N$ such that : 
$$\|f-f_{per,N}\|_{L^p(A_N)} \leq CN\|\delta f \|_{L^p(Q_{6N} \setminus{Q_N})}.$$
\end{proof}

\begin{remark}
For clarity, we have chosen to show the inequality of Lemma \ref{Poincare_Wirtinger_discret} only for the particular sets $A_N$ but the proof could be adapted to any connected set. On the other hand, for non-connected set, this result does not hold. In particular Lemma \ref{Poincare_Wirtinger_discret} is not true in dimension $d=1$. As a counter-example, we can consider the function $f$ such that $f(x)=2$ if $x<0$ and $f(x)=1$ else. Such a function satisfies $\delta f = 0$ on  $\mathbb{R}\setminus{[-1,1]}$ but, for every $N\in \mathbb{N}^*$, $f - f_{N,per}$ 
does not vanish on $A_N = ]-2N,-N]\cup [N,2N[$. Indeed, $f - f_{N,per}(x)$ is equal to $\dfrac{1}{2}$ if $x\in ]-2N,-N]$ and $-\dfrac{1}{2}$ if $x \in [N,2N[$. 
\end{remark}

We are finally able to prove the discrete version of the Gagliardo-Nirenberg-Sobolev inequality stated in Proposition \ref{decomposition_fonctions_A_p}. 


\begin{proof}[Proof of Proposition \ref{decomposition_fonctions_A_p}]
We begin by establishing the density of  $L^p(\mathbb{R}^d)$ in $ \mathbf{A}^p$, equipped with the semi-norm $\|f\| =~\|\delta f\|_{\left(L^p(\mathbb{R}^d)\right)^d}$. We adapt step by step the method used in \cite[Theorem 2.1]{ortner2012note} which studies the continuous case $\nabla f \in \left(L^p(\mathbb{R}^d)\right)^d$. We fix $f \in L^1_{loc}(\mathbb{R}^d)$ such that $\delta f \in \left(L^p(\mathbb{R}^d)\right)^d$. For every $R>0$, we consider $\chi_R \in \mathcal{D}(\mathbb{R}^d)$, a positive function such that 
\begin{equation}
\label{hyp_chi_n}
    Supp(\chi_R) \subset Q_{\frac{5R}{3}}, \quad \chi_R \equiv 1 \text{ in } Q_{\frac{4R}{3}}, \quad \|\chi_R\|_{L^{\infty}(\mathbb{R}^d)} = 1, \quad \| \nabla \chi_R \|_{L^{\infty}(\mathbb{R}^d)} \leq \frac{1}{R}.
\end{equation}
Let $N \in \mathbb{N}^*$. We denote $A_N = Q_{2N} \setminus{Q_N}$
and we consider $f_{per,N}$ the $Q$-periodic function defined by \eqref{moyenne_periodique_f} in Lemma \ref{Poincare_Wirtinger_discret}. We introduce the sequence $f_N = \chi_N \left(f - f_{per,N}\right)$.
For every $N\in \mathbb{N}$ and $i \in \{1,...,d\}$, using a triangle inequality, we have 
$$ \|\delta_i f_N - \delta_i f\|_{L^p(\mathbb{R}^d)} \leq \|\delta_i \chi_N \left(f - f_{per,N}\right)\|_{L^p(\mathbb{R}^d)} + \|(1-\chi_N) \delta_i f \|_{L^{p}(\mathbb{R}^d)}.$$

Since $\delta f \in \left(L^{p}(\mathbb{R}^d)\right)^d$, we have $$\|(1-\chi_N)\delta_i f\|_{L^p(\mathbb{R}^d)} \leq \|\delta_i f \|_{L^p(\mathbb{R}^d \setminus{Q_N})} \underset{N\to+\infty}{\longrightarrow} 0.$$
Since $\chi_N$ is supported in $Q_{\frac{5N}{3}}$ and $\chi_N \equiv 1$ on  $Q_{\frac{4N}{3}}$, $\delta \chi_N$ is supported in $A_N$ for $N$ sufficiently large. 
Lemma \ref{Poincare_Wirtinger_discret} yields the existence of a constant $C>0$ such that for every $N\in \mathbb{N}^*$, we have : 
\begin{equation}
\label{Majoration_A_N_1}
   \|f - f_{per,N}\|_{L^p(A_N)} \leq C N\|\delta f \|_{L^p(\mathbb{R}^d \setminus{Q}_N)}. 
\end{equation}
We next use the mean-value inequality so that, for every $i\in \{1,...,d\}$ and $N\in \mathbb{N}$ : 
\begin{equation}
\label{accroissements_finis_chi}
  \|\delta_i \chi_N\|_{L^{\infty}(\mathbb{R}^d)} \leq \|\nabla \chi_N \|_{\left(L^{\infty}(\mathbb{R}^d)\right)^d} \leq \frac{1}{N}.  
\end{equation}
Using \eqref{Majoration_A_N_1} and \eqref{accroissements_finis_chi}, we obtain : 
\begin{align*}
   \|\delta_i \chi_N \left(f - f_{per,N}\right)\|_{L^p(\mathbb{R}^d)} \leq C \|\delta f \|_{L^p(\mathbb{R}^d \setminus{Q_N})} \underset{N\to+\infty}{\longrightarrow} 0.
\end{align*}
We conclude that the sequence $\delta f_N$ converges to $\delta f$ in $L^p(\mathbb{R}^d)$.

We next show that the sequence $f_N$ is a Cauchy sequence in $\mathcal{E}^p$. Let $M$ and $N$ in $\mathbb{N}$. Since $f_N - f_M \in L^p(\mathbb{R}^d)$, we know from Proposition \ref{Corollaire_GNS_LP} the existence of a constant $C>0$ independent of $N$ and $M$ such that : 
$$\|f_N - f_M\|_{\mathcal{E}^p} \leq C \|\delta f_N - \delta f_M\|_{L^p(\mathbb{R}^d)}.$$
We have proved that $\delta f_N$ converges in $\left(L^p(\mathbb{R}^d)\right)^d$, it is therefore a Cauchy sequence for the $L^p$-norm and we can deduce that $f_N$ is also a Cauchy sequence in the Banach space $\mathcal{E}^p$. Thus, there exists $g\in\mathcal{E}^p$ such that $f_N$ converges to $g$ in $\mathcal{E}^p$. We use Proposition \ref{sous-suite} and we obtain, up to an extraction,  that $f_N$ also converges to $g$ in $L^1_{loc}(\mathbb{R}^d)$. Since $\delta f_N$ converges to $\delta f$ in $L^p$, the uniqueness of the limit in $L^1_{loc}(\mathbb{R}^d)$ shows that 
$\displaystyle \delta g = \lim_{N \to \infty} \delta f_N = \delta f$.
We deduce that $\delta(f-g) = 0$, that is, $f_{per} := f-g$ is a $Q$-periodic function.  
To conclude, we again use Proposition \ref{Corollaire_GNS_LP} to obtain
 $$\|f_N\|_{\mathcal{E}^p}\leq C \|\delta f_N\|_{L^p(\mathbb{R}^d)}.$$
We pass to the limit in this inequality and we obtain  $$\|f-f_{per}\|_{\mathcal{E}^p} = \|g\|_{\mathcal{E}^p}\leq C \|\delta f\|_{L^p(\mathbb{R}^d)}.$$

There remains to show the uniqueness of $f_{per}$. We assume there exists $f_{per,1}$ and $f_{per,2}$ two periodic functions such that both $\Tilde{f}_1 := f-f_{per,1}$ and $\Tilde{f}_2 := f - f_{per,2}$ belong to $\mathcal{E}^p$. Thus, we have $\mathcal{M}\left(|f_{per,1} - f_{per,2}|\right) = \mathcal{M}\left(|\Tilde{f}_2 - \Tilde{f}_1|\right) \in L^{p^*}(\mathbb{R}^d)$. Since $|f_{per,1} - f_{per,2}|$ is a periodic function, $\mathcal{M}\left(|f_{per,1} - f_{per,2}|\right)$ is constant and is therefore equal to 0. 
\end{proof}

From the previous proof, we deduce the next corollary, which will be useful in the next Section.

\begin{corol}
\label{densité_Ap}
Let $f \in \mathcal{A}^p$, then there exists a sequence $(f_n)_{n\in \mathbb{N}}$ of $L^p(\mathbb{R}^d)$ such that $\displaystyle \lim_{n \to \infty} \|f_n-f\|_{\mathcal{A}^p} =0$. In addition, if $f \in L^{\infty}(\mathbb{R}^d)$, the sequence $(f_n)_{n\in \mathbb{N}}$ can be choosen such that for every $n\in \mathbb{N}$, $\displaystyle \|f_n\|_{L^{\infty}(\mathbb{R}^d)} \leq 2 \|f\|_{L^{\infty}(\mathbb{R}^d)}$.
\end{corol}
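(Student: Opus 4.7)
The plan is to extract the approximation sequence already constructed in the proof of Proposition~\ref{decomposition_fonctions_A_p} and to show that, when specialized to $f \in \mathcal{A}^p$, it converges in the full norm $\|\cdot\|_{\mathcal{A}^p}$ rather than merely in the semi-norm $\|\delta \cdot\|_{L^p}$. The only genuinely new ingredient I need is to identify the periodic component in the decomposition given by Proposition~\ref{decomposition_fonctions_A_p} as being identically zero in this setting.

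The key preliminary observation is that for $f \in \mathcal{A}^p$, the periodic function $f_{per}$ furnished by Proposition~\ref{decomposition_fonctions_A_p} vanishes. Indeed, since $\mathcal{A}^p \subset \mathcal{E}^p$, the function $f$ itself satisfies $f - 0 \in \mathcal{E}^p$, so $f_{per}=0$ is a valid candidate in the decomposition, and the uniqueness part of Proposition~\ref{decomposition_fonctions_A_p} forces $f_{per} = 0$. Equivalently, any nonzero periodic function has a constant nonzero local average $\mathcal{M}(|f_{per}|)$, which cannot belong to $L^{p^*}(\mathbb{R}^d)$; this contradicts $\mathcal{M}(|f_{per}|) = \mathcal{M}(|f - (f - f_{per})|) \in L^{p^*}(\mathbb{R}^d)$.

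With this in hand, I would reuse verbatim the sequence $f_N := \chi_N (f - f_{per,N})$ introduced in the proof of Proposition~\ref{decomposition_fonctions_A_p}, with $\chi_N$ satisfying \eqref{hyp_chi_n} and $f_{per,N}$ defined by \eqref{moyenne_periodique_f}. That proof already establishes three facts: first, $f_N \in L^p(\mathbb{R}^d)$; second, $\delta f_N \to \delta f$ strongly in $\left(L^p(\mathbb{R}^d)\right)^d$; and third, $f_N \to g$ in $\mathcal{E}^p$ for some $g \in \mathcal{E}^p$ satisfying $\delta g = \delta f$, so that $g = f - f_{per}$. The observation $f_{per} = 0$ upgrades this to $g = f$, hence $\|f_N - f\|_{\mathcal{E}^p} \to 0$ and $\|\delta f_N - \delta f\|_{L^p} \to 0$, which is by definition \eqref{normAp} the convergence $\|f_N - f\|_{\mathcal{A}^p} \to 0$.

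For the $L^\infty$ bound when $f \in L^\infty(\mathbb{R}^d)$, the formula \eqref{moyenne_periodique_f} exhibits $f_{per,N}$ as a convex combination of translates of $f$ indexed by the finite set $\mathcal{I}_N$, yielding $\|f_{per,N}\|_{L^\infty(\mathbb{R}^d)} \leq \|f\|_{L^\infty(\mathbb{R}^d)}$; combined with $\|\chi_N\|_{L^\infty(\mathbb{R}^d)} \leq 1$ from \eqref{hyp_chi_n} and a triangle inequality, this yields $\|f_N\|_{L^\infty(\mathbb{R}^d)} \leq 2\|f\|_{L^\infty(\mathbb{R}^d)}$. There is no real obstacle here: the only nontrivial step is the vanishing of $f_{per}$, and everything else amounts to recording what has already been proved in the construction preceding this corollary.
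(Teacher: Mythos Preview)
Your proposal is correct and follows essentially the same approach as the paper: reuse the sequence $f_N = \chi_N(f - f_{per,N})$ from the proof of Proposition~\ref{decomposition_fonctions_A_p}, observe that $f_{per}=0$ when $f\in\mathcal{A}^p$, and read off the $L^\infty$ bound from the explicit formula. Your write-up is in fact slightly more explicit than the paper's, since you record both the $\mathcal{E}^p$ convergence and the $\delta$-convergence in $L^p$ needed for the full $\mathcal{A}^p$ norm.
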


\begin{proof}
In the proof of Proposition \ref{decomposition_fonctions_A_p}, we have established the existence of a sequence $f_n$ of $L^p(\mathbb{R}^d)$-functions and the existence of a $Q$-periodic function $f_{per}$ such that $f_n$ converges to $f-f_{per}$ in $\mathcal{E}^p$. Since $f$ and $f - f_{per}$ both belong to $\mathcal{E}^p$, we clearly have $f_{per}=0$. We can conclude that $f_n$ converges to $f$ in $\mathcal{E}^p$.
Now, we assume that $f \in L^{\infty}(\mathbb{R}^d)$. We have shown in the proof of Proposition \ref{decomposition_fonctions_A_p}, that the sequence $(f_n)_{n\in \mathbb{N}^*}$ can be defined by
$f_n = \chi_n\left(f - f_{per,n}\right)$, 
where $f_{per,n}$ is the periodic function given by \eqref{moyenne_periodique_f} and $\chi_n$ satisfies \eqref{hyp_chi_n}. We clearly have $\|f_{per,n}\|_{L^{\infty}(\mathbb{R}^d)} \leq \|f\|_{L^{\infty}(\mathbb{R}^d)}$. Finally, since $\|\chi_n\|_{L^{\infty}(\mathbb{R}^d)} \leq 1$, we obtain $\|f_n\|_{L^{\infty}(\mathbb{R}^d)} \leq 2 \|f\|_{L^{\infty}(\mathbb{R}^d)}$ using a triangle inequality.
\end{proof}

To conclude this section, we show that it is possible to describe the periodic function $f_{per}$ given in Proposition \ref{decomposition_fonctions_A_p} when the function $f \in \mathcal{A}^p$ is assumed uniformly H\"older-continuous. 


\begin{prop}
\label{expliciteperiodiqueap}
Let $f \in \mathbf{A}^p \cap L^{\infty}(\mathbb{R}^d)$ for $1\leq p<d$. Then the unique $Q$-periodic function $f_{per}$ such that $f-f_{per} \in \mathcal{E}^p$ given by Proposition \ref{decomposition_fonctions_A_p} is equal to
\begin{equation}
\label{expressionfper}
    f_{per} = \lim_{N \to \infty} \dfrac{1}{\sharp I_N} \sum_{k \in I_N} f(.+k), 
\end{equation}
where $I_N= \left\{ k \in \mathbb{Z}^d \ \middle| \ |k| \leq N \right\}$ for every $N \in \mathbb{N}^*$. In addition, if there exists $\alpha \in ]0,1[$ such that $f\in \mathcal{C}^{0,\alpha}(\mathbb{R}^d)$, then $f_{per} \in \mathcal{C}^{0, \alpha}(\mathbb{R}^d)$. 
\end{prop}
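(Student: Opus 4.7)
The plan is to exploit Proposition~\ref{decomposition_fonctions_A_p} to split $f=f_{per}+\Tilde f$ with $f_{per}$ the unique $Q$-periodic function such that $\Tilde f \in \mathcal{E}^p$, and then to show that the Ces\`aro-like averages applied to the remainder $\Tilde f$ vanish. Since $f_{per}$ is invariant under all integer translations, we have $\frac{1}{\sharp I_N}\sum_{k\in I_N}f_{per}(\cdot +k)=f_{per}$ identically for every $N$. It therefore suffices to show that
$$
g_N:=\frac{1}{\sharp I_N}\sum_{k\in I_N}\Tilde f(\cdot +k)
$$
converges to $0$ in $L^1_{loc}(\mathbb{R}^d)$.

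To estimate $g_N$ on $Q$, I would apply the triangle inequality and the change of variable $y=x+k$ to obtain
$$
\int_Q|g_N(x)|\,dx \le \frac{1}{\sharp I_N}\sum_{k\in I_N}\int_{Q+k}|\Tilde f(y)|\,dy \le \frac{1}{\sharp I_N}\int_{B_{N+\sqrt d}}|\Tilde f|,
$$
since the translates $Q+k$ for $k\in I_N$ are pairwise disjoint (up to a null set) and all contained in $B_{N+\sqrt d}$. Proposition~\ref{proposition_moyenne_Ap} (whose proof depends only on $\|\cdot\|_{\mathcal{E}^p}$ and therefore applies to $\Tilde f \in \mathcal{E}^p$) gives $\int_{B_R}|\Tilde f|\le C R^{d-d/p^*}$; combined with $\sharp I_N\sim c_dN^d$ this yields $\int_Q|g_N|=O(N^{-d/p^*})\to 0$. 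The same argument after a lattice shift produces the analogous bound for $\int_{Q+j}|g_N|$ with $j\in\mathbb{Z}^d$, the cubes now being contained in the ball $B_{N+|j|+\sqrt d}$, which does not affect the asymptotics in $N$. Since any bounded subset of $\mathbb{R}^d$ is contained in a finite union of integer translates of $Q$, this proves $g_N\to 0$ in $L^1_{loc}(\mathbb{R}^d)$, hence $\frac{1}{\sharp I_N}\sum_{k\in I_N}f(\cdot+k)\to f_{per}$ in $L^1_{loc}$, which is \eqref{expressionfper}.

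For the H\"older assertion, I would observe that the averages $h_N:=\frac{1}{\sharp I_N}\sum_{k\in I_N}f(\cdot+k)$ are convex combinations of the translates $f(\cdot+k)$, so they automatically satisfy $\|h_N\|_{L^\infty}\le\|f\|_{L^\infty}$ and $[h_N]_{\mathcal{C}^{0,\alpha}}\le [f]_{\mathcal{C}^{0,\alpha}}$ for every $N$. The Arzel\`a-Ascoli theorem then guarantees that $(h_N)$ admits locally uniformly convergent subsequences; the uniqueness of the $L^1_{loc}$ limit established above forces every such subsequence to converge to $f_{per}$, so in fact the entire sequence $(h_N)$ converges locally uniformly to $f_{per}$. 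The uniform bound and the $\alpha$-H\"older estimate then pass to this uniform limit, giving $f_{per}\in\mathcal{C}^{0,\alpha}(\mathbb{R}^d)$ with the same constants.

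The main technical ingredient -- the control of the averages $\frac{1}{|B_R|}\int_{B_R}|\Tilde f|$ on large balls -- is already delivered by Proposition~\ref{proposition_moyenne_Ap}, so no new estimate is actually needed; the only work is to set up the splitting and carry out the disjointness/translation bookkeeping carefully.
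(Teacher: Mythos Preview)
Your proof is correct and follows the same overall strategy as the paper: split $f=f_{per}+\Tilde f$ via Proposition~\ref{decomposition_fonctions_A_p}, show that the lattice averages of $\Tilde f$ vanish in $L^1_{loc}$, and pass the uniform $L^\infty$ and H\"older bounds of the averages to the limit. The only tactical difference lies in how you obtain the $L^1_{loc}$ convergence of $g_N$: the paper first proves that $\mathcal{M}(|\Tilde f|)$ is Lipschitz (this is where the hypothesis $f\in L^\infty$ enters) and hence vanishes at infinity, then applies a pointwise Ces\`aro argument to $\mathcal{M}(|\Tilde f|)(x+k)$; you instead bound $\int_{Q+j}|g_N|$ directly by $\frac{1}{\sharp I_N}\int_{B_{N+|j|+\sqrt d}}|\Tilde f|$ and invoke the ball-average decay of Proposition~\ref{proposition_moyenne_Ap}. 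Your route is slightly more economical and, as you implicitly observe, does not actually use the $L^\infty$ assumption for the limit formula~\eqref{expressionfper} itself---only the H\"older assertion requires boundedness.
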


\begin{proof}
We define $\Tilde{f} = f -f_{per}$. We first show that if $f \in L^{\infty}(\mathbb{R}^d)$, then $\mathcal{M}(|\Tilde{f}|)$ is Lipschitz continuous on $\mathbb{R}^d$. To this end, we remark that for every $i \in \{1,...,d\}$, we have 
$$ \partial_i \mathcal{M}(|\Tilde{f}|)(x) = \int_{Q_i + \Tilde{x}_i} \delta_i |\Tilde{f}(y_1,...,y_{i-1},x_i,y_{i+1},...,y_d)| d\Tilde{y}_i, $$
where $\displaystyle Q_i = \prod_{j \in \{1,...,d\}\setminus{\{i\}}} ]0,1[$ and $\Tilde{x}_i = (x_1,..,x_{i-1}, x_{i+1},...,x_d)$. Using a triangle inequality we have $\delta_i |\Tilde{f}| \leq |\delta_i \Tilde{f}|$ and, since $\delta_i \Tilde{f} = \delta_i f$ and $\|\delta f\|_{L^{\infty}(\mathbb{R}^d)} \leq 2 \|f\|_{L^{\infty}(\mathbb{R}^d)}$, we can bound the integral uniformly with respect to $x$ in the previous equality and we have : 
$$\left\|\partial_i \mathcal{M}(| \Tilde{f}| )\right\|_{L^{\infty}(\mathbb{R}^d)} \leq 2 \|f\|_{L^{\infty}(\mathbb{R}^d)}.$$
It follows that $\nabla  \mathcal{M}(| \Tilde{f}|)$ belongs to $\left( L^{\infty}(\mathbb{R}^d)\right)^d$ and we deduce that $\mathcal{M}(| \Tilde{f}|)$ is Lipschitz-continuous. Moreover, since $\mathcal{M}(| \Tilde{f}|)$ belongs to $L^{p^*}(\mathbb{R}^d)$, we have that 
$\displaystyle \lim_{|x| \to \infty} \mathcal{M}(| \Tilde{f}|)(x) = 0$, 
and for every $x\in \mathbb{R}^d$, the Ces\`aro mean of the sequence $\left(\mathcal{M}(| \Tilde{f}|)(x+k)\right)_{k \in \mathbb{Z}^d}$ is equal to 0. We therefore obtain that $\displaystyle \lim_{N\to \infty} \dfrac{1}{\sharp I_N} \sum_{k \in I_N} \mathcal{M}(| \Tilde{f}|)(x+k) = 0$.
Consequently, for every $x \in \mathbb{R}^d$ and $N \in \mathbb{N}^*$, we have 
\begin{align*}
    \bigintss_{Q+x} \left| \dfrac{1}{\sharp I_N} \sum_{k\in I_N} f(y+k) - f_{per}(y)\right| dy & = \bigintss_{Q+x} \left| \dfrac{1}{\sharp I_N} \sum_{k\in I_N} \Tilde{f}(y+k)\right| dy \\
   &  \leq \dfrac{1}{\sharp I_N} \sum_{k \in I_N} \mathcal{M}(| \Tilde{f}|)(x+k) \stackrel{N \to \infty}{\longrightarrow} 0,
\end{align*}
and deduce $\displaystyle \lim_{N \to \infty} \dfrac{1}{\sharp I_N} \sum_{k\in I_N} f(.+k) = f_{per}$ in $L^1_{loc}(\mathbb{R}^d)$. If we now assume that $f \in \mathcal{C}^{0,\alpha}(\mathbb{R}^d)$ for $\alpha \in ]0,1[$, we have for every $N \in \mathbb{N}^*$ and $x,y \in \mathbb{R}^d$ : 
\begin{align}
\label{Cesaro_holder_continuity}
   \left|\dfrac{1}{\sharp I_N} \sum_{k\in I_N} f(x+k)\right| & \leq \|f\|_{L^{\infty}(\mathbb{R}^d)},\\ 
   \label{Cesaro_uniform_bound}
    \left|\dfrac{1}{\sharp I_N} \sum_{k\in I_N} f(x+k) - \dfrac{1}{\sharp I_N} \sum_{k\in I_N} f(y+k)\right| & \leq \|f\|_{\mathcal{C}^{0, \alpha}(\mathbb{R}^d)} |x-y|^{\alpha}.
\end{align}
In addition, up to an extraction, the sequence $\left(\dfrac{1}{\sharp I_N} \sum_{k\in I_N} f(.+k)\right)_{N \in \mathbb{N}^*}$ converges to $f_{per}$ almost everywhere and, considering the limit when $N \to \infty$ in \eqref{Cesaro_uniform_bound} and \eqref{Cesaro_holder_continuity}, we obtain for almost all $x,y \in \mathbb{R}^d$~: 
$$\left|f_{per}(x)\right| \leq \|f\|_{L^{\infty}(\mathbb{R}^d)} \quad \text{and} \quad \left|f_{per}(x) - f_{per}(y)\right| \leq \|f\|_{\mathcal{C}^{0, \alpha}(\mathbb{R}^d)} |x-y|^{\alpha}.$$
\end{proof}

\begin{remark}
More generally, in the proof of Proposition \ref{expliciteperiodiqueap} we have actually shown that if $f = f_{per} + \Tilde{f}$ where $f_{per}$ is periodic and $\displaystyle \lim_{|x| \to \infty} \mathcal{M}(|\Tilde{f}|) = 0$, then $f_{per}$ is necessarily given by~\eqref{expressionfper}. 
\end{remark}

\begin{remark}
\label{remarque_ap_period_T}
All the results established in this section can be easily adapted in a context of $T$-periodicity at infinity, for any period $T=(T_1,T_2,...,T_d)$, considering $ \displaystyle \delta_T f := \left(f(.+T_i) - f\right)_{i \in \{1,...,d\}}$ instead of $\delta f$ and $ \displaystyle \mathcal{M}_T(|f|)(x) := \int_{\prod_{i=1}^d ]0,T_i[} |f(y+x)| dy $ instead of $\mathcal{M}(|f|)$.
\end{remark}

\section{The homogenization problem when $p<d$}

\label{SectionAP_2}

In this section we study homogenization problem \eqref{equationepsilon_new} when the coefficient $a$ satisfies assumptions \eqref{hypothèses1}, \eqref{hypothèses2}, \eqref{hypothèses22} and \eqref{hypothèses3} for $1<p<d$ and we prove Theorem \ref{Correcteur_A_P} in this case. As in the previous section, our assumption $p<d$ of course requires that $d\geq 2$. Since $p<d$, Proposition \ref{decomposition_fonctions_A_p} gives the existence of two matrix-valued functions $a_{per} \in \left(L^2_{per}\right)^{d \times d}$ and $\Tilde{a} \in \left(\mathcal{A}^p\right)^{d \times d}$ such that $a = a_{per} + \Tilde{a}$ and $\|\Tilde{a}\|_{\mathcal{E}^p} \leq C \|\delta a \|_{L^p(\mathbb{R}^d)}$,
where $C>0$ is a constant independent of $a$.
We are therefore indeed studying a problem of perturbed periodic geometry in the presence of a local defect $\Tilde{a}$, which is, up to a local averaging, a matrix-valued function with components in $L^{p^*}(\mathbb{R}^d)$. We note that assumptions \eqref{hypothèses1}, \eqref{hypothèses22} and Proposition \ref{expliciteperiodiqueap} ensure that the coefficients $a_{per}$ and $\Tilde{a}$ also satisfy the following two properties of ellipticity and regularity :
\begin{align}
  \label{hypothèse_a_elliptic_AP}
   \exists \lambda> 0, \text{  $\forall x$, $\xi \in \mathbb{R}^d$} \quad  \lambda |\xi|^2 \leq \langle a_{per}(x)\xi, \xi\rangle, \\  
   \label{hypothèse_a_holder_AP}
   a_{per}, \Tilde{a} \in \left(\mathcal{C}^{0,\alpha}(\mathbb{R}^d)\right)^{d\times d}.
\end{align}


In order to study the corrector equation, we adapt the method introduced in \cite{ blanc2018correctors}. We remark that \eqref{eq_correcteur_A_p} is equivalent to $- \operatorname{div}(a\nabla \Tilde{w}_q) = \operatorname{div}(\Tilde{a}(\nabla w_{per,q} + q))$.
Under assumption \eqref{hypothèse_a_holder_AP}, elliptic regularity theory (see for instance \cite[Theorem 5.19 p.87]{MR3099262}) also implies that $\nabla w_{per,q} \in \left(\mathcal{C}^{0, \alpha}(\mathbb{R}^d)\right)^d$. Thus, $f = \Tilde{a}(\nabla w_{per} + q)$ belongs to $\left(\mathcal{E}^p \cap \mathcal{C}^{0, \alpha}(\mathbb{R}^d)\right)^d$ and, since $\nabla w_{per,q}$ is periodic, we have $\delta f \in \left(L^p(\mathbb{R}^d)\right)^{d \times d}$. To prove Theorem~\ref{Correcteur_A_P}, it is therefore sufficient to study the more general problem :
\begin{equation}
    \label{equation_generale_Ap}
    - \operatorname{div}(a \nabla u) = \operatorname{div}(f) \quad \text{on } \mathbb{R}^d,
\end{equation}
for every $ f\in \left(\mathcal{A}^p \cap \mathcal{C}^{0, \alpha}(\mathbb{R}^d)\right)^d$. 


\subsection{Preliminary regularity result}

We begin by establishing a regularity result for the solutions $u$ to \eqref{equation_generale_Ap} such that $\nabla u \in \left(\mathcal{A}^p\right)^d$. We need to introduce the space 
$$\displaystyle L^2_{unif}(\mathbb{R}^d) = \left\{f \in L^2_{loc}(\mathbb{R}^d) \ \middle| \ \sup_{x \in \mathbb{R}^d} \|f\|_{L^2(B_1(x))} < \infty \right\},$$
equipped with $\displaystyle \|f\|_{L^2_{unif}} = \sup_{x \in \mathbb{R}^d} \|f\|_{L^2(B_1(x))}.$ We have :

\begin{lemme}
\label{lemme_majoration_L2_unif}
There exists $C>0$ such that, for every $0<R<1$ and $f \in \mathcal{C}^{0,\alpha}(\mathbb{R}^d)$, 
\begin{equation}
\label{born_norm_unif_ap}
  \|f\|_{L^2_{unif}(\mathbb{R}^d)} \leq C\left(\dfrac{1}{|B_R|}\|\mathcal{M}(|f|)\|_{L^2_{unif}(\mathbb{R}^d)} + R^{\alpha}\|f\|_{C^{0,\alpha}(\mathbb{R}^d)}\right).  
\end{equation}

\end{lemme}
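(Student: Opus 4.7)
The plan is to argue pointwise first, then integrate and take the supremum. For any $y \in \mathbb{R}^d$ and any $R \in (0,1)$, I would start from the elementary inequality $|f(y)| \leq |f(z)| + |f(y)-f(z)|$ and average over $z \in B_R(y)$ to obtain
$$|f(y)| \leq \frac{1}{|B_R|}\int_{B_R(y)} |f(z)|\,dz + \frac{1}{|B_R|}\int_{B_R(y)}|f(y)-f(z)|\,dz.$$
The Hölder regularity of $f$ bounds the second term by $R^\alpha \|f\|_{\mathcal{C}^{0,\alpha}(\mathbb{R}^d)}$, which is exactly the Hölder correction appearing on the right-hand side of \eqref{born_norm_unif_ap}.

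Next, I would compare the ball average to the cube averages that define $\mathcal{M}(|f|)$. Since $R<1$, the ball $B_R(y)$ is contained in the cube $y+[-1,1]^d$, which can be covered by $2^d$ translated unit cubes $Q+z_k(y)$ with each $z_k(y)$ lying at bounded distance from $y$ (independent of $R$). This gives
$$\int_{B_R(y)} |f(z)|\,dz \leq \sum_{k} \mathcal{M}(|f|)(z_k(y)),$$
and hence a pointwise bound of the form
$$|f(y)|^2 \leq \frac{C}{|B_R|^2}\sum_{k} \mathcal{M}(|f|)(z_k(y))^2 + C\,R^{2\alpha}\|f\|_{\mathcal{C}^{0,\alpha}(\mathbb{R}^d)}^2.$$

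Finally, I would integrate this bound over $y \in B_1(x)$. Each term $\int_{B_1(x)} \mathcal{M}(|f|)(z_k(y))^2\,dy$ reduces, by a translation in the variable $y$, to an integral of $\mathcal{M}(|f|)^2$ over a unit ball centered near $x$, hence is bounded by $C \|\mathcal{M}(|f|)\|_{L^2_{unif}(\mathbb{R}^d)}^2$. Taking square roots, then the supremum over $x \in \mathbb{R}^d$, and absorbing all dimensional constants in the single constant $C$, yields \eqref{born_norm_unif_ap}.

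The main (and rather mild) obstacle is the geometric bookkeeping in the covering step: verifying that the centers $z_k(y)$ depend on $y$ only through a uniformly bounded translation, so that the $L^2_{unif}$ norm of $\mathcal{M}(|f|)$ at translated points is controlled by a single constant multiple of $\|\mathcal{M}(|f|)\|_{L^2_{unif}(\mathbb{R}^d)}$, independently of $R$. Once that is set up, every step is standard.
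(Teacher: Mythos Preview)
Your proof is correct and follows essentially the same approach as the paper: bound $|f(y)|$ pointwise by a local average plus a H\"older correction $R^\alpha\|f\|_{\mathcal{C}^{0,\alpha}}$, then integrate over $B_1(x_0)$ and take the supremum. The only difference is that the paper averages over the partial ball $B_R^+(x):=B_R(x)\cap(Q+x)$, which already sits inside the single cube $Q+x$ and therefore gives $\fint_{B_R^+(x)}|f|\leq \tfrac{C}{|B_R|}\mathcal{M}(|f|)(x)$ directly, avoiding your $2^d$-cube covering and translation bookkeeping altogether.
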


\begin{proof}
We denote $B_R^+(x)  = B_R(x) \cap \left(Q+x\right)$ and $\displaystyle \fint_{B_R^+(x)} = \dfrac{1}{|B_R^+(x)|} \int_{B_R^+(x)}$, 
for $R>0$ and $x\in \mathbb{R}^d$. For every $x_0 \in \mathbb{R}^d$ and $0<R<1$, we have :
\begin{align*}
    \int_{B_1(x_0)}\left| f(x) - \fint_{B_R^+(x)} f(y) dy \right|^2 dx & \leq  \int_{B_1(x_0)}\left| \fint_{B_R^+(x)} |f(x) - f(y)| dy \right|^2 dx \\
    & \leq \|f\|_{\mathcal{C}^{0, \alpha}(\mathbb{R}^d)}^2 \int_{B_1(x_0)}\left| \fint_{B_R^+(x)} |x-y|^{\alpha} dy \right|^2 dx \\
    & \leq \|f\|_{\mathcal{C}^{0, \alpha}(\mathbb{R}^d)}^2 R^{2 \alpha} |B_1|.
\end{align*}
Using a triangle inequality, we therefore deduce : 
\begin{align*}
    \|f\|_{L^2(B_1(x_0))} &\leq \left\|f - \fint_{B_R^+(.)} f \right\|_{L^2(B_1(x_0))} + \left\|\fint_{B_R^+(.)} |f| \right\|_{L^2(B_1(x_0))} \\
    &\leq R^{\alpha} |B_1|^{1/2} \|\nabla u\|_{\mathcal{C}^{0, \alpha}(\mathbb{R}^d)} + \left\|\fint_{B_R^+(.)} |f| \right\|_{L^2(B_1(x_0))}.
\end{align*}
Since $B_R^+(x)\subset Q+x$, we obtain 
\begin{align*}
    \|f\|_{L^2(B_1(x_0))} 
    & \leq R^{\alpha} |B_1|^{1/2} \|f\|_{\mathcal{C}^{0, \alpha}(\mathbb{R}^d)} + \dfrac{1}{|B_R|} \left\| \mathcal{M}(|f|)\right\|_{L^2_{unif}(\mathbb{R}^d)}.
\end{align*}
Taking the supremum over all $x_0$ yields \eqref{born_norm_unif_ap} which concludes the proof. 
\end{proof}

Lemma \ref{lemme_majoration_L2_unif} is now useful to establish : 

\begin{prop}
\label{regularite_A_P}
There exists a constant $C>0$ such that for every $f \in \left(\mathcal{C}^{0,\alpha}(\mathbb{R}^d)\right)^d$ and $u \in H^1_{loc}(\mathbb{R}^d)$ solution to \eqref{equation_generale_Ap}
with $\nabla u \in \left(\mathcal{C}^{0,\alpha}(\mathbb{R}^d)\right)^d$, we have :
\begin{equation}
    \|\nabla u \|_{\mathcal{C}^{0, \alpha}(\mathbb{R}^d)} \leq C\left( \|\mathcal{M}\left(\left|\nabla u \right| \right) \|_{L^2_{unif}} + \|f\|_{\mathcal{C}^{0, \alpha}(\mathbb{R}^d)} \right). 
\end{equation}
\end{prop}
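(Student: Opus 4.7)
The plan is to combine a standard interior Schauder estimate for divergence-form elliptic equations with the bound given in Lemma \ref{lemme_majoration_L2_unif}, then perform an absorption argument in the Hölder norm of $\nabla u$. Since $a$ satisfies the uniform ellipticity \eqref{hypothèse_a_elliptic_AP} and the uniform Hölder regularity \eqref{hypothèse_a_holder_AP}, classical De Giorgi–Nash–Moser–Schauder theory for equations of the form $-\operatorname{div}(a\nabla u) = \operatorname{div}(f)$ with $a,f \in \mathcal{C}^{0,\alpha}$ (see, e.g., \cite{MR3099262}) yields an interior estimate: there exists a constant $C_0>0$, independent of $x_0 \in \mathbb{R}^d$, such that
\begin{equation*}
\|\nabla u\|_{\mathcal{C}^{0,\alpha}(B_1(x_0))} \leq C_0 \bigl(\|\nabla u\|_{L^2(B_2(x_0))} + \|f\|_{\mathcal{C}^{0,\alpha}(B_2(x_0))}\bigr).
\end{equation*}
Taking the supremum over $x_0 \in \mathbb{R}^d$ and using the definition of the norms involved, this translates into
\begin{equation*}
\|\nabla u\|_{\mathcal{C}^{0,\alpha}(\mathbb{R}^d)} \leq C_1 \bigl(\|\nabla u\|_{L^2_{unif}(\mathbb{R}^d)} + \|f\|_{\mathcal{C}^{0,\alpha}(\mathbb{R}^d)}\bigr),
\end{equation*}
for some constant $C_1 >0$ independent of $u$ and $f$.

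Next, I would apply Lemma \ref{lemme_majoration_L2_unif} to the function $\nabla u \in \left(\mathcal{C}^{0,\alpha}(\mathbb{R}^d)\right)^d$: for every $R \in ]0,1[$,
\begin{equation*}
\|\nabla u\|_{L^2_{unif}(\mathbb{R}^d)} \leq C_2 \left(\frac{1}{|B_R|} \|\mathcal{M}(|\nabla u|)\|_{L^2_{unif}(\mathbb{R}^d)} + R^{\alpha}\|\nabla u\|_{\mathcal{C}^{0,\alpha}(\mathbb{R}^d)}\right).
\end{equation*}
Inserting this into the Schauder estimate yields, for every $R\in ]0,1[$,
\begin{equation*}
\|\nabla u\|_{\mathcal{C}^{0,\alpha}(\mathbb{R}^d)} \leq C_3 \left(\frac{1}{|B_R|}\|\mathcal{M}(|\nabla u|)\|_{L^2_{unif}(\mathbb{R}^d)} + R^{\alpha}\|\nabla u\|_{\mathcal{C}^{0,\alpha}(\mathbb{R}^d)} + \|f\|_{\mathcal{C}^{0,\alpha}(\mathbb{R}^d)}\right),
\end{equation*}
where $C_3 = C_1 \max(C_2,1)$. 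Since $\|\nabla u\|_{\mathcal{C}^{0,\alpha}(\mathbb{R}^d)}$ is finite by assumption, we can choose $R>0$ small enough so that $C_3 R^{\alpha} \leq 1/2$, absorb the corresponding term into the left-hand side, and conclude
\begin{equation*}
\|\nabla u\|_{\mathcal{C}^{0,\alpha}(\mathbb{R}^d)} \leq C \bigl(\|\mathcal{M}(|\nabla u|)\|_{L^2_{unif}(\mathbb{R}^d)} + \|f\|_{\mathcal{C}^{0,\alpha}(\mathbb{R}^d)}\bigr),
\end{equation*}
with $C>0$ depending only on $\alpha$, $d$, $\lambda$, $\|a\|_{\mathcal{C}^{0,\alpha}}$, and $\|a\|_{L^{\infty}}$.

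The main technical obstacle I anticipate is the first step: invoking a sufficiently sharp interior Schauder estimate for the divergence-form equation with right-hand side in divergence form and coefficient $a$ only Hölder continuous, with a constant that is uniform in $x_0$. This uniformity is ultimately a consequence of \eqref{hypothèse_a_elliptic_AP}–\eqref{hypothèse_a_holder_AP} being global, but care must be taken to pick the right reference (the statement in \cite{MR3099262} already handles this). The remaining absorption argument is standard provided $\|\nabla u\|_{\mathcal{C}^{0,\alpha}}$ is a priori finite, which is exactly the hypothesis of the proposition.
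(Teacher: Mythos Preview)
Your proposal is correct and follows essentially the same approach as the paper: an interior Schauder estimate from \cite{MR3099262} giving $\|\nabla u\|_{\mathcal{C}^{0,\alpha}(\mathbb{R}^d)} \leq C(\|\nabla u\|_{L^2_{unif}} + \|f\|_{\mathcal{C}^{0,\alpha}})$, then Lemma~\ref{lemme_majoration_L2_unif} applied to $\nabla u$, followed by choosing $R$ small enough to absorb the $R^\alpha\|\nabla u\|_{\mathcal{C}^{0,\alpha}}$ term. The only cosmetic difference is that the paper uses $B_4(x_0)$ rather than $B_2(x_0)$ in the interior estimate.
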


\begin{proof}
Since $u$ is a solution to equation \eqref{equation_generale_Ap} where the coefficient $a$ satisfies assumption \eqref{hypothèse_a_holder_AP}, we know from \cite[Theorem 5.19 p.87]{MR3099262} (see also \cite[Theorem 3.2 p.88]{giaquinta1983multiple}) there exists a constant $C>0$, such that for every $x_0\in \mathbb{R}^d$, we have : 
\begin{align*}
   \|\nabla u\|_{\mathcal{C}^{0,\alpha}(B_{1}(x_0))} & \leq C \left( \|\nabla u\|_{L^2(B_4(x_0))} + \|f\|_{\mathcal{C}^{0,\alpha}(\mathbb{R}^d)}\right) \leq C \left( C_1\|\nabla u\|_{L^2_{unif}(\mathbb{R}^d)} + \|f\|_{\mathcal{C}^{0,\alpha}(\mathbb{R}^d)}\right),
\end{align*}
where $C_1>0$ depends only on the dimension $d$. Since the right-hand side in the previous inequality is independent of $x_0$, there exists a constant $C_2>0$ such that 
$$\|\nabla u\|_{\mathcal{C}^{0,\alpha}(\mathbb{R}^d)} \leq C_2 \left( \|\nabla u\|_{L^2_{unif}(\mathbb{R}^d)} + \|f\|_{\mathcal{C}^{0,\alpha}(\mathbb{R}^d)}\right).$$

For every $0<R<1$, we use this inequality and Lemma \ref{lemme_majoration_L2_unif} to obtain the existence of a constant $C_3>0$, independent of $R$, u and $f$, such that, 
$$ \|\nabla u\|_{\mathcal{C}^{0,\alpha}(\mathbb{R}^d)} \leq C_3\left(\dfrac{1}{|B_R|}\|\mathcal{M}(|\nabla u|)\|_{L^2_{unif}(\mathbb{R}^d)} + R^{\alpha}\|\nabla u\|_{C^{0,\alpha}(\mathbb{R}^d)} + \|f\|_{C^{0,\alpha}(\mathbb{R}^d)} \right).$$
It remains to choose $R$ such that $R^{\alpha} C_3 < 1$ to conclude.
\end{proof}

\subsection{Well-posedness for (\ref{equation_generale_Ap}) when the coefficient is periodic}
 
We next study equation \eqref{equation_generale_Ap} when the coefficient $a$ is periodic, that is when $\Tilde{a}=0$. For every $f\in \mathcal{A}^p$, we prove the existence and uniqueness of a solution $u$ to :
\begin{equation}
\label{equation_coeff_periodique_A_P}
 -\operatorname{div}(a_{per} \nabla u) = \operatorname{div}(f) \quad \text{on } \mathbb{R}^d,   
\end{equation}
such that $\nabla u \in \left(\mathcal{A}^p\right)^d$. Adapting a method introduced in \cite{blanc2018correctors}, this result is the first step to study \eqref{equation_generale_Ap}. We begin with existence of the solution.

\begin{prop}
\label{existence_cas_periodique_AP}
Assume $a_{per}$ satisfies \eqref{hypothèse_a_elliptic_AP} and \eqref{hypothèse_a_holder_AP}. Let $f\in \left(\mathcal{A}^p\right)^d$ for $p\in]1,d[$. Then, there exists a solution $u \in L^1_{loc}(\mathbb{R}^d)$ to \eqref{equation_coeff_periodique_A_P}
such that $\nabla u \in \left(\mathcal{A}^p\right)^d$. In addition, there exists a constant $C_1>0$ independent of $u$ and $f$ such that 
\begin{equation}
\label{estim_per_AP}
    \|\nabla u\|_{\mathcal{A}^p}  \leq C_1 \|f \|_{\mathcal{A}^p}.
\end{equation}
If we additionally assume that $f \in \left(L^{\infty}(\mathbb{R}^d)\right)^d$, then $\nabla u \in \left(L^2_{unif}(\mathbb{R}^d)\right)^d$ and there exists $C_2>0$ independent of $f$ and $u$ such that : 
\begin{equation}
\label{estim_per_unif_AP}
    \|\nabla u\|_{L^2_{unif}(\mathbb{R}^d)} \leq C_2 \left(\|f\|_{\mathcal{E}^p} + \|f\|_{L^{\infty}(\mathbb{R}^d)}\right).
\end{equation}
\end{prop}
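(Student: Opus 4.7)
The plan is to first establish the existence and the $\mathcal{A}^p$-estimate for smoother data, namely $f \in (L^p(\mathbb{R}^d) \cap \mathcal{A}^p)^d$, and then to extend to general $f \in (\mathcal{A}^p)^d$ by density using Corollary \ref{densité_Ap}. For $f \in (L^p(\mathbb{R}^d))^d$, the Avellaneda--Lin $L^p$-theory for divergence-form operators with periodic H\"older-continuous coefficients produces, under \eqref{hypothèse_a_elliptic_AP}--\eqref{hypothèse_a_holder_AP}, a solution $u \in L^1_{loc}(\mathbb{R}^d)$ to \eqref{equation_coeff_periodique_A_P}, unique up to a constant, with $\nabla u \in (L^p(\mathbb{R}^d))^d$ and $\|\nabla u\|_{L^p} \leq C\|f\|_{L^p}$ for a constant $C$ depending only on $a_{per}$, $d$ and $p$.

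The main step is then the $\mathcal{A}^p$-continuity $\|\nabla u\|_{\mathcal{A}^p} \leq C\|f\|_{\mathcal{A}^p}$. The crucial observation is that the $\mathbb{Z}^d$-periodicity of $a_{per}$ makes the equation translation-invariant: for each $i$, the finite difference $\delta_i u$ solves $-\operatorname{div}(a_{per} \nabla(\delta_i u)) = \operatorname{div}(\delta_i f)$ on $\mathbb{R}^d$, so the same $L^p$-estimate yields $\|\delta_i \nabla u\|_{L^p} \leq C\|\delta_i f\|_{L^p}$ and therefore controls the discrete-gradient part of the $\mathcal{A}^p$-norm. For the local-averaging part, I would apply Proposition \ref{decomposition_fonctions_A_p} to $\nabla u \in (\mathbf{A}^p)^d$: this produces a decomposition $\nabla u = w_{per} + \widetilde{w}$ with $w_{per}$ periodic and $\widetilde{w} \in (\mathcal{E}^p)^d$ satisfying $\|\widetilde{w}\|_{\mathcal{E}^p} \leq C\|\delta \nabla u\|_{L^p}$. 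The periodic piece must vanish: since $\nabla u \in L^p(\mathbb{R}^d)$, H\"older gives $\mathcal{M}(|\nabla u|)(x) \to 0$ as $|x|\to\infty$; since $\mathcal{M}(|\widetilde{w}|) \in L^{p^*}(\mathbb{R}^d)$, it also tends to $0$ at infinity at least along a sequence; the triangle inequality then forces the constant $\mathcal{M}(|w_{per}|)$ to be zero, so $w_{per} = 0$ almost everywhere. Hence $\nabla u \in (\mathcal{E}^p)^d$ and \eqref{estim_per_AP} follows.

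To handle a general $f \in (\mathcal{A}^p)^d$, I would approximate it by $f_n \in (L^p \cap \mathcal{A}^p)^d$ with $f_n \to f$ in $\mathcal{A}^p$ (Corollary \ref{densité_Ap}) and apply the previous construction to each $f_n$. By linearity and the estimate just established, $(\nabla u_n)$ is a Cauchy sequence in $(\mathcal{A}^p)^d$ with limit $v$. Proposition \ref{sous-suite} upgrades this to $L^1_{loc}$-convergence of $\nabla u_n$ to $v$ along a subsequence, ensuring that $v$ is a gradient in the distributional sense; a Poincar\'e-based diagonal extraction over balls of increasing radius recovers $u \in L^1_{loc}(\mathbb{R}^d)$ with $\nabla u = v$, and passing to the limit in the weak formulation gives \eqref{equation_coeff_periodique_A_P}. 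The $\mathcal{A}^p$-bound on $\nabla u$ is inherited from the estimate on $\nabla u_n$ by lower semi-continuity of the $\mathcal{A}^p$-norm.

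The last ingredient is the $L^2_{unif}$-bound when $f \in (L^\infty(\mathbb{R}^d))^d$. For fixed $x_0$, I would introduce a smooth cutoff $\eta_{x_0}$ supported in $B_4(x_0)$ and equal to $1$ on $B_2(x_0)$, split $f = f\eta_{x_0} + f(1 - \eta_{x_0})$, and decompose $u = u^{near} + u^{far}$ accordingly. The near part satisfies a classical $L^2$-energy estimate: $f\eta_{x_0}$ is compactly supported and bounded, so $\|\nabla u^{near}\|_{L^2(\mathbb{R}^d)} \leq C \|f\|_{L^\infty}$. For the far part, the periodic Green function representation $\nabla u^{far}(x) = \int \nabla_x\nabla_y G_{per}(x,y)\, f(y)(1-\eta_{x_0}(y))\, dy$ combined with the Avellaneda--Lin pointwise bound $|\nabla_x\nabla_y G_{per}(x,y)| \leq C|x-y|^{-d}$ yields, for $x \in B_1(x_0)$, a discretised bound of the form $|\nabla u^{far}(x)| \leq C \sum_{k \in \mathbb{Z}^d} \mathcal{M}(|f|)(k)/(1+|x_0-k|)^d$; H\"older with conjugate exponents $p^*$ and $(p^*)'$ controls this by $C\|f\|_{\mathcal{E}^p}$, the geometric sum converging precisely because $d(p^*)' > d$. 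Taking the supremum over $x_0$ then gives \eqref{estim_per_unif_AP}. I expect the most delicate points to be this far-field estimate --- correctly converting the continuous $\mathcal{E}^p$ norm into a discrete $\ell^{p^*}$ sum while absorbing the $|x-y|^{-d}$ kernel through the dual exponent --- and the reconstruction of $u$ from its gradient at the density step.
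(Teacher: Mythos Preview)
Your strategy is essentially the paper's: approximate $f$ by $f_n\in (L^p)^d$ via Corollary~\ref{densité_Ap}, solve using the Avellaneda--Lin $L^p$ theory, exploit the $\mathbb{Z}^d$-periodicity of $a_{per}$ to commute $\delta_i$ with the operator, and pass to the limit in $\mathcal{A}^p$; for the $L^2_{unif}$ bound, split into a near part controlled by $\|f\|_{L^\infty}$ and a far part controlled through the Green-function pointwise decay \eqref{estimgreen2} and the $\mathcal{E}^p$ norm of $f$ via H\"older with exponent $(p^*)'$. Two differences are worth flagging.

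First, to obtain $\nabla u_n\in(\mathcal{E}^p)^d$ you invoke Proposition~\ref{decomposition_fonctions_A_p} and then argue that the periodic piece vanishes. This is correct but roundabout: since $\nabla u_n\in(L^p)^d$, Proposition~\ref{Corollaire_GNS_LP} gives $\|\mathcal{M}(|\nabla u_n|)\|_{L^{p^*}}\leq C\|\delta\nabla u_n\|_{L^p}$ in one stroke, which is exactly what the paper does.

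Second, and more substantively, for the $L^2_{unif}$ estimate you work directly with the limiting $f$ and assert the Green representation $\nabla u^{far}(x)=\int\nabla_x\nabla_y G_{per}(x,y)\,f(y)(1-\eta_{x_0}(y))\,dy$. Your subsequent bound does show this integral converges absolutely, but you still need to identify it with $\nabla(u-u^{near})$, where $u$ is the solution you built by density; that identification requires a uniqueness statement in $\mathcal{A}^p$ (Lemma~\ref{unicité_periodique_AP}) which you have not yet invoked. The paper sidesteps this by carrying out the near/far split at the level of the approximants $f_n\in(L^p)^d$---where the Avellaneda--Lin theory supplies both the representation and uniqueness---noting that Corollary~\ref{densité_Ap} also provides $\|f_n\|_{L^\infty}\leq 2\|f\|_{L^\infty}$, obtaining $\|\nabla u_n\|_{L^2(B_1(x_0))}\leq C(\|f\|_{\mathcal{E}^p}+\|f\|_{L^\infty})$ uniformly in $n$ and $x_0$, and then passing to the limit by weak-$L^2_{loc}$ lower semicontinuity. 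Your approach can be repaired the same way with no change to the underlying estimates.
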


We will need to introduce the Green function $G_{per}$ associated with  $-\operatorname{div}(a_{per} \nabla.)$ on $\mathbb{R}^d$ defined as the unique solution to 
\begin{equation}
    \label{defGreen_rappel}
    \left\{ 
\begin{array}{cc}
   - \operatorname{div}_x \left(a_{per}(x) \nabla_x G_{per}(x,y) \right) = \delta_y (x)  &  \text{in } \mathcal{D}'(\mathbb{R}^d), \\
    \displaystyle \lim_{|x-y| \rightarrow \infty} G_{per}(x,y) = 0. &
\end{array}  
\right.
\end{equation}
In order to define a solution to \eqref{equation_coeff_periodique_A_P}, we will use several pointwise estimates established in \cite[Section 2]{avellaneda1991lp} and satisfied by $G_{per}$ on the whole space $\mathbb{R}^d$. Indeed, we know there exist $C_1>0$ and $C_2>0$ such that for every $x,y \in \mathbb{R}^d$ with $x\neq y$, it holds :
\begin{align}
\label{estimgreen1}
|\nabla_y G_{per}(x,y)| & \leq C_1 \dfrac{1}{|x-y|^{d-1}},\\
\label{estimgreen2}
|\nabla_x \nabla_y G_{per}(x,y)| & \leq C_3 \dfrac{1}{|x-y|^{d}}.
\end{align}

\begin{proof}

\textbf{Step 1 : Existence of a solution}. Corollary \ref{densité_Ap} gives the existence of a sequence $(f_n)_{n \in \mathbb{N}}$ of functions in $\left(L^p(\mathbb{R}^d)\right)^d$ that converges to $f$ in $\left(\mathcal{A}^p\right)^d$. For every $n \in \mathbb{N}$, the results of \cite{avellaneda1991lp} establish the existence of a solution, unique up to an additive constant, $u_n$ in $L^1_{loc}(\mathbb{R}^d)$ to :
\begin{equation}
\label{equation_periodique_L2}
  - \operatorname{div}(a_{per} \nabla u_n) = \operatorname{div}(f_n) \quad \text{on } \mathbb{R}^d,  
\end{equation}
and such that $\nabla u_n \in \left(L^p(\mathbb{R}^d)\right)^d$.
In addition, using the periodicity of $a_{per}$, we apply the operator $\delta_i$ to equation \eqref{equation_periodique_L2}, and we obtain $- \operatorname{div}(a_{per} \delta_i\nabla u_n) = \operatorname{div}(\delta_if_n)$ for every $i \in \{1,...,d\}$.
Since $\delta_i \nabla u_n$ and $\delta_i f_n$ both belong to $\left(L^p(\mathbb{R}^d)\right)^d$, the continuity result of \cite[Theorem A]{avellaneda1991lp} yields the existence of a constant $C_1>0$ independent of $n$ such that $\|\delta \nabla u_n \|_{L^p(\mathbb{R}^d)} \leq C_1 \| \delta f_n \|_{L^p(\mathbb{R}^d)}$.
From Proposition \ref{Corollaire_GNS_LP} we infer the existence of $C_2>0$ independent of $n$ such that : 
\begin{equation}
\label{estimée_suite_approchée}
    \| \nabla u_n \|_{\mathcal{A}^p} \leq C_2 \|\delta \nabla u_n \|_{L^p(\mathbb{R}^d)} \leq C_1C_2 \| \delta f_n \|_{L^p(\mathbb{R}^d)}.
\end{equation}
Likewise, for every $m,n \in \mathbb{N}$, the function $u_n - u_m$ is a solution to $- \operatorname{div}(a_{per}(\nabla u_n - \nabla u_m)) = \operatorname{div}(f_n - f_m)$.
Since $f_n - f_m$ and $\nabla(u_n - u_m)$ both belong to $\left(L^p(\mathbb{R}^d)\right)^d$, we similarly obtain : 
$$\|\nabla u_n - \nabla u_m \|_{\mathcal{A}^p}\leq C_1C_2 \|\delta f_n - \delta f_m\|_{L^p(\mathbb{R}^d)}.$$
Since $(f_n)_{n \in \mathbb{N}}$ converges to $f$ in $\mathcal{A}^p$, it is a Cauchy sequence in this space and the previous inequality shows $(\nabla u_n)_{n \in \mathbb{N}}$ is also a Cauchy sequence in $\left(\mathcal{A}^p\right)^d$. We therefore obtain the existence of $T \in \left(\mathcal{A}^p\right)^d$ such that $\nabla u_n$ converges to $T$ in $\left(\mathcal{A}^p\right)^d$. Using Proposition \ref{sous-suite}, we have that,  up to an extraction, $\nabla u_n$ also converges to $T$ in $\left(L^1_{loc}(\mathbb{R}^d)\right)^d$
and the Schwarz Lemma shows the existence of $u \in L^1_{loc}(\mathbb{R}^d)$ such that $T = \nabla u$. 
Finally, taking the limit when $n\to \infty$ in \eqref{equation_periodique_L2} and \eqref{estimée_suite_approchée}, we obtain that $u$ is a solution to \eqref{equation_coeff_periodique_A_P} such that : 
$$\|\nabla u \|_{\mathcal{A}^p} \leq C_2 \|\delta f \|_{L^p(\mathbb{R}^d)} \leq C_2 \|f\|_{\mathcal{A}^p}.$$ 

\textbf{Step 2 : Proof of estimate \eqref{estim_per_unif_AP}}. We now additionally assume that $f \in \left(L^{\infty}(\mathbb{R}^d)\right)^d$. Corollary \ref{densité_Ap} gives the existence of a sequence $(f_n)_{n \in \mathbb{N}}$ of $\left(L^p(\mathbb{R}^d)\right)^d$ such that $f_n$ converges to $f$ in $\left(\mathcal{A}^p\right)^d$ and such that for every $n\in \mathbb{N}$, we have :
\begin{equation}
\label{majoration_unif_fn}
  \|f_n\|_{L^{\infty}(\mathbb{R}^d)} \leq 2 \|f\|_{L^{\infty}(\mathbb{R}^d)}.  
\end{equation}
Exactly as in step 1, we denote by $u_n$, the unique solution (up to an additive constant) to \eqref{equation_periodique_L2} such that $\nabla u_n \in \left(L^p(\mathbb{R}^d)\right)^d$.  
We fix $x_0 \in \mathbb{R}^d$ and our aim is to show that the norm of $\nabla u_n$ in $L^2(B_1(x_0))$ is uniformly bounded with respect to $n$ and $x_0$. We begin by splitting $\nabla u_n$ in two parts. We write  $\nabla u_n = \nabla u_{n,1} + \nabla u_{n,2}$ where $u_{n,1}$ is the unique solution (up to an additive constant) to $- \operatorname{div}(a_{per}\nabla u_{n,1}) = \operatorname{div}\left(f_n 1_{B_{4 \sqrt{d}}(x_0)}\right)$ on $\mathbb{R}^d$,
such that $\nabla u_{n,1}\in \left(L^2(\mathbb{R}^d)\cap L^p(\mathbb{R}^d)\right)^d$ and $u_{n,2}$ is the unique solution (again up to an additive constant) to $- \operatorname{div}(a_{per}\nabla u_{n,2}) = \operatorname{div}\left(f_n\left(1- 1_{B_{4 \sqrt{d}}(x_0)}\right)\right)$ on $\mathbb{R}^d$
such that $\nabla u_{n,2}\in \left(L^p(\mathbb{R}^d)\right)^d$.
Since $f_n 1_{B_{4 \sqrt{d}}(x_0)}$ belongs to $\left(L^2(\mathbb{R}^d) \cap L^p(\mathbb{R}^d)\right)^d$, the existence of $u_{n,1}$ is established in \cite[Theorem A]{avellaneda1991lp}, and we have the existence of a constant $C_1>0$ independent of $n$ and $x_0$ such that : 
\begin{equation}
\label{L2_estimate_nabla_u_n_delta_a}
    \|\nabla u_{n,1}\|_{L^2(\mathbb{R}^d)} \leq C_1 \|f_n 1_{B_{4 \sqrt{d}}(x_0)}\|_{L^2(\mathbb{R}^d)}. 
\end{equation}
Similarly, since $f_n\left(1- 1_{B_{4 \sqrt{d}}(x_0)}\right)$ belongs to $\left(L^p(\mathbb{R}^d)\right)^d$, the existence of $u_{n,2}$ is also given in \cite[Theorem A]{avellaneda1991lp} and we have
\begin{equation}
  \nabla u_{n,2} = \int_{\mathbb{R}^d} \nabla_x \nabla_y G_{per}(.,y)\left(f_n\left(1- 1_{B_{4 \sqrt{d}}(x_0)}\right)\right)(y)dy.  
\end{equation}
We note that the equality $\nabla u_n = \nabla u_{n,1} + \nabla u_{n,2}$ holds as a consequence of the uniqueness of a solution to \eqref{equation_periodique_L2} with a gradient in $\left( L^p(\mathbb{R}^d) \right)^d$. 
We next respectively estimate the norm of $\nabla u_{n,1}$ and $\nabla u_{n,2}$ in $L^2(B_1(x_0))$.
First, using \eqref{L2_estimate_nabla_u_n_delta_a}, we have 
\begin{equation}
    \label{majoration_L2_Un1}
     \|\nabla u_{n,1}\|_{L^2(B_1(x_0))} \leq C_1 \left\|f_n 1_{B_{4 \sqrt{d}}(x_0)}\right\|_{L^2(\mathbb{R}^d)}\leq C_1 |B_{4 \sqrt{d}}|^{1/2} \|f_n\|_{L^{\infty}(\mathbb{R}^d)}.
\end{equation}

In order to estimate the $L^2$-norm of $\nabla u_{n,2}$, we use the behavior \eqref{estimgreen2} of the Green function $G_{per}$ and we obtain the existence of a constant $C>0$ such that for every $x \in B_{1}(x_0)$, we have 
$$|\nabla u_{n,2}(x)| \leq C \int_{\mathbb{R}^d\setminus{B_{4 \sqrt{d}}(x_0)}} \frac{1}{|x-y|^d} |f_n(y)| dy.$$
Since $|Q|=1$, using a change of variables, we have
\begin{align*}
I(x):= \int_{\mathbb{R}^d \setminus{B_{4\sqrt{d}}(x_0)}} \frac{1}{|x-y|^d} |f_n(y)| dy 
    & = \int_Q \int_{\mathbb{R}^d \setminus{B_{4\sqrt{d}}(z +x-x_0)}} \frac{1}{|y+z|^d} |f_n(x-y-z)| dy dz.
\end{align*}
We note that for every $z \in Q$, $|z|\leq \sqrt{d}$ and $|x-x_0|\leq 1\leq \sqrt{d}$, and it follows
$$I(x) \leq \int_Q \int_{\mathbb{R}^d \setminus{B_{2\sqrt{d}}}} \frac{1}{|y+z|^d} |f_n(x-y-z)| dy dz.$$
Next, for every $y \in \mathbb{R}^d\setminus{B_{2\sqrt{d}}}$, we have $\displaystyle |z| \leq \frac{1}{2} |y|$ and we use a triangle inequality to deduce $|z + y| \geq |y|-|z| \geq \frac{1}{2} |y|$, and 
\begin{align*}
    \int_Q \int_{\mathbb{R}^d \setminus{B_{2\sqrt{d}}}} \frac{1}{|y+z|^d} |f_n(x-y-z)| dy dz 
    & \leq 2^d \int_{\mathbb{R}^d \setminus{B_{2\sqrt{d}}}} \frac{1}{|y|^d} \int_{Q+y} |f_n(x-z)| dz dy.
\end{align*}
We use the H\"older inequality and obtain, 
\begin{align*}
  I(x) 
    & \leq  2^d \left(\int_{\mathbb{R}^d\setminus{B_{2 \sqrt{d}}}} \frac{1}{|y|^{(p^*)' d}}dy \right)^{1/(p^*)'} \|\mathcal{M}(|f_n|)\|_{L^{p^*}(\mathbb{R}^d)}.
\end{align*}
Here, we have denoted by $(p^*)'$ the conjugate Lebesgue exponent associated with $p^*$.
We have finally proved that for every  $x \in B_1(x_0)$, 
\begin{equation}
\label{inegalite_u_n_2_delta_a}
    |\nabla u_{n,2}(x)| \leq C I(x) \leq A \|f_n\|_{\mathcal{E}^p},
\end{equation}
where $\displaystyle A = C 2^d \left(\int_{\mathbb{R}^d\setminus{B_{2 \sqrt{d}}}} \frac{1}{|y|^{(p^*)' d}}dy \right)^{1/(p^*)'}$ is clearly independent of $x_0$ and $f$. We integrate \eqref{inegalite_u_n_2_delta_a} on $B_1(x_0)$ and we obtain the existence of a constant $C_2>0$, independent of $n$, $x_0$ and $f$, and such that :
\begin{equation}
\label{majoration_unif_L2_un2}
    \|\nabla u_{n,2}\|_{L^2(B_1(x_0))} \leq C_2 \|f_n\|_{\mathcal{E}^p}.
\end{equation}
Since $\nabla u_n = \nabla u_{n,1} + \nabla u_{n,2}$, we use \eqref{majoration_L2_Un1} and  \eqref{majoration_unif_L2_un2} : 
$$\|\nabla u_{n}\|_{L^2(B_1(x_0))} \leq \|\nabla u_{n,1}\|_{L^2(B_1(x_0))} + \|\nabla u_{n,2}\|_{L^2(B_1(x_0))} \leq C \left(\|f_n\|_{L^{\infty}(\mathbb{R}^d)} + \|f_n\|_{\mathcal{E}^p}\right),$$
where $C>0$ is independent of $n$, $x_0$ and $f$. 
Since the previous inequality holds for every $x_0 \in \mathbb{R}^d$, $\nabla u_n$ is bounded in $\left(L^2_{loc}(\mathbb{R}^d)\right)^d$ and, up to an extraction, it weakly converges to a function $v \in \left(L^2_{loc}(\mathbb{R}^d)\right)^d$. We recall that $\nabla u_n$ also converges to $\nabla u$ in $\left(L^1_{loc}(\mathbb{R}^d)\right)^d$, it follows that $v = \nabla u$. In addition the $L^2$ norm being lower semi-continuous, for every $x_0 \in \mathbb{R}^d$ we obtain : 
$$ \|\nabla u\|_{L^2(B_1(x_0))} \leq \liminf_{n \to \infty} \|\nabla u_n\|_{L^2(B_1(x_0))}\leq C \left(\|f\|_{L^{\infty}(\mathbb{R}^d)} + \|f\|_{\mathcal{E}^p}\right).$$
We finally take the supremum over all the points $x_0 \in \mathbb{R}^d$ and we obtain \eqref{estim_per_unif_AP}. 
\end{proof}

\begin{remark}
\label{remark_counter_p1}
For $p=1$, estimate \eqref{estim_per_AP} stated in Proposition \ref{existence_cas_periodique_AP} does not hold. For $d=2$ say, we can consider $a_{per}=I_2$ and, $ \displaystyle f(x):= \left(-\dfrac{x_2}{|x|^2\ln(|x|)^2}\arctan\left(\frac{x_2}{x_1}\right), \dfrac{x_1}{|x|^2\ln(|x|)^2}\arctan\left(\frac{x_2}{x_1}\right) \right)$, where we have denoted $x= (x_1, x_2)$.
A solution to \eqref{equation_coeff_periodique_A_P} is $u(x) = \ln(\ln(|x|))$. When $|x| \to \infty$  we can show that $|f(x)| = O\left(\dfrac{1}{|x| \ln(|x|)^2}\right)\in L^2(\mathbb{R}^2) = L^{1^*}(\mathbb{R}^2)$ and
$$|\delta f_1(x)| = O\left(\dfrac{1}{|x|^2 \ln(|x|)^2}\right)\in L^1(\mathbb{R}^2), \qquad   |\delta f_2(x)| = O\left(\dfrac{1}{|x|^2 \ln(|x|)^2}\right) \in L^1(\mathbb{R}^2). $$
Consequently $f$ belongs to $\left(\mathcal{A}^1\right)^2$. However $|\delta \partial_1 u(x_1,x_2)| \sim \dfrac{C}{|x|^2\ln{|x|}}\notin L^1(\mathbb{R}^2)$ and $\nabla u$ does not belong to $\left(\mathcal{A}^1\right)^2$. This is of course related to the fact that the operator $-\nabla \Delta^{-1} \operatorname{div}$ is not continuous from $\left(L^1(\mathbb{R}^d)\right)^d$ to $\left(L^1(\mathbb{R}^d)\right)^d$. We only have continuity from $L^1$ to $\operatorname{weak}-L^1$ (see \cite[Section 7.3]{meyer1997wavelets} for the details). 
\end{remark}

\begin{remark}
The property $f \in \left(\mathcal{E}^p\right)^d$ in \eqref{estim_per_unif_AP} is required to obtain the uniform estimate in $L^2_{unif}(\mathbb{R}^d)$ satisfied by $\nabla u$. When $f$ only belongs to $\left(L^{\infty}(\mathbb{R}^d)\right)^d$, \eqref{equation_coeff_periodique_A_P} may possibly have no solution with a gradient in $L^2_{unif}(\mathbb{R}^d)$. Consider indeed for $d=2$, $f(x) = f(x_1,x_2) = \left(-2 \dfrac{x_1^2}{|x|^2}, -2 \dfrac{x_1x_2}{|x|^2}\right) \in \left(L^{\infty}(\mathbb{R}^2)\right)^2$. Then $u(x) =  x_1\ln(|x|)$ satisfies $-\Delta u = \operatorname{div}(f)$, while $\nabla u \notin \left(L^2_{unif}(\mathbb{R}^2)\right)^2$ and has logarithmic growth.
\end{remark}

We next deal with uniqueness of the solution.

\begin{lemme}
\label{unicité_periodique_AP}
Assume $a_{per}$ satisfies \eqref{hypothèse_a_elliptic_AP} and \eqref{hypothèse_a_holder_AP}. Let $u \in L^1_{loc}(\mathbb{R}^d)$ be a solution in $\mathcal{D}'(\mathbb{R}^d)$ to : 
\begin{equation}
\label{equation_unicite_per}
    - \operatorname{div}(a_{per} \nabla u) = 0 \quad \text{on } \mathbb{R}^d,
\end{equation}
such that $\nabla u \in \left(\mathcal{A}^p\right)^d$ for $p\in ]1,d[$. Then $\nabla u = 0$. 
\end{lemme}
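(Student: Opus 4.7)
The plan is to exploit the periodicity of $a_{per}$ to commute the equation with the discrete gradient $\delta$, reducing the problem to a uniqueness result for gradients in $L^p$, and then to use the integrability of $\mathcal{M}(|\nabla u|)$ to conclude.

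First, for every $i\in\{1,\dots,d\}$, I would apply the translation $\tau_{e_i}: v\mapsto v(\cdot+e_i)$ to equation \eqref{equation_unicite_per}. Since $a_{per}$ is $Q$-periodic, a change of variables in the distributional formulation gives $-\operatorname{div}(a_{per}\,\nabla(\tau_{e_i}u))=0$ on $\mathbb{R}^d$, and subtracting \eqref{equation_unicite_per} yields
\begin{equation}
-\operatorname{div}\bigl(a_{per}\,\nabla(\delta_i u)\bigr)=0 \quad \text{on }\mathbb{R}^d.
\end{equation}
Because $\delta_i$ commutes with $\nabla$, we have $\nabla(\delta_i u)=\delta_i(\nabla u)$, and the assumption $\nabla u\in(\mathcal{A}^p)^d$ implies in particular that $\delta_i(\nabla u)\in(L^p(\mathbb{R}^d))^{d}$ for every $i$.

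Second, I would invoke the uniqueness part of the $L^p$ theory for periodic divergence-form operators (the result of \cite[Theorem~A]{avellaneda1991lp} already used in Proposition \ref{existence_cas_periodique_AP}), which guarantees that any distributional solution of $-\operatorname{div}(a_{per}\nabla v)=0$ with $\nabla v\in (L^p(\mathbb{R}^d))^d$ (for $1<p<\infty$) must satisfy $\nabla v=0$. Applied to $v=\delta_i u$ for each $i$, this gives $\delta_i(\nabla u)=0$ in $(L^p(\mathbb{R}^d))^d$, i.e.\ $\nabla u$ is $Q$-periodic.

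Third, I would combine this periodicity with the integrability assumption. Since $\nabla u$ is $Q$-periodic, so is $|\nabla u|$, and then $\mathcal{M}(|\nabla u|)(x)=\int_{Q+x}|\nabla u|$ is itself $Q$-periodic (by a straightforward change of variables), hence constant. On the other hand, $\nabla u\in(\mathcal{A}^p)^d$ means $\mathcal{M}(|\nabla u|)\in L^{p^*}(\mathbb{R}^d)$, and the only constant lying in $L^{p^*}(\mathbb{R}^d)$ is $0$. Therefore $\mathcal{M}(|\nabla u|)\equiv 0$, which forces $\nabla u=0$ almost everywhere.

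The only conceptual subtlety is the first step: one must verify that the distributional equation passes correctly to the shifted function, which is where the $Q$-periodicity of $a_{per}$ is essential; everything else is a direct application of the previously established $L^p$ theory and of the basic integrability properties of $\mathcal{A}^p$ encoded in the definition of $\mathcal{M}$.
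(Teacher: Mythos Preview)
Your proof is correct and follows essentially the same route as the paper: apply $\delta_i$ to the equation using the periodicity of $a_{per}$, invoke $L^p$ uniqueness to obtain $\delta_i(\nabla u)=0$, hence $\nabla u$ is periodic, and then use $\mathcal{M}(|\nabla u|)\in L^{p^*}$ to force $\nabla u=0$. The only cosmetic difference is that the paper cites \cite[Proposition~2.1]{blanc2018correctors} for the $L^p$ uniqueness step rather than \cite{avellaneda1991lp}; the latter provides the continuity estimate from which uniqueness follows, so your citation is also acceptable.
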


\begin{proof}
For every $i \in \{1,...,d\}$, we consider a translation by $e_i$ of equation \eqref{equation_unicite_per} and we subtract it from the original equation. The periodicity of $a_{per}$ implies
$ - \operatorname{div}(a_{per}  \nabla \delta_i u) = 0$. Since we have assumed $\nabla u \in \left(\mathcal{A}^p\right)^d$, we know that $\nabla \delta_i  u$ belongs to $\left(L^p(\mathbb{R}^d)\right)^d$ and the uniqueness result established in \cite[Proposition 2.1]{blanc2018correctors} for solutions with gradient in $ \left(L^p(\mathbb{R}^d)\right)^d$ therefore implies that $\nabla \delta_i u  = 0 $. It follows that $\nabla u$ is  $Q$-periodic and, consequently, the function $\mathcal{M}(|\nabla u|)$ is constant. Since by assumption $\mathcal{M}\left(\left|\nabla u\right| \right)$ belongs to $L^{p^*}(\mathbb{R}^d)$, we obtain $\mathcal{M}\left(\left|\nabla u\right| \right) = 0$ which shows that $\nabla u = 0$. 
\end{proof}


\begin{corol}
\label{régularité_AP_coeff_periodique}
Let $p\in ]1,d[$. There exists $C>0$ such that for every $f\in \left(\mathcal{A}^p\cap \mathcal{C}^{0,\alpha}(\mathbb{R}^d)\right)^d$ and $u$ solution to \eqref{equation_coeff_periodique_A_P}
such that $\nabla u \in \left(\mathcal{A}^p\right)^d$, we have : 
\begin{equation}
    \|\nabla u\|_{\mathcal{C}^{0, \alpha}(\mathbb{R}^d)} \leq C \left(\| \mathcal{M}(|f|)\|_{L^{p^*}(\mathbb{R}^d)} +  \|f\|_{\mathcal{C}^{0, \alpha}(\mathbb{R}^d)}\right). 
\end{equation}
\end{corol}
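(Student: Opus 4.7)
The plan is to combine the interior regularity estimate of Proposition \ref{regularite_A_P} with the $L^2_{unif}$ bound from the second part of Proposition \ref{existence_cas_periodique_AP}. Since $f\in \left(\mathcal{A}^p \cap \mathcal{C}^{0,\alpha}(\mathbb{R}^d)\right)^d$, in particular $f \in \left(L^\infty(\mathbb{R}^d)\right)^d$ with $\|f\|_{L^\infty(\mathbb{R}^d)} \leq \|f\|_{\mathcal{C}^{0,\alpha}(\mathbb{R}^d)}$, which is precisely the integrability required to apply the $L^2_{unif}$ estimate.

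Concretely, first I would invoke Proposition \ref{regularite_A_P} to obtain the existence of $C>0$ such that
\begin{equation*}
\|\nabla u\|_{\mathcal{C}^{0,\alpha}(\mathbb{R}^d)} \leq C\Bigl(\|\mathcal{M}(|\nabla u|)\|_{L^2_{unif}(\mathbb{R}^d)} + \|f\|_{\mathcal{C}^{0,\alpha}(\mathbb{R}^d)}\Bigr).
\end{equation*}
The next step is to compare $\|\mathcal{M}(|\nabla u|)\|_{L^2_{unif}(\mathbb{R}^d)}$ with $\|\nabla u\|_{L^2_{unif}(\mathbb{R}^d)}$. For any $y\in \mathbb{R}^d$, the Cauchy--Schwarz inequality together with $|Q|=1$ yields $\mathcal{M}(|\nabla u|)(y) \leq \|\nabla u\|_{L^2(Q+y)}$. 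Covering $Q+y$ by a bounded number of unit balls centered in a neighborhood of $y$ (independent of $y$, using that $\mathrm{diam}(Q)=\sqrt{d}$), one obtains a constant $K=K(d)>0$ such that $\mathcal{M}(|\nabla u|)(y) \leq K\|\nabla u\|_{L^2_{unif}(\mathbb{R}^d)}$ uniformly in $y$, hence $\|\mathcal{M}(|\nabla u|)\|_{L^2_{unif}(\mathbb{R}^d)} \leq K|B_1|^{1/2}\|\nabla u\|_{L^2_{unif}(\mathbb{R}^d)}$.

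Finally, applying estimate \eqref{estim_per_unif_AP} from Proposition \ref{existence_cas_periodique_AP}, which is licit because $f \in \left(\mathcal{A}^p\cap L^\infty(\mathbb{R}^d)\right)^d$ and $u$ is a solution to \eqref{equation_coeff_periodique_A_P} with $\nabla u \in \left(\mathcal{A}^p\right)^d$, gives
\begin{equation*}
\|\nabla u\|_{L^2_{unif}(\mathbb{R}^d)} \leq C_2\bigl(\|f\|_{\mathcal{E}^p} + \|f\|_{L^\infty(\mathbb{R}^d)}\bigr) \leq C_2\bigl(\|\mathcal{M}(|f|)\|_{L^{p^*}(\mathbb{R}^d)} + \|f\|_{\mathcal{C}^{0,\alpha}(\mathbb{R}^d)}\bigr).
\end{equation*}
Chaining the three inequalities yields the desired bound with a new constant depending only on $p$, $\alpha$, $d$ and the coefficient $a_{per}$.

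No step is really delicate here; the work is essentially bookkeeping. The only minor point to be careful with is the constant in the comparison of $\mathcal{M}(|\nabla u|)$ to $\nabla u$ through $L^2_{unif}$, which relies on the elementary geometric observation about covering $Q+y$ by unit balls and does not use any information specific to $u$ or $f$.
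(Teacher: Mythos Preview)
Your approach is the same as the paper's: combine Proposition~\ref{regularite_A_P} with estimate~\eqref{estim_per_unif_AP}. There is, however, one point you gloss over. Proposition~\ref{regularite_A_P} requires $\nabla u \in (\mathcal{C}^{0,\alpha}(\mathbb{R}^d))^d$ as a \emph{hypothesis}, and estimate~\eqref{estim_per_unif_AP} is stated only for the \emph{particular} solution constructed in Proposition~\ref{existence_cas_periodique_AP}, not for an arbitrary solution with $\nabla u \in (\mathcal{A}^p)^d$. The paper closes both gaps at once by first invoking the uniqueness Lemma~\ref{unicité_periodique_AP}: any solution $u$ to~\eqref{equation_coeff_periodique_A_P} with $\nabla u \in (\mathcal{A}^p)^d$ must agree (up to an additive constant) with the one built in Proposition~\ref{existence_cas_periodique_AP}, hence $\nabla u \in (L^2_{unif}(\mathbb{R}^d))^d$ and estimate~\eqref{estim_per_unif_AP} is licit; local elliptic regularity then gives $\nabla u \in (\mathcal{C}^{0,\alpha}(\mathbb{R}^d))^d$, after which Proposition~\ref{regularite_A_P} may be applied. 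Once you insert this one-line appeal to uniqueness at the beginning, your argument is complete and identical to the paper's.
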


\begin{proof}
Using both Proposition \ref{existence_cas_periodique_AP} and the uniqueness of Lemma \ref{unicité_periodique_AP}, we know that $\nabla u$ belongs to $\left(L^2_{unif}(\mathbb{R}^d)\right)^d$ and the elliptic regularity theory (see for instance the results of \cite[Theorem 5.19 p.87]{MR3099262}) implies that $\nabla u \in \left(\mathcal{C}^{0,\alpha}(\mathbb{R}^d)\right)^d.$ The inequality is therefore a direct consequence of Proposition~\ref{regularite_A_P} and estimate \eqref{estim_per_unif_AP}.
\end{proof}

In the sequel of this section, we study the specific case where $1<p<\frac{d}{2}$, that is when $p^*<d$. We can then show some additional properties satisfied by $u$ solution to \eqref{equation_coeff_periodique_A_P}. We successively show, respectively in Lemma \ref{Lemme_forme_u_per_ap} and Lemma \ref{lemme_borne_u_per_ap} that, up to an additive constant $u$ belongs to $\mathcal{E}^{p^*}$ and it is uniformly bounded as soon as $f$ belongs to $\left(L^{\infty}(\mathbb{R}^d)\right)^d$. To this end, we first need to recall the Hardy-Littlewood-Sobolev (see for instance \cite[Theorem 7.25 p. 162]{MR3099262}).

\begin{prop}[Hardy-Littlewood-Sobolev inequality]
Let $0<\alpha<d$. We define $I(f) := |x|^{-\alpha} \ast f$ (where $\ast$ is the convolution operator).
Let $p,q >1$ such that $\displaystyle 1 + \frac{1}{q} = \frac{\alpha}{d} + \frac{1}{p}$. Then, there exists $C>0$ such that for every $f\in L^p(\mathbb{R}^d)$ we have : 
 $$ \|I(f)\|_{L^q(\mathbb{R}^d)} \leq C \|f\|_{L^p(\mathbb{R}^d)}.$$
\end{prop}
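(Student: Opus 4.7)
The plan is to establish the bound via a weak-type estimate combined with the Marcinkiewicz interpolation theorem, which is the most standard route for Hardy--Littlewood--Sobolev. The key observation is that the kernel $K(x) := |x|^{-\alpha}$ lies in the weak Lebesgue space $L^{d/\alpha,\infty}(\mathbb{R}^d)$, since $|\{x\in \mathbb{R}^d \mid |x|^{-\alpha} > t\}| = c_d\, t^{-d/\alpha}$. The scaling $K(\lambda x) = \lambda^{-\alpha} K(x)$ then predicts exactly the exponent relation $1 + 1/q = \alpha/d + 1/p$ by dimensional analysis.

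First I would prove the weak-type estimate: for every $f\in L^p(\mathbb{R}^d)$ and every $\lambda>0$,
\[
\left|\{x\in \mathbb{R}^d \mid |I(f)(x)| > \lambda\}\right| \leq C \left(\frac{\|f\|_{L^p(\mathbb{R}^d)}}{\lambda}\right)^q.
\]
To this end, for a parameter $R>0$ to be optimized, I would split the kernel as $K = K_1^R + K_2^R$ with $K_1^R := K\, \mathbf{1}_{B_R}$ and $K_2^R := K\, \mathbf{1}_{\mathbb{R}^d \setminus B_R}$. A direct computation in polar coordinates gives $\|K_1^R\|_{L^1(\mathbb{R}^d)} = c_1\, R^{d-\alpha}$ (integrable because $\alpha < d$) and $\|K_2^R\|_{L^{p'}(\mathbb{R}^d)} = c_2\, R^{d/p' - \alpha}$ (integrable because the hypothesis $q < \infty$ is equivalent to $\alpha p' > d$). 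Young's inequality in its two extreme forms then yields
\[
\|K_1^R * f\|_{L^p(\mathbb{R}^d)} \leq c_1 R^{d-\alpha}\,\|f\|_{L^p(\mathbb{R}^d)}, \qquad \|K_2^R * f\|_{L^\infty(\mathbb{R}^d)}\leq c_2 R^{d/p' - \alpha}\,\|f\|_{L^p(\mathbb{R}^d)}.
\]
I would then choose $R = R(\lambda, \|f\|_{L^p})$ so that the $L^\infty$ bound on $K_2^R * f$ equals $\lambda/2$, which makes the set $\{|K_2^R*f| > \lambda/2\}$ empty. The remaining distribution $\{|K_1^R * f| > \lambda/2\}$ is estimated via the Markov inequality using the $L^p$ control above, and a direct substitution of $R$ yields precisely the exponent $q$ prescribed by the scaling relation.

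Second, I would upgrade the weak-type estimate to the claimed strong-type estimate by Marcinkiewicz interpolation. Since $p, q > 1$ and both lie in an open range of admissible exponents (any small perturbation $p_0 < p < p_1$ gives corresponding exponents $q_0, q_1$ still satisfying $1 + 1/q_i = \alpha/d + 1/p_i$), the weak-type bounds at $(p_0,q_0)$ and $(p_1,q_1)$ interpolate to a strong-type $(p,q)$ bound. The endpoints $p=1$ (where only the weak-type holds, and indeed strong-type fails) and $p = d/\alpha$ (where $q=\infty$ and BMO would replace $L^\infty$) are excluded by assumption, so no endpoint issues arise. The main obstacle is book-keeping the split-kernel argument so that the balance of $R$ in the two Young estimates produces exactly the predicted exponent $q$; the computations are routine but must be tracked carefully because a single sign error in the exponent invalidates the optimization.
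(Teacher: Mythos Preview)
Your argument is correct and is one of the standard proofs of the Hardy--Littlewood--Sobolev inequality: kernel splitting at a scale $R$, Young's inequality on each piece, optimization in $R$ to obtain the weak-type $(p,q)$ bound, and then Marcinkiewicz interpolation to pass to strong type. All the exponent bookkeeping you indicate is accurate, in particular the equivalence between $q<\infty$ and $\alpha p'>d$ that guarantees $K_2^R\in L^{p'}$.

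There is nothing to compare, however: the paper does not prove this proposition. It is stated as a recalled classical result, with an explicit reference to \cite[Theorem~7.25, p.~162]{MR3099262}, and is only used as a black box in the proofs of Lemma~\ref{Lemme_forme_u_per_ap} and Lemma~\ref{lemme_borne_u_per_ap}. So your proposal supplies a proof where the paper simply cites one; the route you take is the textbook one and would be entirely appropriate if a self-contained argument were required.
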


We may now prove that the unique (up to an additive constant) solution $u$ to \eqref{equation_coeff_periodique_A_P} such that $\nabla u \in \left(\mathcal{A}^p\right)^d$ can be made explicit using the Green function $G_{per}$. 

\begin{lemme}
\label{Lemme_forme_u_per_ap}
Assume that $1<p<\frac{d}{2}$ and let $f \in \left(\mathcal{A}^p\right)^d$, then the solution $u$ (unique up to an additive constant) to \eqref{equation_coeff_periodique_A_P} such that $\nabla u \in \left(\mathcal{A}^p\right)^d$ is given by 
\begin{equation}
\label{def_u_integrale_AP}
    u = Tf := \int_{\mathbb{R}^d} \nabla_y G_{per}(.,y) f(y) dy.
\end{equation}
In addition, we have $u \in \mathcal{E}^{p^*}$.
\end{lemme}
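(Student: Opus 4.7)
The plan is to extend the integral representation, which is known for $f\in \left(L^p(\mathbb{R}^d)\right)^d$ by the results of \cite{avellaneda1991lp}, to $f \in \left(\mathcal{A}^p\right)^d$ by density, using the Green function pointwise bound \eqref{estimgreen1} combined with the Hardy-Littlewood-Sobolev inequality.

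First I would invoke Corollary \ref{densité_Ap} to obtain a sequence $(f_n)_{n} \subset \left(L^p(\mathbb{R}^d)\right)^d$ converging to $f$ in $\left(\mathcal{A}^p\right)^d$. For each $n$, since $f_n \in \left(L^p(\mathbb{R}^d)\right)^d$, the results of \cite{avellaneda1991lp} guarantee that $v_n := Tf_n = \int_{\mathbb{R}^d} \nabla_y G_{per}(\cdot, y) f_n(y)\, dy$ is well-defined, lies in $L^1_{loc}(\mathbb{R}^d)$ with $\nabla v_n \in \left(L^p(\mathbb{R}^d)\right)^d$, and solves $-\operatorname{div}(a_{per} \nabla v_n) = \operatorname{div}(f_n)$. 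By Step~1 of the proof of Proposition \ref{existence_cas_periodique_AP}, $\nabla v_n$ converges to $\nabla u$ in $\left(\mathcal{A}^p\right)^d$, where $u$ is the solution sought in the statement (unique up to an additive constant thanks to Lemma \ref{unicité_periodique_AP}).

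The heart of the argument is a Hardy-Littlewood-Sobolev-type estimate. From the pointwise bound $|\nabla_y G_{per}(x,y)|\leq C|x-y|^{-(d-1)}$, I deduce $|v_n(x)| \leq C\int_{\mathbb{R}^d} |f_n(y)||x-y|^{-(d-1)}\,dy$. Integrating this inequality over the cell $Q+x$ and applying Fubini yields
\begin{equation*}
    \mathcal{M}(|v_n|)(x) \leq C \int_{\mathbb{R}^d} |f_n(y)|\, \Phi(y-x) \, dy, \qquad \Phi(u) = \int_Q \frac{dw}{|w-u|^{d-1}}.
\end{equation*}
Using the identities $\mathcal{M}(|f_n|) = |f_n|\ast 1_{-Q}$ and $\Phi = 1_Q \ast |\cdot|^{-(d-1)}$ together with the radiality of $|\cdot|^{-(d-1)}$, the associativity of convolution gives
\begin{equation*}
    \mathcal{M}(|v_n|)(x) \leq C \bigl(|\cdot|^{-(d-1)} \ast \mathcal{M}(|f_n|)\bigr)(x).
\end{equation*}
Since $p<d/2$ implies $p^* < d$, the Hardy-Littlewood-Sobolev inequality with $\alpha = d-1$ is applicable, and the resulting integrability exponent $q$ satisfies $\tfrac{1}{q} = \tfrac{1}{p^*} - \tfrac{1}{d}$, namely $q = (p^*)^* = \tfrac{pd}{d-2p}$. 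Consequently $\|v_n\|_{\mathcal{E}^{p^*}} \leq C\|f_n\|_{\mathcal{E}^p}$, and applying the same inequality to $v_n - v_m = T(f_n - f_m)$ shows that $(v_n)_n$ is a Cauchy sequence in the Banach space $\mathcal{E}^{p^*}$.

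It then remains to identify the limit $w := \lim_n v_n$ in $\mathcal{E}^{p^*}$ with both $u$ and $Tf$. Proposition \ref{sous-suite} yields $v_n \to w$ in $L^1_{loc}(\mathbb{R}^d)$ up to a subsequence; combined with $\nabla v_n \to \nabla u$ in $L^1_{loc}$ this forces $\nabla w = \nabla u$, so $u$ differs from $w$ by a constant which is absorbed into the additive freedom of $u$. A local-plus-tail decomposition of $\int |f(y)||x-y|^{-(d-1)}\,dy$ (the near part is finite since $f \in L^1_{loc}$ and $|\cdot|^{-(d-1)} \in L^1_{loc}$ in dimension $d\geq 2$; the tail is dominated, by the same Fubini trick, by $|\cdot|^{-(d-1)} \ast \mathcal{M}(|f|)$, which is finite almost everywhere by HLS) shows that $Tf(x)$ is defined as an absolutely convergent integral for a.e.\ $x$, and the convolution estimate applied to the difference $Tf_n - Tf$ gives $Tf_n \to Tf$ in $\mathcal{E}^{p^*}$. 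By uniqueness of the limit, $Tf = w = u$ in $\mathcal{E}^{p^*}$.

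The main technical obstacle is the precise Fubini/associativity computation that turns the pointwise Green function bound into a convolution against $\mathcal{M}(|f_n|)$, an argument analogous to but cleaner than the splitting used in Step~2 of the proof of Proposition \ref{existence_cas_periodique_AP}; secondarily, the a.e.\ well-definedness of $Tf$ for a general $f \in \mathcal{A}^p$ (which need not belong to any Lebesgue space itself) relies on combining local integrability of $f$ with the HLS control of the tail through $\mathcal{M}(|f|) \in L^{p^*}$.
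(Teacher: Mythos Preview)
Your proposal is correct and follows essentially the same strategy as the paper: density via Corollary~\ref{densité_Ap}, the Green function bound \eqref{estimgreen1} combined with Fubini to obtain $\mathcal{M}(|Tf|)\leq C\,|\cdot|^{-(d-1)}*\mathcal{M}(|f|)$, then Hardy--Littlewood--Sobolev to land in $L^{p^{**}}$, and finally Proposition~\ref{sous-suite} to identify limits. The only cosmetic difference is that your near-far decomposition for the a.e.\ well-definedness of $Tf$ is unnecessary: the paper simply observes that the same HLS bound gives $\mathcal{M}(|Tf|)\in L^{p^{**}}$ directly, which already forces $Tf\in L^1_{loc}$.
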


\begin{proof}
We begin by showing that the function $Tf$ given by \eqref{def_u_integrale_AP} is well-defined in $L^1_{loc}(\mathbb{R}^d)$ if $f \in \left(\mathcal{A}^p\right)^d$ and satisfies $\mathcal{M}(|Tf|) \in L^{p^{**}}(\mathbb{R}^d)$. Using estimate \eqref{estimgreen1}, we know there exists a constant  $C>0$ such that for every $x \in \mathbb{R}^d$, we have : 
\begin{equation*}
    |Tf(x)| \leq C \int_{\mathbb{R}^d} \frac{1}{|y|^{d-1}} |f|(x-y)dy. 
\end{equation*}
For every $z\in \mathbb{R}^d$, we integrate the previous inequality with respect to $x \in Q+z$ and we use the Fubini Theorem to obtain : 
$$\mathcal{M}\left(\left|Tf\right| \right)(z) \leq C \int_{\mathbb{R}^d} \frac{1}{|y|^{d-1}} \int_{Q + z} |f|(x-y) dx dy = C \dfrac{1}{|.|^{d-1}}*\mathcal{M}(|f|).$$
Since $\displaystyle 1 +  \frac{1}{p^{**}} = \frac{d-1}{d} + \frac{1}{p^*}$, the Hardy-Littlewood-Sobolev inequality therefore shows the existence of a constant $C>0$, such that : 
\begin{equation}
\label{hardy_inequality_mean}
    \|\mathcal{M}\left(\left|Tf\right| \right)\|_{L^{p^{**}}(\mathbb{R}^d)} \leq C \|\mathcal{M}\left(\left|f\right| \right)\|_{L^{p^{*}}(\mathbb{R}^d)}.
\end{equation}
In particular $\mathcal{M}\left(\left|Tf\right| \right)$ belongs to $L^{p^{**}}(\mathbb{R}^d)$ and is therefore finite for almost every $x \in \mathbb{R}^d$. We deduce that $Tf$ is well-defined in $L^1_{loc}(\mathbb{R}^d)$.

We next show that, up to an additive constant, we have $u=Tf$. We first recall that in the proof of Proposition \ref{existence_cas_periodique_AP}, we have considered a sequence $(f_n)_{n \in \mathbb{N}}$ of functions in $\left(L^p(\mathbb{R}^d)\right)^d$ that converges to $f$ in $\mathcal{A}^p$ and an associated sequence of functions $(u_n)_{n \in \mathbb{N}}$, solutions to \eqref{equation_periodique_L2} with a gradient in $\left(L^p(\mathbb{R}^d)\right)^d$, that converges in $L^1_{loc}(\mathbb{R}^d)$ to $u$ solution to \eqref{equation_coeff_periodique_A_P} such that $\nabla u \in \left(\mathcal{A}^p\right)^d$. We claim that $u_n$ is actually defined, up to an additive constant, by $\displaystyle u_n = Tf_n := \int_{\mathbb{R}^d} \nabla _y G_{per}(.,y).f_n(y) dy$.
Using estimate \eqref{estimgreen1}, we indeed know there exists a constant $C>0$ such that for every $x \in \mathbb{R}^d$,
$$\left|Tf_n\right| \leq C \int_{\mathbb{R}^d} \frac{1}{|x-y|^{d-1}} |f_n|(y)dy.$$
Since $f_n$ belongs to $\left(L^p(\mathbb{R}^d)\right)^d$ for every $n \in \mathbb{N}$ and $\displaystyle 1 + \frac{1}{p^{*}} = \frac{d-1}{d} + \frac{1}{p}$, we know from the Hardy-Littlewood-Sobolev inequality that $Tf_n$ belongs to $L^{p^*}(\mathbb{R}^d)$. In addition, the results established in \cite[Theorem A]{avellaneda1991lp} implies that $Tf_n$ is a solution to \eqref{equation_periodique_L2} such that $\nabla Tf_n \in \left(L^p(\mathbb{R}
^d)\right)^d$. A solution to \eqref{equation_periodique_L2} with a gradient in $\left(L^p(\mathbb{R}^d)\right)^d$ being unique up to an additive constant, we conclude that $\nabla u_n = \nabla Tf_n$. We therefore obtain that $u_n = Tf_n$ up to an additive constant. Exactly as in the proof of inequality \eqref{hardy_inequality_mean}, the Hardy-Littlewood-Sobolev inequality gives the existence of a constant $C>0$ independent of $n$ such that  
$$\|\mathcal{M}\left(\left|u_n - Tf\right| \right)\|_{L^{p^{**}}(\mathbb{R}^d)} = \|\mathcal{M}\left(\left|Tf_n - Tf\right| \right)\|_{L^{p^{**}}(\mathbb{R}^d)} \leq C \|\mathcal{M}\left(\left|f_n-f\right| \right)\|_{L^{p^{*}}(\mathbb{R}^d)} \underset{n\rightarrow \infty}{\longrightarrow} 0.$$ 

Finally, using Proposition \ref{sous-suite} we know that $u_n$ converges to $T_f$ in $L^1_{loc}(\mathbb{R}^d)$ up to an extraction. The uniqueness of the limit in $L^1_{loc}(\mathbb{R}^d)$ allows to conclude that $u=T_f$. 
\end{proof}

\begin{lemme}
\label{lemme_borne_u_per_ap}
Assume $1<p< \frac{d}{2}$ and let $f \in \left(\mathcal{A}^p\cap L^{\infty}(\mathbb{R}^d)\right)^d$. Then, the function $u$ defined by \eqref{def_u_integrale_AP} belongs to $L^{\infty}(\mathbb{R}^d)$. 
\end{lemme}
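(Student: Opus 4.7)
The plan is to establish a uniform bound on $|u(x)| = |Tf(x)|$ by splitting the defining integral at a fixed radius and handling the near and far contributions separately. Invoking the Green function estimate \eqref{estimgreen1}, the starting point is
\begin{equation}
|u(x)| \leq C \int_{\mathbb{R}^d} \frac{|f(y)|}{|x-y|^{d-1}}\,dy.
\end{equation}
The idea is to split this integral as $I_1(x) + I_2(x)$, where $I_1(x)$ is the integral over $B_{R}(x)$ and $I_2(x)$ is the integral over $\mathbb{R}^d\setminus B_R(x)$, with $R$ fixed (for instance $R = 2\sqrt{d}$).

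For $I_1(x)$, since $f\in L^{\infty}(\mathbb{R}^d)$ and the function $y\mapsto |x-y|^{1-d}$ is locally integrable in dimension $d$, a change of variables yields
\begin{equation}
I_1(x) \leq \|f\|_{L^{\infty}(\mathbb{R}^d)} \int_{B_R} \frac{dw}{|w|^{d-1}} = C(d)\, R \,\|f\|_{L^{\infty}(\mathbb{R}^d)},
\end{equation}
which is independent of $x$.

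For $I_2(x)$, I would mimic the averaging trick employed in Step~2 of the proof of Proposition~\ref{existence_cas_periodique_AP}. Namely, after the change of variable $w = x-y$, write $I_2(x) = \int_Q I_2(x)\,dz$ and shift by $z$ in the inner integral to make $\mathcal{M}(|f|)$ appear. Using $|w+z|\geq |w|/2$ for $w\in\mathbb{R}^d\setminus B_{2\sqrt{d}}$ and $z\in Q$, one obtains an estimate of the form
\begin{equation}
I_2(x) \leq C \int_{\mathbb{R}^d \setminus B_{\sqrt{d}}} \frac{\mathcal{M}(|f|)(x-w)}{|w|^{d-1}}\,dw.
\end{equation}
The H\"older inequality with conjugate exponents $p^*$ and $(p^*)'$ then gives
\begin{equation}
I_2(x) \leq C \left( \int_{|w|>\sqrt{d}} \frac{dw}{|w|^{(d-1)(p^*)'}} \right)^{1/(p^*)'} \|\mathcal{M}(|f|)\|_{L^{p^*}(\mathbb{R}^d)}.
\end{equation}
The key observation is that the assumption $p<\frac{d}{2}$ is precisely equivalent to $p^*<d$, that is $(p^*)' > \frac{d}{d-1}$, hence $(d-1)(p^*)'>d$, so the above integral converges and yields a finite constant independent of $x$. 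Combining both estimates gives $\|u\|_{L^{\infty}(\mathbb{R}^d)} \leq C(\|f\|_{L^{\infty}(\mathbb{R}^d)} + \|f\|_{\mathcal{E}^p})$.

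The main (minor) obstacle is to carry out the averaging manipulation cleanly, since the Green function estimate only delivers the slow decay $|x-y|^{-(d-1)}$ rather than the $|x-y|^{-d}$ appearing in the analogous argument of Proposition~\ref{existence_cas_periodique_AP}. This is exactly what forces the stronger hypothesis $p<\frac{d}{2}$ (instead of merely $p<d$) and is the reason why the uniform boundedness of $u$ is proved here separately, rather than being obtained together with the $L^2_{unif}$ estimate on $\nabla u$.
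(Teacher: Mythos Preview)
Your proposal is correct and follows essentially the same approach as the paper: split the convolution integral into a near part (controlled by $\|f\|_{L^\infty}$ and the local integrability of $|w|^{1-d}$) and a far part (handled by the $Q$-averaging trick to produce $\mathcal{M}(|f|)$, then H\"older with exponent $p^*$, the finiteness of the resulting integral being exactly the condition $p<d/2$). The only cosmetic differences are that the paper uses the radius $3\sqrt{d}$ and labels the far contribution $I_1$ and the near one $I_2$, while you do the opposite.
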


\begin{proof}
We begin by considering $x_0 \in \mathbb{R}^d$ and we split $u(x_0)$ in two parts as follows :
\begin{align*}
    u(x_0) & = \int_{\mathbb{R}^d\setminus{B_{3 \sqrt{d}}(x_0)}} \nabla_y G_{per}(x_0,y) f(y) dy + \int_{B_{3 \sqrt{d}}(x_0)} \nabla_y G_{per}(x_0,y) f(y) dy  = I_1(x_0) + I_2(x_0).
\end{align*}
We want to bound both $|I_1(x_0)|$ and $|I_2(x_0)|$ uniformly with respect to $x_0$. Estimate~\eqref{estimgreen1} gives $C>0$ independent of $x_0$ such that : 
$$|I_1(x_0)| \leq C \int_{\mathbb{R}^d\setminus{B_{3 \sqrt{d}}(x_0)}} \frac{1}{|x_0-y|^{d-1}} |f(y)| dy.$$
Since $|Q|=1$, we have by integrating the previous inequality : 
\begin{align*}
  |I_1(x_0)| & \leq  C \int_{Q} \int_{\mathbb{R}^d\setminus{B_{3 \sqrt{d}}(x_0-z)}} \frac{1}{|x_0-y-z|^{d-1}} |f(y+z)| dy dz.
\end{align*}
For every $z\in Q$ and $y\in \mathbb{R}^d \setminus{B_{3 \sqrt{d}}(x_0-z)}$, since $|z| < \sqrt{d}$ and $|x_0 - y - z| \geq 3\sqrt{d}$, we have $|x_0-y| = |x_0 - y-z +z| \geq 2\sqrt{d}$.
It follows that $\mathbb{R}^d\setminus{B_{3\sqrt{d}}(x_0-z)} \subset{\mathbb{R}^d\setminus{B_{2\sqrt{d}}(x_0)}}$. We also have $\displaystyle |z| \leq \sqrt{d} \leq \frac{1}{2} |x_0-y|$ which gives  $|x_0 - y - z| \geq \frac{1}{2}|x_0 - y|$.
Using respectively the Fubini theorem and the H\"older inequality, we deduce : 
\begin{align*}
   |I_1(x_0)| & \leq 2^{d-1} C \int_{Q} \int_{\mathbb{R}^d\setminus{B_{2\sqrt{d}}(x_0)}} \frac{1}{|x_0-y|^{d-1}} |f(y+z)| dy dz \\ 
   &\leq2^{d-1}C  \left(\int_{\mathbb{R}^d\setminus{B_{2\sqrt{d}}}}  \frac{1}{|y|^{(d-1)(p^*)'}}dy\right)^{1/(p^*)'}\|\mathcal{M}\left(\left|f\right| \right)\|_{L^{p^*}(\mathbb{R}^d)}. 
\end{align*}
Here we have denoted by $\displaystyle(p^*)'=\frac{pd}{d(p-1)+p}$, the conjugate exponent associated with $p^*$. 
The integral of the right-hand term being finite as soon as $\displaystyle (d-1)\frac{pd}{d(p-1) + p}>d$, that is as soon as $p<\dfrac{d}{2}$, we have finally bounded $|I_1(x_0)|$ uniformly with respect to $x_0$. 

Next, in order to bound $|I_2(x_0)|$, we again use \eqref{estimgreen1} : 
\begin{align*}
    |I_2(x_0)| & \leq C \int_{B_{3 \sqrt{d}}(x_0)} \frac{1}{|x_0-y|^{d-1}} |f(y)| dy \leq C \left(\int_{B_{3 \sqrt{d}}} \frac{1}{|y|^{d-1}} dy\right) \|f\|_{L^{\infty}(\mathbb{R}^d)} < \infty.
\end{align*}
The right-hand side in the latter inequality being independent of $x_0$, we conclude the proof. 
\end{proof}

\subsection{Well posedness in the non-periodic setting}

In this section we return to the non-periodic problem \eqref{equation_generale_Ap}, 
when $a=a_{per} + \Tilde{a}$ and the perturbation $\Tilde{a} \in \left(\mathcal{A}^p\right)^d$ of the periodic geometry does not necessarily vanish.  We assume it satisfies the regularity assumption \eqref{hypothèse_a_holder_AP}. We again adapt a method introduced in \cite{blanc2018correctors} which consists to, first, establish the continuity of operator $\nabla\left(-\operatorname{div}a\nabla\right)^{-1}\operatorname{div}$ from $\left(\mathcal{A}^p\cap \mathcal{C}^{0,\alpha}(\mathbb{R}^d)\right)^d$ to $\left(\mathcal{A}^p\cap \mathcal{C}^{0,\alpha}(\mathbb{R}^d)\right)^d$, and, second, to use both this continuity result and a connectedness argument to extend the results established in the periodic case $a = a_{per}$ to the general case. In order to show the continuity result (established in Lemma~\ref{estimee_A_priori_Ap} below), we need to first introduce a preliminary result when the perturbation $\Tilde{a}$ is sufficiently small and next a uniqueness result regarding the solutions $u$ to \eqref{equation_generale_Ap} such that $\nabla u \in \left(\mathcal{A}^p\right)^d$, respectively in Lemma \ref{point_fixe_LP} and Lemma \ref{unicite_generale_AP}.

\begin{lemme}
\label{point_fixe_LP}
Let $a_{per}$ be a $Q$-periodic matrix-valued function satisfying \eqref{hypothèse_a_elliptic_AP} and \eqref{hypothèse_a_holder_AP}. Then, for every $r \in ]1, +\infty[$, there exists $\varepsilon_0>0$ such that for every $0<\varepsilon<\varepsilon_0$, every $f \in \left(L^r(\mathbb{R}^d)\right)^d$ and every matrix-valued coefficient $\Tilde{a} \in \left(L^{\infty}(\mathbb{R}^d)\right)^{d \times d}$ satisfying $\|\Tilde{a}\|_{L^{\infty}(\mathbb{R}^d)} < \varepsilon$, equation \eqref{equation_generale_Ap} with $a = a_{per} + \Tilde{a}$ admits a unique (up to an additive constant) solution $u$ such that $\nabla u \in \left(L^r(\mathbb{R}^d)\right)^d$. 
\end{lemme}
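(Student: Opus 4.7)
The plan is to reduce the equation to a fixed-point problem in $(L^r(\mathbb{R}^d))^d$ and apply the Banach contraction principle, using as main input the $L^r$-continuity of the operator $-\nabla\left(-\operatorname{div} a_{per}\nabla\right)^{-1}\operatorname{div}$ established in \cite{avellaneda1991lp}. Rewriting \eqref{equation_generale_Ap} as
\begin{equation}
-\operatorname{div}(a_{per} \nabla u) = \operatorname{div}(f + \Tilde{a}\nabla u),
\end{equation}
I would define the solution operator $S : (L^r(\mathbb{R}^d))^d \to (L^r(\mathbb{R}^d))^d$ sending $h$ to $\nabla v$, where $v$ is the unique (up to an additive constant) solution to $-\operatorname{div}(a_{per}\nabla v) = \operatorname{div}(h)$ with $\nabla v \in (L^r(\mathbb{R}^d))^d$; by \cite[Theorem A]{avellaneda1991lp}, $S$ is well-defined and bounded, with some constant $C_r>0$ depending only on $r$, $d$, $\lambda$ and $a_{per}$. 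Then I would set
\begin{equation}
\Phi : g \in (L^r(\mathbb{R}^d))^d \longmapsto S(f + \Tilde{a} g) \in (L^r(\mathbb{R}^d))^d.
\end{equation}

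The key estimate is the Lipschitz bound
\begin{equation}
\|\Phi(g_1) - \Phi(g_2)\|_{L^r(\mathbb{R}^d)} = \|S(\Tilde{a}(g_1-g_2))\|_{L^r(\mathbb{R}^d)} \leq C_r \|\Tilde{a}\|_{L^{\infty}(\mathbb{R}^d)} \|g_1 - g_2\|_{L^r(\mathbb{R}^d)} \leq C_r \varepsilon \|g_1 - g_2\|_{L^r(\mathbb{R}^d)},
\end{equation}
so choosing $\varepsilon_0 = 1/(2C_r)$, the map $\Phi$ becomes a strict contraction on the Banach space $(L^r(\mathbb{R}^d))^d$ for every $0<\varepsilon<\varepsilon_0$. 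The Banach fixed point theorem then yields a unique $g^* \in (L^r(\mathbb{R}^d))^d$ with $\Phi(g^*)=g^*$. By construction $g^* = S(f + \Tilde{a} g^*) = \nabla u$ for some $u \in L^1_{loc}(\mathbb{R}^d)$ (the solution furnished by $S$), and $u$ then satisfies $-\operatorname{div}((a_{per}+\Tilde{a})\nabla u) = \operatorname{div}(f)$ as required.

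For uniqueness up to an additive constant, I would take two solutions $u_1, u_2$ with gradients in $(L^r(\mathbb{R}^d))^d$; the difference $w = u_1-u_2$ satisfies $-\operatorname{div}(a_{per}\nabla w) = \operatorname{div}(\Tilde{a}\nabla w)$ with $\nabla w \in (L^r(\mathbb{R}^d))^d$, hence $\nabla w = S(\Tilde{a} \nabla w)$ and $\|\nabla w\|_{L^r} \leq C_r \varepsilon \|\nabla w\|_{L^r}$, forcing $\nabla w = 0$ since $C_r\varepsilon < 1$. The only genuine ingredient is the $L^r$-continuity of $-\nabla\left(-\operatorname{div} a_{per}\nabla\right)^{-1}\operatorname{div}$; this is not an obstacle since it is precisely the content of \cite[Theorem A]{avellaneda1991lp} under assumptions \eqref{hypothèse_a_elliptic_AP}–\eqref{hypothèse_a_holder_AP}. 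The argument is purely perturbative and does not rely on any special structure of $\Tilde{a}$ beyond smallness in $L^{\infty}$.
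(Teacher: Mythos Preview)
Your argument is correct and matches the paper's proof essentially verbatim: both rewrite the equation as $-\operatorname{div}(a_{per}\nabla u)=\operatorname{div}(f+\Tilde{a}\nabla u)$ and apply a Banach fixed-point argument using the $L^r$-boundedness of $\nabla(-\operatorname{div} a_{per}\nabla)^{-1}\operatorname{div}$ from \cite[Theorem~A]{avellaneda1991lp}. The only cosmetic difference is that the paper writes out the Picard iteration $u_{n+1}$ explicitly rather than invoking the contraction mapping theorem abstractly.
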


\begin{proof}
We begin by remarking that the existence and uniqueness of such a solution is equivalent to the existence and uniqueness of a solution $u$ to  $- \operatorname{div}(a_{per}\nabla u) = \operatorname{div}(f + \Tilde{a}\nabla u)$.
We apply a fixed-point method on $\mathbb{R}^d$, considering $(u_n)_{n \in \mathbb{N}}$ defined by $u_0 = 0$ and for every $n\in \mathbb{N}$, $u_{n+1}$ is solution to : 
\begin{equation}
\label{equation_point_fixe_AP}
 - \operatorname{div}(a_{per}\nabla u_{n+1}) = \operatorname{div}(f + \Tilde{a}\nabla u_{n}) \quad \text{on } \mathbb{R}^d,  
\end{equation}
such that $\nabla u_{n+1} \in \left(L^r(\mathbb{R}^d)\right)^d$. Since, for every $n \in \mathbb{N}$, the function $F_n := f + \Tilde{a}\nabla u_{n}$ belongs to $\left(L^r(\mathbb{R}^d)\right)^d$, the results of \cite[Theorem A]{avellaneda1991lp} show the sequence $u_n$ is well-defined. Since, likewise for every $n\in \mathbb{N}^*$, the function $u_{n+1} - u_n$ is solution to $- \operatorname{div}(a_{per}( \nabla (u_{n+1} - u_n))) = \operatorname{div}(\Tilde{a}(\nabla u_n - \nabla u_{n-1}))$,
the result of \cite[Theorem A]{avellaneda1991lp} also yields a constant $C>0$ independent of $\Tilde{a}$ and $n$ such that 
\begin{equation}
\label{majoration_contractante}
   \|\nabla u_{n+1} - \nabla u_n\|_{L^r(\mathbb{R}^d)} \leq C \|\Tilde{a}(\nabla u_n - \nabla u_{n-1})\|_{L^r(\mathbb{R}^d)} \leq C \|\Tilde{a}\|_{L^{\infty}(\mathbb{R}^d)} \|\nabla u_{n} - \nabla u_{n-1}\|_{L^r(\mathbb{R}^d)}. 
\end{equation}
Therefore, if 
\begin{equation}
\label{majoration_a_petit}
    \|\Tilde{a}\|_{L^{\infty}(\mathbb{R}^d)} < \frac{1}{C},
\end{equation}
the sequence $\nabla u_n$ is a Cauchy sequence in $\left(L^r(\mathbb{R}^d)\right)^d$ and it converges to a gradient $\nabla u$ in $\left(L^r(\mathbb{R}^d)\right)^d$. Passing to the limit in the distribution sense in \eqref{equation_point_fixe_AP}, we obtain that $\nabla u$ is solution to \eqref{equation_generale_Ap}. 
To prove uniqueness, we consider $u^1$ and $u^2$ two solutions to \eqref{equation_generale_Ap} such that $\nabla u^1$ and $\nabla u^2$ belongs to $\left(L^r(\mathbb{R}^d)\right)^d$ and we have that $u^1-u^2$ is solution to
$-\operatorname{div}(a_{per} (\nabla( u^1-u^2))) = \operatorname{div}(\Tilde{a}(\nabla u^1 - \nabla u^2)).$
Estimate \eqref{majoration_contractante} implies  
$$ \|\nabla u^1 - \nabla u^2\|_{L^r(\mathbb{R}^d)} \leq C \|\Tilde{a}\|_{L^{\infty}(\mathbb{R}^d)} \|\nabla u^1 - \nabla u^2\|_{L^r(\mathbb{R}^d)},$$
which, given \eqref{majoration_a_petit}, shows $\nabla u^1 = \nabla u^2$. 
\end{proof}

\begin{lemme}
\label{unicite_generale_AP}
Let $a_{per} \in  \left(L^2_{per}(\mathbb{R}^d)\right)^{d \times d}$ and $\Tilde{a} \in \left(\mathcal{A}^p\right)^{d\times d}$ for $p\in]1,d[$.  Assume that $a = a_{per} + \Tilde{a}$ satisfies \eqref{hypothèses1}, \eqref{hypothèse_a_elliptic_AP} and \eqref{hypothèse_a_holder_AP}. Let $u\in L^1_{loc}(\mathbb{R}^d)$ solution in $\mathcal{D}'(\mathbb{R}^d)$ to : 
\begin{equation}
\label{equation_unicité_general}
    - \operatorname{div}(a \nabla u) = 0 \quad \text{on } \mathbb{R}^d,
\end{equation}
such that $\nabla u \in \left(\mathcal{A}^p\cap L^{\infty}(\mathbb{R}^d)\right)^d$. Then $\nabla u = 0$.
\end{lemme}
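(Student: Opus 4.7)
The plan is to reduce the statement to a uniqueness result that is already available in the $L^q$-framework developed in \cite{blanc2018correctors}. First, I rewrite the equation as
\begin{equation}
-\operatorname{div}(a_{per}\nabla u)=\operatorname{div}(-\tilde a\,\nabla u)\qquad\text{on }\mathbb{R}^d,
\end{equation}
where $\tilde a=a-a_{per}\in(\mathcal{A}^p\cap\mathcal{C}^{0,\alpha}(\mathbb{R}^d))^{d\times d}$ by Proposition~\ref{decomposition_fonctions_A_p}. I check that the right-hand side $f:=-\tilde a\,\nabla u$ belongs to $(\mathcal{A}^p\cap L^\infty(\mathbb{R}^d))^d$: on the one hand $\mathcal{M}(|f|)\le\|\nabla u\|_{L^\infty}\mathcal{M}(|\tilde a|)\in L^{p^*}(\mathbb{R}^d)$; on the other hand the discrete Leibniz identity $\delta_i f=-(\delta_i\tilde a)\,\nabla u(\cdot+e_i)-\tilde a\,\delta_i\nabla u$ shows $\delta f\in(L^p(\mathbb{R}^d))^{d\times d}$ since $\delta\tilde a\in L^p$, $\nabla u\in L^\infty$, $\tilde a\in L^\infty$ and $\nabla u\in\mathcal{A}^p$ gives $\delta\nabla u\in L^p$.

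Second, I upgrade the regularity and integrability of $\nabla u$. Because $a\in\mathcal{C}^{0,\alpha}$ and $\nabla u\in L^\infty_{loc}$, standard Schauder estimates (\cite[Theorem 5.19]{MR3099262}) applied to $-\operatorname{div}(a\nabla u)=0$ yield $\nabla u\in(\mathcal{C}^{0,\alpha}(\mathbb{R}^d))^d$. Since $\nabla u\in(\mathcal{A}^p)^d$ already implies $\mathcal{M}(|\partial_i u|)\in L^{p^*}(\mathbb{R}^d)$, the uniqueness part of Proposition~\ref{decomposition_fonctions_A_p} forces the periodic component of each $\partial_i u$ to vanish, so $\nabla u\in(\mathcal{E}^p\cap\mathcal{C}^{0,\alpha}(\mathbb{R}^d))^d$. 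Proposition~\ref{prop_lebesgue_inclusion} then provides an exponent $q\ge\mathbf{q}=\frac{p^*(\alpha+d)-d}{\alpha}$ such that $\nabla u\in(L^q(\mathbb{R}^d))^d$; the same Proposition applied to $\tilde a$ (which is also in $\mathcal{E}^p\cap\mathcal{C}^{0,\alpha}$) shows $\tilde a\in(L^q(\mathbb{R}^d))^{d\times d}$, possibly after enlarging $q$.

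Third, I invoke the uniqueness result established in the $L^q$-framework for perturbed periodic coefficients: any solution $u\in L^1_{loc}(\mathbb{R}^d)$ of $-\operatorname{div}(a\nabla u)=0$ with $a=a_{per}+\tilde a$, $\tilde a\in(L^q)^{d\times d}$, and $\nabla u\in(L^q)^d$ must satisfy $\nabla u=0$. This uniqueness in the $L^q$ setting is proven in \cite[Proposition~2.1]{blanc2018correctors} (combined with Lemma~\ref{unicité_periodique_AP} via the rewriting of the first paragraph, to which the estimate \eqref{estim_per_AP} applies). Applying it to our $u$ closes the argument.

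The main obstacle is twofold: first, ensuring that the exponent $q$ produced by Proposition~\ref{prop_lebesgue_inclusion} actually falls into the range for which uniqueness is known in \cite{blanc2018correctors}, and second, verifying that the bootstrap from $(\mathcal{A}^p\cap L^\infty)^d$ to $(L^q)^d$ via H\"older continuity is not circular (i.e.\ that the Schauder estimate only uses that $\nabla u$ is bounded, and that the Proposition~\ref{prop_lebesgue_inclusion} inclusion is proved independently of the current lemma). An alternative, self-contained route would be to combine Proposition~\ref{existence_cas_periodique_AP} and the fixed-point Lemma~\ref{point_fixe_LP} after a cutoff decomposition $\tilde a=\chi_R\tilde a+(1-\chi_R)\tilde a$, exploiting that $\|(1-\chi_R)\tilde a\|_{L^\infty}\to 0$ as $R\to\infty$ (by Corollary~\ref{comportement_asymptotique_AP}) to absorb the non-perturbative part and force $\nabla u=0$.
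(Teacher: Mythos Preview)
Your main approach is valid but takes a genuinely different route from the paper. You bootstrap $\nabla u$ from $\mathcal{A}^p\cap L^\infty$ into $\mathcal{C}^{0,\alpha}$ via Schauder, then into $L^q$ via Proposition~\ref{prop_lebesgue_inclusion}, and finally import the $L^q$-uniqueness of \cite{blanc2018correctors} for coefficients of the form $a_{per}+\tilde a$ with $\tilde a\in L^q$. This is legitimate: since $\nabla u,\tilde a\in\mathcal{E}^p\cap\mathcal{C}^{0,\alpha}$, Proposition~\ref{prop_lebesgue_inclusion} places both in the same $L^q$ with $q=\frac{p^*(\alpha+d)-d}{\alpha}>1$, and the uniqueness result of \cite{blanc2018correctors} does cover that range. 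Your two acknowledged obstacles are not serious: the exponent is always admissible, and there is no circularity since neither Schauder nor Proposition~\ref{prop_lebesgue_inclusion} uses the present lemma. (A minor point: the parenthetical ``combined with Lemma~\ref{unicité_periodique_AP} via the rewriting'' is misleading --- the periodic uniqueness alone cannot close the loop because the right-hand side $-\tilde a\nabla u$ depends on $u$; you really do need the full perturbed uniqueness from \cite{blanc2018correctors}, which itself is proved by truncation.)

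The paper instead takes exactly what you describe as the ``alternative, self-contained route'': it truncates $\tilde a=\chi_R\tilde a+(1-\chi_R)\tilde a$, uses Corollary~\ref{comportement_asymptotique_AP} to make $\|(1-\chi_R)\tilde a\|_{L^\infty}$ small, applies Lemma~\ref{point_fixe_LP} to produce a comparison solution $v_R$ with $\nabla v_R\in L^p\cap L^2$, and then shows $\nabla u=\nabla v_R$ via the $\mathcal{C}^{0,\alpha/2}$ estimate of Corollary~\ref{régularité_AP_coeff_periodique}; finally it concludes $\nabla u=0$ using only the elementary $L^2$-Liouville result of \cite[Lemma~1]{blanc2012possible}. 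The trade-off is clear: your argument is shorter but outsources the hard step to the $L^q$-theory (whose proof in \cite{blanc2018correctors} is precisely this truncation argument), whereas the paper's aim is to keep the whole chain inside the $\mathcal{A}^p$ framework, relying only on periodic estimates and the basic $L^2$ uniqueness. Both work; yours is economical, the paper's is self-contained and consistent with its stated methodological goal of building an $\mathcal{A}^p$ theory independent of the $L^q$ results.
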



\begin{proof}

\textbf{Step 1 : Truncation of $\Tilde{a}$}. For every $R>0$, we consider $\chi_R \in \mathcal{D}'(\mathbb{R}^d)$ a non-negative function such that $\text{Supp} (\chi_R) \subset B_{R+1}$, $\chi_{R_{|B_{R}}} \equiv 1$, $\|\chi_R \|_{L^{\infty}(\mathbb{R}^d)} =1$ and $\|\nabla \chi_R \|_{L^{\infty}(\mathbb{R}^d)} \leq C_0$,
where $C_0>0$ is a constant independent of $R$. In the sequel, we denote $\Tilde{a}_R = \chi_R \Tilde{a}$ and $\Tilde{a}_R^C = (1 - \chi_R) \Tilde{a}$.
We next consider the following equation : 
\begin{equation}
    \label{equationliouvillemodifiée}
    -\operatorname{div}((a_{per} + \Tilde{a}_R^C) \nabla v) = \operatorname{div}(\Tilde{a}_R \nabla u) \quad \text{on } \mathbb{R}^d.
    \end{equation}
Since $u$ is solution to \eqref{equation_unicité_general}, $v \equiv u$ is clearly solution to \eqref{equationliouvillemodifiée}. 

\textbf{Step 2 : Study of a particular solution to \eqref{equationliouvillemodifiée}}. Since $\Tilde{a}_R$ is compactly supported and $\nabla u \in \left(L^{\infty}(\mathbb{R}^d)\right)^d$, the function $\Tilde{a}_R \nabla u$ belongs to $\left(L^p(\mathbb{R}^d)\cap L^2(\mathbb{R}^d)\right)^d$. In addition, using Corollary \ref{comportement_asymptotique_AP}, we know that $\Tilde{a}(x)$ converges to 0 when $|x|\to \infty$ and for every $\varepsilon>0$, there exists $R_0>0$ such that for every $R>R_0$, we have : 
\begin{equation}
\label{hyp_rayon_a_petit}
   \left\|\Tilde{a}_R^C\right\|_
{L^{\infty}(\mathbb{R}^d)} < \varepsilon. 
\end{equation}
Thus, using Lemma \ref{point_fixe_LP}, we obtain that for every $R$ large enough, there exists a solution $v_R$ to~\eqref{equationliouvillemodifiée} such that $\nabla v_R \in \left(L^p(\mathbb{R}^d) \cap L^2(\mathbb{R}^d)\right)^d$. In addition, since $\nabla v_R$ belongs to $\left(L^2(\mathbb{R}^d)\right)^d$, we have for every $x\in \mathbb{R}^d$ :
$$\int_{Q+x} |\nabla v_R| \leq \left(\int_{Q+x} |\nabla v_R|^2\right)^{1/2} \leq \|\nabla v_R\|_{L^2(\mathbb{R}^d)}.$$
Consequently, $\mathcal{M}(|\nabla v_R|)$ is uniformly bounded with respect to $x$ and belongs to $L^2_{unif}(\mathbb{R}^d)$. Since $\Tilde{a}_R \nabla u \in \left(\mathcal{C}^{0, \alpha}(\mathbb{R}^d)\right)^d$, the regularity result of Proposition \ref{regularite_A_P} gives that $\nabla v_R$ belongs to $\left(\mathcal{C}^{0,\alpha}(\mathbb{R}^d)\right)^d$. 
We next prove the existence of $R>0$ such that $\nabla u = \nabla v_{R}$.  

\textbf{Step 3 : Existence of $R$ such that $\nabla u = \nabla v_{R}$}. We know that $\nabla u \in \left(L^{\infty}(\mathbb{R}^d)\right)^d$ and Proposition \ref{regularite_A_P} therefore shows that $\nabla u$ belongs to $\left(\mathcal{C}^{0,\alpha}(\mathbb{R}^d)\right)^d$.
In the sequel, we denote $w = u-v_R$. Since $\nabla v_R \in \left(L^p(\mathbb{R}^d) \right)^d \subset \left( \mathcal{A}^p \right)^d$, we have $\nabla w \in \left(\mathcal{A}^p\right)^d$. In addition, $w$ is solution to 
$- \operatorname{div}((a_{per} + \Tilde{a}_R^C) \nabla w) = 0$
or equivalently, a solution to : 
\begin{equation}
\label{equation_liouville2_R}
    - \operatorname{div}(a_{per} \nabla w) = \operatorname{div}(\Tilde{a}^C_R \nabla w) \quad \text{on } \mathbb{R}^d.
\end{equation}
We have $\Tilde{a}^C_R \in \left(\mathcal{A}^p \cap \mathcal{C}^{0, \alpha}(\mathbb{R}^d)\right)^{d\times d}$ and $\nabla w \in \left(\mathcal{A}^p \cap \mathcal{C}^{0,\alpha}(\mathbb{R}^d)\right)^d$, a short calculation allows to show that $\Tilde{a}^C_R \nabla w$ also belongs to $\left(\mathcal{A}^p\cap \mathcal{C}^{0, \alpha}(\mathbb{R}^d)\right)^d$. We next remark that for every $\alpha \in ]0,1[$, we have $\mathcal{C}^{0, \alpha}(\mathbb{R}^d) \subset \mathcal{C}^{0,\alpha/2}(\mathbb{R}
^d)$. We apply the estimate of Corollary~\ref{régularité_AP_coeff_periodique} to equation~\eqref{equation_liouville2_R} and we obtain the existence of a constant $C>0$ independent of $w$, $R$ and $\Tilde{a}$ such that : 
\begin{equation}
\label{estim_holder_w_AP}
    \|\nabla w\|_{\mathcal{C}^{0, \alpha/2}(\mathbb{R}^d)} \leq C \left(\left\|\mathcal{M}\left(\left|\Tilde{a}^C_R \nabla w\right|\right)\right\|_{L^{p^*}(\mathbb{R}^d)}  + \|\Tilde{a}^C_R \nabla w\|_{\mathcal{C}^{0, \alpha/2}(\mathbb{R}^d)}\right).
\end{equation}
Our aim is now to estimate each norm of the right-hand side in the previous inequality. 
Let $\varepsilon>0$. Since $\Tilde{a} \in \left(\mathcal{A}^p \cap \mathcal{C}^{0,\alpha}(\mathbb{R}^d)\right)^{d\times d}$ and $\mathcal{M}(|\Tilde{a}|) \in L^{p^*}(\mathbb{R}^d)$, there exists $R_1>0$ such that for every $R>R_1$ we have $\|\mathcal{M}(|\Tilde{a}|)\|_{L^{p^*}(\mathbb{R}^d \setminus{B_R})} \leq \varepsilon$. It follows : 
\begin{align*}
\left\|\mathcal{M}\left(\left|\Tilde{a}^C_R \nabla w\right|\right)\right\|_{L^{p^*}(\mathbb{R}^d)} & \leq \|\mathcal{M}\left(\left|\Tilde{a} \nabla w\right|\right)\|_{L^{p^*}(\mathbb{R}^d\setminus{B_R})} \leq \|\mathcal{M}\left(\left|\Tilde{a} \right|\right)\|_{L^{p^*}(\mathbb{R}^d\setminus{B_R})} \|\nabla w\|_{L^{\infty}(\mathbb{R}^d)}. 
\end{align*}
If we therefore consider $R \geq R_1$, we obtain : 
\begin{equation}
\label{estim_norm1_deltap}
\left\|\mathcal{M}\left(\left|\Tilde{a}^C_R \nabla w\right|\right)\right\|_{L^{p^*}(\mathbb{R}^d)}
\leq \varepsilon \|\nabla w\|_{\mathcal{C}^{0, \alpha/2}(\mathbb{R}^d)}.
\end{equation}

In the sequel, for $\beta \in ]0,1[$ we denote 
$\displaystyle [f]_{\mathcal{C}^{0,\beta}(\mathbb{R}^d)} = \sup_{x,y \in \mathbb{R}^d, x \neq y} \dfrac{|f(x)-f(y)|}{|x-y|^{\beta}}$.
We next remark that : 
\begin{equation}
\label{estim_norm_2_CO_ap}
    \left\|\Tilde{a}^C_R \nabla w\right\|_{\mathcal{C}^{0, \alpha/2}(\mathbb{R}^d)} \leq 2 \left\|\Tilde{a}^C_R \right\|_{\mathcal{C}^{0, \alpha/2}(\mathbb{R}^d)} \left\| \nabla w \right\|_{\mathcal{C}^{0, \alpha/2}(\mathbb{R}^d)}.
\end{equation}

Since $\Tilde{a} \in \left(\mathcal{C}^{0, \alpha}(\mathbb{R}^d)\right)^{d\times d}$, for every $x,y \in \mathbb{R}^d$ we have using \eqref{hyp_rayon_a_petit} : 
\begin{align*}
  \left|\Tilde{a}^C_R(x) - \Tilde{a}^C_R(y)\right| & \leq \sqrt{2} \|\Tilde{a}^C_R\|_{L^{\infty}(\mathbb{R}^d)}^{1/2} \left|\Tilde{a}^C_R(x) - \Tilde{a}^C_R(y)\right|^{1/2}
   \leq \sqrt{2 \varepsilon}\left\|\Tilde{a}^C_R\right\|_{\mathcal{C}^{0,\alpha}(\mathbb{R}^d)}^{1/2}|x-y|^{\alpha/2}.
\end{align*}
In addition, for every $R>0$, we have : 
\begin{align*}
  \left|\Tilde{a}^C_R(x) - \Tilde{a}^C_R(y)\right| & \leq \left|\Tilde{a}(x) - \Tilde{a}(y)\right| |1-\chi_R(x)| + \left|\chi_R(x) - \chi_R(y)\right||\Tilde{a}(y)|\\
  & \leq \left(\|\Tilde{a}\|_{\mathcal{C}^{0, \alpha}(\mathbb{R}^d)} \|1-\chi_R\|_{L^{\infty}(\mathbb{R}^d)} + \|\chi_R\|_{\mathcal{C}^{0, \alpha}(\mathbb{R}^d)} \|\Tilde{a}\|_{L^{\infty}(\mathbb{R}^d)}\right)|x-y|^{\alpha}.
\end{align*}
Since  $\chi_R$ and $\nabla \chi_R$ are uniformly bounded with respect to $R$ in $L^{\infty}(\mathbb{R}^d)$, we deduce there exists a constant $C>0$ independent of $R$ such that $\left\|\Tilde{a}^C_R\right\|_{\mathcal{C}^{0,\alpha}(\mathbb{R}^d)} \leq C$. It follows that, if $R>R_0$, where $R_0$ is such that \eqref{hyp_rayon_a_petit} is satisfied for every $R>R_0$, then $[\Tilde{a}_R^C]_{\mathcal{C}^{0,\alpha/2}(\mathbb{R}^d)} \leq C \sqrt{\varepsilon}$.
Using \eqref{hyp_rayon_a_petit}, we deduce that $ \|\Tilde{a}_R^C\|_{\mathcal{C}^{0,\alpha/2}(\mathbb{R}^d)} \leq C \sqrt{\varepsilon}$,
and as a consequence of \eqref{estim_norm_2_CO_ap},
\begin{equation}
\label{estim_norm_2_deltap}
    \left\|\Tilde{a}^C_R \nabla w\right\|_{\mathcal{C}^{0, \alpha/2}(\mathbb{R}^d)} \leq 2 C \sqrt{\varepsilon}\left\| \nabla w \right\|_{\mathcal{C}^{0, \alpha/2}(\mathbb{R}^d)}.
\end{equation}

Finally, if $R>\max(R_0,R_1)$, we can insert \eqref{estim_norm1_deltap} and \eqref{estim_norm_2_deltap} in \eqref{estim_holder_w_AP}, and we obtain the existence of a constant $C>0$ independent of $R$ and $\varepsilon$ such that : 
$$ \|\nabla w\|_{\mathcal{C}^{0, \alpha/2}(\mathbb{R}^d)} \leq C \sqrt{\varepsilon} \|\nabla w\|_{\mathcal{C}^{0, \alpha/2}(\mathbb{R}^d)}.$$
If $\varepsilon$ is small enough, we have $C\sqrt{\varepsilon}<1$, and we obtain 
$\|\nabla w\|_{\mathcal{C}^{0, \alpha/2}(\mathbb{R}^d)} = 0$.
We conclude that $\nabla w = 0$, that is $\nabla u = \nabla v_R \in \left(L^2(\mathbb{R}^d)\right)^d$. 

\textbf{Step 4 : Conclusion}. In the previous step we have established that $\nabla u = \nabla v_R \in \left(L^2(\mathbb{R}^d)\right)^d$. Since $p<d$ and $a$ is uniformly bounded and elliptic according to assumptions \eqref{hypothèses1}-\eqref{hypothèses2}, the result of uniqueness of \cite[Lemma 1]{blanc2012possible} for solution $u$ to \eqref{equation_unicité_general} with a gradient in $\left(L^p(\mathbb{R}^d)\right)^d$ shows that $\nabla u =0$.
\end{proof}

We are now in position to establish the continuity of the operator $\nabla \left(- \operatorname{div}(a \nabla .)\right) ^{-1} \operatorname{div}$ from $\left(\mathcal{A}^p \cap~\mathcal{C}^{0,\alpha}(\mathbb{R}^d)\right)^d$ to $\left(\mathcal{A}^p \cap \mathcal{C}^{0,\alpha}(\mathbb{R}^d)\right)^d$.

\begin{lemme}
\label{estimee_A_priori_Ap}
Let $a_{per} \in  \left(L^2_{per}(\mathbb{R}^d)\right)^{d \times d}$ and $\Tilde{a} \in \left(\mathcal{A}^p\right)^{d\times d}$ for $p\in ]1,d[$.  Assume that $a = a_{per} + \Tilde{a}$ satisfies \eqref{hypothèses1}, \eqref{hypothèse_a_elliptic_AP} and \eqref{hypothèse_a_holder_AP}. There exists a constant $C>0$ such that for every $f \in \left(\mathcal{A}^p\cap \mathcal{C}^{0, \alpha}(\mathbb{R}^d)\right)^d$ and $u$ solution to
$ - \operatorname{div}(a \nabla u) = \operatorname{div}(f)$ on $\mathbb{R}^d$ 
with $\nabla u \in \left(\mathcal{A}^p\cap \mathcal{C}^{0, \alpha}(\mathbb{R}^d)\right)^d$, we have : 
\begin{equation}
    \|\nabla u\|_{\mathcal{A}^p} + \|\nabla u\|_{\mathcal{C}^{0, \alpha}(\mathbb{R}^d)} \leq C \left(  \|f\|_{\mathcal{A}^p} + \|f\|_{\mathcal{C}^{0, \alpha}(\mathbb{R}^d)}\right).
\end{equation}

\end{lemme}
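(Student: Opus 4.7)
The strategy I would follow closely parallels the one used in the proof of Lemma~\ref{unicite_generale_AP}. First I would rewrite the equation as
$$ -\operatorname{div}(a_{per}\nabla u) = \operatorname{div}(f+\Tilde{a}\nabla u) \quad \text{on } \mathbb{R}^d, $$
so that the estimates available for the periodic case (Proposition~\ref{existence_cas_periodique_AP}, together with the Hölder continuity bound of Corollary~\ref{régularité_AP_coeff_periodique}) give a constant $C>0$ such that
$$ \|\nabla u\|_{\mathcal{A}^p}+\|\nabla u\|_{\mathcal{C}^{0,\alpha}(\mathbb{R}^d)}\leq C\bigl(\|f+\Tilde{a}\nabla u\|_{\mathcal{A}^p}+\|f+\Tilde{a}\nabla u\|_{\mathcal{C}^{0,\alpha}(\mathbb{R}^d)}\bigr). $$
The difficulty is of course that the right-hand side still depends on $\nabla u$, so the whole point of the proof will be to control the term $\Tilde{a}\nabla u$ by a combination of $\|f\|_{\mathcal{A}^p}$, $\|f\|_{\mathcal{C}^{0,\alpha}(\mathbb{R}^d)}$ and a small portion of $\|\nabla u\|_{\mathcal{A}^p}+\|\nabla u\|_{\mathcal{C}^{0,\alpha/2}(\mathbb{R}^d)}$ that can be absorbed on the left-hand side.

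For this, I would introduce the cutoff $\chi_R$ used in Lemma~\ref{unicite_generale_AP} and write $\Tilde{a}=\chi_R\Tilde{a}+(1-\chi_R)\Tilde{a}$. The far-field piece $\Tilde{a}_R^C:=(1-\chi_R)\Tilde{a}$ can be made arbitrarily small: its $L^\infty$ norm tends to $0$ by Corollary~\ref{comportement_asymptotique_AP}, its $L^{p^*}$-averaged norm $\|\mathcal{M}(|\Tilde{a}_R^C|)\|_{L^{p^*}(\mathbb{R}^d)}$ vanishes as the tail of an $L^{p^*}$ function, and the interpolation argument from Step~3 of Lemma~\ref{unicite_generale_AP} gives $\|\Tilde{a}_R^C\|_{\mathcal{C}^{0,\alpha/2}(\mathbb{R}^d)}\to 0$. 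Working in $\mathcal{C}^{0,\alpha/2}$ rather than $\mathcal{C}^{0,\alpha}$, for $R$ large enough the contribution of $\Tilde{a}_R^C\nabla u$ on the right-hand side is absorbed into the left-hand side. This yields, for such $R$,
$$ \|\nabla u\|_{\mathcal{A}^p}+\|\nabla u\|_{\mathcal{C}^{0,\alpha/2}(\mathbb{R}^d)}\leq C_R\bigl(\|f\|_{\mathcal{A}^p}+\|f\|_{\mathcal{C}^{0,\alpha}(\mathbb{R}^d)}+\|\nabla u\|_{L^\infty(B_{R+1})}\bigr), $$
since the compactly supported piece $\chi_R\Tilde{a}\nabla u$ is controlled, in both $\mathcal{A}^p$ and $\mathcal{C}^{0,\alpha/2}$, by $\|\nabla u\|_{L^\infty(B_{R+1})}$ times a constant depending on $R$ and on $\|\Tilde{a}\|_{\mathcal{C}^{0,\alpha}(\mathbb{R}^d)}$.

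The final and main difficulty is to bound $\|\nabla u\|_{L^\infty(B_{R+1})}$ by $\|f\|_{\mathcal{A}^p}+\|f\|_{\mathcal{C}^{0,\alpha}(\mathbb{R}^d)}$, up to possibly redoing the argument for a larger $R$. I would proceed by contradiction: assume no such bound holds and extract sequences $(u_n,f_n)$ satisfying the equation with $\|\nabla u_n\|_{L^\infty(B_{R+1})}=1$ and $\|f_n\|_{\mathcal{A}^p}+\|f_n\|_{\mathcal{C}^{0,\alpha}(\mathbb{R}^d)}\to 0$. The estimate displayed above then bounds $\|\nabla u_n\|_{\mathcal{A}^p}+\|\nabla u_n\|_{\mathcal{C}^{0,\alpha/2}(\mathbb{R}^d)}$ uniformly, so Arzelà–Ascoli provides a subsequence with $\nabla u_n\to\nabla u_\infty$ locally uniformly; passing to the limit in the distributional sense gives $-\operatorname{div}(a\nabla u_\infty)=0$. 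Lower semi-continuity of the local-average $L^{p^*}$ norm and of the discrete-gradient $L^p$ norm under pointwise convergence (Fatou) shows $\nabla u_\infty\in(\mathcal{A}^p\cap L^\infty(\mathbb{R}^d))^d$, while local uniform convergence preserves the normalization $\|\nabla u_\infty\|_{L^\infty(B_{R+1})}=1$. Lemma~\ref{unicite_generale_AP} then forces $\nabla u_\infty=0$, contradicting the normalization and establishing the desired bound. The key obstacle is precisely this compactness/passage-to-the-limit step, where one must simultaneously preserve enough regularity to make the limit meaningful and enough decay to place $\nabla u_\infty$ in the uniqueness class of Lemma~\ref{unicite_generale_AP}.
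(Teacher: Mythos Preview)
Your strategy and the paper's are built from the same ingredients: rewrite the equation with periodic principal part, invoke the periodic estimates of Proposition~\ref{existence_cas_periodique_AP} and Corollary~\ref{régularité_AP_coeff_periodique}, and close via Arzel\`a--Ascoli together with the uniqueness Lemma~\ref{unicite_generale_AP}. The organization differs. The paper argues by contradiction directly on the full norm: it normalizes $\|\nabla u_n\|_{\mathcal{A}^p}+\|\nabla u_n\|_{\mathcal{C}^{0,\alpha}}=1$ with $\|f_n\|\to 0$, extracts a locally uniform limit $\nabla u$, shows $\nabla u\in(\mathcal{A}^p\cap L^\infty)^d$ by Fatou/weak lower semicontinuity, applies Lemma~\ref{unicite_generale_AP} to get $\nabla u=0$, and \emph{then} proves $\|\tilde a\nabla u_n\|_{\mathcal{A}^p}\to 0$ and $\|\nabla u_n\|_{L^2_{\mathrm{unif}}}\to 0$ by a near/far splitting (choose $R$ so the tails of $\tilde a$ and $\delta\tilde a$ are small, then use $\nabla u_n\to 0$ on $B_R$). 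You instead perform the near/far split \emph{deterministically} first, absorb the far-field part, and contradict only on the local quantity $\|\nabla u\|_{L^\infty(B_{R+1})}$. This is a legitimate variant, and the contradiction step is cleaner since the normalization is on a compact set where uniform convergence suffices.

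Two small repairs are needed in your absorption step. First, the discrete-gradient part of $\|\tilde a_R^C\nabla u\|_{\mathcal{A}^p}$ contains the cross-term $(\delta_i\chi_R)\,\tilde a\,\tau_{e_i}\nabla u$, supported in the transition annulus near $\partial B_R$. Its $L^p$ norm is \emph{not} small in $R$ (it scales like $\|\tilde a\|_{L^\infty(\text{annulus})}|B_{R+2}|^{1/p}$, and you have no decay rate for $\tilde a$), so it cannot be absorbed; it must be thrown onto the compact right-hand side as part of $C_R\|\nabla u\|_{L^\infty(B_{R'})}$ for some $R'>R+1$. This is harmless for the rest of the argument but should be said. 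Second, since you work in $\mathcal{C}^{0,\alpha/2}$ to make the far-field H\"older norm of $\tilde a_R^C$ small, you need a final upgrade to $\mathcal{C}^{0,\alpha}$: once $\|\nabla u\|_{L^\infty}$ is controlled, Proposition~\ref{regularite_A_P} applied to the original equation $-\operatorname{div}(a\nabla u)=\operatorname{div}(f)$ gives $\|\nabla u\|_{\mathcal{C}^{0,\alpha}}\leq C(\|\mathcal{M}(|\nabla u|)\|_{L^2_{\mathrm{unif}}}+\|f\|_{\mathcal{C}^{0,\alpha}})$, which closes the estimate.
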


\begin{proof}
We argue by contradiction. We assume the existence of two sequences $u_n$ and $f_n$ such that for every $n\in \mathbb{N}$ we have $\nabla u_n, f_n \in \left(\mathcal{A}^p \cap \mathcal{C}^{0, \alpha}(\mathbb{R}^d)\right)^d$ and : 
\begin{equation}
\label{equation_suite_lemme_estimation}
    - \operatorname{div}(a \nabla u_n) = \operatorname{div}(f_n),
\end{equation}
\begin{equation}
\label{hypothèse_borne_un_lemme_estimee}
   \|\nabla u_n\|_{\mathcal{A}^p} + \|\nabla u_n\|_{\mathcal{C}^{0, \alpha}(\mathbb{R}^d)}  = 1,
\end{equation}
\begin{equation}
\label{hyplimite_fn_AP}
    \lim_{n \rightarrow \infty} \|f_n\|_{\mathcal{A}^p} + \|f_n\|_{\mathcal{C}^{0, \alpha}(\mathbb{R}^d)} = 0.
\end{equation}

Since $\nabla u_n$ is bounded uniformly with respect to $n$ for the topology of $\mathcal{C}^{0, \alpha}(\mathbb{R}^d)$, the Arzela-Ascoli theorem shows the uniform convergence of $\nabla u_n$ (up to an extraction) on every compact of $\mathbb{R}^d$ to a gradient $\nabla u \in \left(L^{\infty}(\mathbb{R}^d)\right)^d$. Consequently, if we consider the limit in \eqref{equation_suite_lemme_estimation} when $n\to \infty$, we obtain that $\nabla u$ is solution to $-\operatorname{div}(a \nabla u) = 0 \quad \text{on } \mathbb{R}^d$.

We next claim that $\nabla u$ belongs to $\left(\mathcal{A}^p\right)^d$. Indeed, since $\delta \nabla u_n$ is uniformly bounded with respect to $n$ in $\left(L^p(\mathbb{R}^d)\right)^{d\times d}$ for $p \in ]1,d[$, it weakly converges (up to an extraction) in this space and its limit is equal to $\delta \nabla u$ due to the uniqueness of the limit in the distribution sense. Moreover, since $p>1$, the $L^p$ norm is lower semi-continuous and we have 
$\displaystyle \|\delta \nabla u\|_{L^p(\mathbb{R}^d)} \leq \liminf_{n \rightarrow \infty} \|\delta \nabla u_n \|_{L^p(\mathbb{R}^d)} = 1.$
We also know that $\nabla u_n$ uniformly converges on every compact of $\mathbb{R}^d$, and consequently, that $\mathcal{M}(|\nabla u_n|)$ converges pointwise to $\mathcal{M}(|\nabla u|)$. Using the Fatou lemma, we obtain : 
\begin{align*}
    \int_{\mathbb{R}^d} \left|\mathcal{M}\left(\left|\nabla u\right| \right)(z)\right|^{p^*}dz & = \int_{\mathbb{R}^d} \liminf_{n \rightarrow \infty} \left|\mathcal{M}\left(\left|\nabla u_n\right| \right)(z)\right|^{p^*}dz \leq \liminf_{n \rightarrow \infty} \int_{\mathbb{R}^d}  \left|\mathcal{M}\left(\left|\nabla u_n\right| \right)(z)\right|^{p^*}dz \leq 1.
\end{align*}
It follows that $\nabla u$ belongs to $\left(\mathcal{A}^p \cap L^{\infty}(\mathbb{R}^d)\right)^d$ and the uniqueness result of Lemma~\ref{unicite_generale_AP} implies that $\nabla u = 0$. Our aim is now to prove that $\displaystyle \lim_{n \rightarrow \infty} \|\nabla u_n\|_{\mathcal{A}^p} + \|\nabla u_n\|_{\mathcal{C}^{0,\alpha}(\mathbb{R}^d)} =0$,
in order to reach a contradiction. We first remark that \eqref{equation_suite_lemme_estimation} is equivalent to :
\begin{equation}
\label{eq_equivalente_lemme_estimee}
    - \operatorname{div}(a_{per} \nabla u_n) = \operatorname{div}(\Tilde{a} \nabla u_n + f_n) \quad \text{on } \mathbb{R}^d.
\end{equation}
Since $\nabla u_n$ and $\Tilde{a}$ both belong to $L^{\infty}$, we can easily show that $\Tilde{a} \nabla u_n$ belongs to $\mathcal{A}^p$. Consequently, Proposition \ref{existence_cas_periodique_AP} gives the existence of a constant $C>0$ independent of $n$ such that : 
\begin{equation}
\label{majoration_un_Ap}
    \|\nabla u_n\|_{\mathcal{A}^p}\leq C\left(\|\Tilde{a}\nabla u_n\|_{\mathcal{A}^p} + \|f_n\|_{\mathcal{A}^p}\right). 
\end{equation}
We fix $\varepsilon>0$. Since $\Tilde{a}$ is H\"older-continuous according to assumption \eqref{hypothèse_a_holder_AP}, Corollary~\ref{comportement_asymptotique_AP} gives the existence of $R_1>0$ such that $\|\Tilde{a}\|_{L^{\infty}(\mathbb{R}^d \setminus{B_{R_1}})} < \frac{\varepsilon}{2}$. 
In addition, since $\delta \Tilde{a} \in \left(L^p(\mathbb{R}^d)\right)^{d\times d\times d}$ and $\nabla u_n$ is uniformly bounded for the norm of $L^{\infty}$ with respect to $n$, there exists $R_2>0$ such that for every $n\in \mathbb{N}$ :
\begin{equation}
    \| \delta_i \Tilde{a}\|_{L^p(\mathbb{R}^d \setminus{B_{R_2}})}\|\nabla u_n\|_{L^{\infty}(\mathbb{R}^d)} < \frac{\varepsilon}{2}.
\end{equation}
We next denote $R = \max(R_1,R_2)$. We have proved that $\nabla u_n$ uniformly converges to 0 on every compact of $\mathbb{R}^d$. Therefore, there exists $N \in \mathbb{N}$ such that for every $n \geq N$, we have : 
\begin{equation}
    \|\nabla u_n \|_{L^{\infty}(B_{R+1})} \left(\|\Tilde{a}\|_{L^p(B_R)} + 2\|\Tilde{a}\|_{\mathcal{A}^p}\right) < \frac{\varepsilon}{2}.
\end{equation}
For every $n \geq N$ and $i \in \{1,...,d\}$, we have : 
\begin{align}
\label{majoration_deltaa_nablau_n}
  \|\delta_i \left(\Tilde{a}\nabla u_n\right)\|_{L^p(\mathbb{R}^d)}&  \leq  \|\delta_i\Tilde{a} \tau_{e_i} \nabla u_n\|_{L^p(\mathbb{R}^d)} + \|\Tilde{a} \delta_i \nabla u_n\|_{L^p(\mathbb{R}^d)}.
\end{align}
We next prove that the right-hand side of the previous inequality converges to 0 when $n \to \infty$. We have~: 
\begin{align}
    \|\delta_i\Tilde{a} \tau_{e_i} \nabla u_n\|_{L^p(\mathbb{R}^d)} & \leq \|\delta_i\Tilde{a} \tau_{e_i} \nabla u_n\|_{L^p(B_R)} + \|\delta_i\Tilde{a} \tau_{e_i} \nabla u_n\|_{L^p(\mathbb{R}^d\setminus{B_R})}\\
    & \leq \| \Tilde{a} \|_{\mathcal{A}^p}\|\nabla u_n \|_{L^{\infty}(B_{R+1})} + \| \delta_i \Tilde{a}\|_{L^p(\mathbb{R}^d \setminus{B_R})}\|\nabla u_n\|_{L^{\infty}(\mathbb{R}^d)} 
    < \varepsilon.
\end{align}
Since the parameter $\varepsilon$ can be chosen arbitrarily small, it follows that $\displaystyle \lim_{n \rightarrow \infty} \|\delta_i\Tilde{a} \tau_{e_i} \nabla u_n\|_{L^p(\mathbb{R}^d)} = 0$.
Similarly: 
\begin{align*}
    \|\Tilde{a} \delta_i \nabla u_n\|_{L^p(\mathbb{R}^d)} 
    & \leq 2 \|\Tilde{a}\|_{L^p(B_R)}\|\nabla u_n\|_{L^{\infty}(B_{R+1})} + \|\Tilde{a}\|_{L^{\infty}(\mathbb{R}^d \setminus{B_R})} \|\nabla u_n\|_{\mathcal{A}^p}
    < \varepsilon,
\end{align*}
and $ \displaystyle \lim_{n \rightarrow \infty}  \|\Tilde{a} \delta_i \nabla u_n\|_{L^p(\mathbb{R}^d)} = 0$. 
Using \eqref{majoration_deltaa_nablau_n}, we therefore obtain that $\|\Tilde{a}\nabla u_n\|_{\mathcal{A}^p}$ converges to $0$ when $n \to \infty$. In addition, assumption \eqref{hyplimite_fn_AP} ensures that $f_n$ converges to $0$ in $\left(\mathcal{A}^p\right)^d$ and, according to inequality \eqref{majoration_un_Ap}, we obtain that $\displaystyle \lim_{n \rightarrow \infty} \|\nabla u_n\|_{\mathcal{A}^p} = 0$. 

The last step of the proof consists in showing that $\displaystyle \lim_{n\to \infty} \|\nabla u_n\|_{\mathcal{C}^{0,\alpha}(\mathbb{R}^d)} = 0$. Since $\nabla u_n$ is solution to \eqref{eq_equivalente_lemme_estimee}, the estimate established in Proposition \ref{existence_cas_periodique_AP} shows the existence of $C>0$ such that for every $n$, 
$$\| \nabla u_n \|_{L^2_{unif}} \leq C \left(\|\Tilde{a}\nabla u_n\|_{\mathcal{A}^p} + \|\Tilde{a}\nabla u_n\|_{L^{\infty}(\mathbb{R}^d)}\right).$$
A method similar to that presented above allows us to show that
$\| \nabla u_n \|_{L^2_{unif}}$ converges to $0$ when $n \to \infty$. The regularity estimate of Corollary \ref{régularité_AP_coeff_periodique} and assumption \eqref{hyplimite_fn_AP} therefore show that $\displaystyle \lim_{n\to \infty} \|\nabla u_n\|_{\mathcal{C}^{0,\alpha}(\mathbb{R}^d)} = 0$. We finally reach a contradiction with \eqref{hypothèse_borne_un_lemme_estimee} and we conclude the proof. 
\end{proof}

We are in position to prove the main result of this section, that is the existence and uniqueness of a solution to \eqref{equation_generale_Ap}.


\begin{lemme}
\label{lemme_existence_general_ap}
Let $a_{per} \in  \left(L^2_{per}(\mathbb{R}^d)\right)^{d \times d}$ and $\Tilde{a} \in \left(\mathcal{A}^p\right)^{d\times d}$ for $p\in ]1,d[$.  Assume that $a = a_{per} + \Tilde{a}$ satisfies \eqref{hypothèses1}, \eqref{hypothèse_a_elliptic_AP} and \eqref{hypothèse_a_holder_AP}. Let $f \in \left(\mathcal{A}^p\cap \mathcal{C}^{0,\alpha}(\mathbb{R}^d)\right)^d$, then, there exists a unique, up to an additive constant, function $u \in L^1_{loc}(\mathbb{R}^d)$ solution in $\mathcal{D}'(\mathbb{R}^d)$ to \eqref{equation_generale_Ap}
such that $\nabla u \in \left(\mathcal{A}^p\cap \mathcal{C}^{0, \alpha}(\mathbb{R}^d)\right)^d$. 
\end{lemme}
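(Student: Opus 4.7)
The plan is to prove this via a continuation argument along the family of coefficients $a_t := a_{per} + t\tilde{a}$ for $t \in [0,1]$, combining the a priori estimate of Lemma \ref{estimee_A_priori_Ap}, the uniqueness of Lemma \ref{unicite_generale_AP}, and the existence already known in the periodic case (Proposition \ref{existence_cas_periodique_AP} with Corollary \ref{régularité_AP_coeff_periodique}). Uniqueness for the lemma is immediate: if $u_1,u_2$ are two solutions, then $w = u_1-u_2$ satisfies $-\operatorname{div}(a\nabla w)=0$ with $\nabla w \in (\mathcal{A}^p\cap L^\infty(\mathbb{R}^d))^d$ (since $\mathcal{C}^{0,\alpha}(\mathbb{R}^d) \subset L^\infty(\mathbb{R}^d)$), so Lemma \ref{unicite_generale_AP} yields $\nabla w = 0$.

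For existence, I would first observe that each $a_t$ satisfies the standing assumptions uniformly in $t \in [0,1]$: the ellipticity bound follows from $\xi^T a_t \xi = (1-t)\xi^T a_{per}\xi + t\xi^T a \xi \geq \lambda |\xi|^2$, and the Hölder bound is controlled by $\|a_{per}\|_{\mathcal{C}^{0,\alpha}} + \|\tilde{a}\|_{\mathcal{C}^{0,\alpha}}$. Let $I \subset [0,1]$ denote the set of $t$ for which, for every $f \in (\mathcal{A}^p \cap \mathcal{C}^{0,\alpha}(\mathbb{R}^d))^d$, the equation $-\operatorname{div}(a_t \nabla u) = \operatorname{div}(f)$ admits a solution with $\nabla u \in (\mathcal{A}^p \cap \mathcal{C}^{0,\alpha}(\mathbb{R}^d))^d$. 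The target is $I = [0,1]$, obtained by showing $I$ is non-empty, open, and closed in $[0,1]$. Non-emptiness ($0 \in I$) follows from Proposition \ref{existence_cas_periodique_AP} together with Corollary \ref{régularité_AP_coeff_periodique}.

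Openness at $t_0 \in I$ is obtained by a Picard iteration: rewrite the equation for $a_t$ as $-\operatorname{div}(a_{t_0}\nabla u) = \operatorname{div}(f+(t-t_0)\tilde{a}\nabla u)$ and define $u_{n+1}$ as the solution of the corresponding equation with $u_n$ on the right-hand side (well-defined since $t_0 \in I$ and $\tilde{a}\nabla u_n \in (\mathcal{A}^p \cap \mathcal{C}^{0,\alpha})^d$). The a priori estimate of Lemma \ref{estimee_A_priori_Ap} applied to $a_{t_0}$, with constant $C_{t_0}$, gives
\[
\|\nabla u_{n+1}-\nabla u_n\|_{\mathcal{A}^p} + \|\nabla u_{n+1}-\nabla u_n\|_{\mathcal{C}^{0,\alpha}} \leq C_{t_0}\,|t-t_0|\,\|\tilde{a}\|_{\mathcal{C}^{0,\alpha}\cap L^\infty}\bigl(\|\nabla u_n-\nabla u_{n-1}\|_{\mathcal{A}^p} + \|\nabla u_n-\nabla u_{n-1}\|_{\mathcal{C}^{0,\alpha}}\bigr),
\]
a contraction for $|t-t_0|$ small enough, so the iterates converge to a solution at $a_t$.

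For closedness, take $t_n \to t_\infty$ with $t_n \in I$ and fix $f$; let $u_n$ be the corresponding solutions. The contradiction proof of Lemma \ref{estimee_A_priori_Ap} only invokes the uniform ellipticity constant $\lambda$, uniform $\mathcal{C}^{0,\alpha}$ and $L^\infty$ bounds on $(a_t)_{t\in[0,1]}$, and the uniform control $\|t\tilde{a}\|_{\mathcal{A}^p} \leq \|\tilde{a}\|_{\mathcal{A}^p}$, all independent of $t$; hence the constant in the estimate may be taken uniform in $t \in [0,1]$, and $\|\nabla u_n\|_{\mathcal{A}^p} + \|\nabla u_n\|_{\mathcal{C}^{0,\alpha}}$ is bounded independently of $n$. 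Arzelà-Ascoli extracts a subsequence with $\nabla u_n$ converging locally uniformly to some $\nabla u_\infty \in (\mathcal{C}^{0,\alpha})^d$; weak compactness of $\delta \nabla u_n$ in $(L^p(\mathbb{R}^d))^{d\times d}$ (with distributional limit $\delta \nabla u_\infty$) together with Fatou applied to $\mathcal{M}(|\nabla u_n|)$ (exactly as in the proof of Lemma \ref{estimee_A_priori_Ap}) places $\nabla u_\infty$ in $(\mathcal{A}^p)^d$. Passing to the limit in the distributional equation yields $-\operatorname{div}(a_{t_\infty}\nabla u_\infty) = \operatorname{div}(f)$, and therefore $t_\infty \in I$. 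The main obstacle is precisely this uniformity of the constant in the a priori estimate along the one-parameter family $(a_t)_{t\in[0,1]}$: it requires carefully revisiting the compactness/contradiction argument of Lemma \ref{estimee_A_priori_Ap} to verify that no hidden dependence on the specific coefficient enters.
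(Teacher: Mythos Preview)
Your proposal is correct and follows essentially the same continuation argument as the paper: define $a_t = a_{per} + t\tilde a$, set $I$ to be the set of $t$ for which solvability holds, and show $I$ is non-empty (via Proposition \ref{existence_cas_periodique_AP} and Corollary \ref{régularité_AP_coeff_periodique}), open (via a contraction/fixed-point argument based on Lemma \ref{estimee_A_priori_Ap}), and closed. The only notable tactical difference is in the closedness step: the paper first proves a separate contradiction argument (mimicking the proof of Lemma \ref{estimee_A_priori_Ap}) to show that the constants $C_n$ along the sequence $s_n$ are bounded, and then shows $(\nabla u_n)$ is Cauchy in $\mathcal{A}^p\cap\mathcal{C}^{0,\alpha}$ using Lemma \ref{estimee_A_priori_Ap} applied at the limiting coefficient $a_s$; you instead argue that the constant in Lemma \ref{estimee_A_priori_Ap} is uniform over the whole family $(a_t)_{t\in[0,1]}$ and then pass to the limit by compactness (Arzel\`a--Ascoli, weak $L^p$ compactness of $\delta\nabla u_n$, and Fatou for $\mathcal{M}(|\nabla u_n|)$). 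Both routes are valid and rest on the same ingredients. One small correction: in your openness estimate the factor multiplying $|t-t_0|$ should involve $\|\tilde a\|_{\mathcal{A}^p}+\|\tilde a\|_{\mathcal{C}^{0,\alpha}}$ rather than just $\|\tilde a\|_{\mathcal{C}^{0,\alpha}\cap L^\infty}$, since controlling $\|\tilde a\,\nabla u\|_{\mathcal{A}^p}$ requires $\|\delta\tilde a\|_{L^p}$ and $\|\mathcal{M}(|\tilde a|)\|_{L^{p^*}}$.
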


\begin{proof}
We use here a method introduced in \cite[proof of Proposition 2.1]{blanc2018correctors} using the connectedness of the set $[0,1]$. In the sequel, we denote $a_s = a_{per} + s \Tilde{a}$ for every $s\in [0,1]$ and we consider the following assertion $\mathcal{P}(s) = $ "for every $f \in \left(\mathcal{A}^p\cap \mathcal{C}^{0, \alpha}(\mathbb{R}^d)\right)^d$, there exists a unique, up to an additive constant, function $u \in L^1_{loc}(\mathbb{R}^d)$ solution to $- \operatorname{div}(a_s \nabla u) = \operatorname{div}(f)$,
in $\mathcal{D}'(\mathbb{R}
^d)$ and such that $\nabla u \in \left(\mathcal{A}^p\cap \mathcal{C}^{0,\alpha}(\mathbb{R}^d)\right)^d$". We define the set $\mathcal{I} = \left\{ s \in [0,1] \ \middle| \ \mathcal{P}(s) \text{ is true}\right\}$.
Our aim is to prove that $s=1$ belongs to $\mathcal{I}$ establishing that $\mathcal{I}$ is non empty, open and closed for the topology of $[0,1]$.

\textbf{Step 1 : $\mathcal{I}$ is non empty}. The results of Proposition \ref{existence_cas_periodique_AP} and Corollary \ref{régularité_AP_coeff_periodique} show that $s=0$ belongs to $\mathcal{I}$.  

\textbf{Step 2 : $\mathcal{I}$ is open}. We assume there exists $s\in \mathcal{I}$ and we will show the existence of $\varepsilon>0$ such that $[s,s+ \varepsilon]$ is included in $\mathcal{I}$, that is that there exists $u$ solution to : 
\begin{equation}
\label{equation_ouvert}
  -\operatorname{div}((a_{per} + (s+ \varepsilon)\Tilde{a})\nabla u) = \operatorname{div}(f),  
\end{equation}
when $\varepsilon$ is sufficiently small. We first remark that the above equation is equivalent to : 
$$ - \operatorname{div}((a_{per}+ s \Tilde{a})\nabla u) = \operatorname{div}(\varepsilon \Tilde{a} \nabla u + f).$$
A simple calculation allows to show that if $\nabla u$ belongs to $\left(\mathcal{A}^p\cap \mathcal{C}^{0, \alpha}(\mathbb{R}^d)\right)^d$, then $\varepsilon \Tilde{a} \nabla u$ also belongs to this space. Therefore, the existence and uniqueness of a solution $u$ to \eqref{equation_ouvert} is equivalent to the existence and uniqueness of a solution to the fixed-point problem $ \nabla u = \Phi_s (\varepsilon \Tilde{a}\nabla u + f)$,
where $\Phi_s$ is the linear application $\nabla \left(-\operatorname{div}(a_s \nabla.)\right)^{-1} \operatorname{div}$ from $\left(\mathcal{A}^p\cap \mathcal{C}^{0,\alpha}(\mathbb{R}^d)\right)^d$ to $\left(\mathcal{A}^p \cap \mathcal{C}^{0,\alpha}(\mathbb{R}^d)\right)^d$. Since $s \in \mathcal{I}$, $\Phi_s$ is well defined and the result of Lemma \ref{estimee_A_priori_Ap} ensures that it is continuous for the norm $\|.\|_{\mathcal{A}^p} + \|.\|_{\mathcal{C}
^{0,\alpha}(\mathbb{R}^d)}$. 
We claim that the application $g \rightarrow \Phi_s(\varepsilon\Tilde{a}g + f)$ is a contraction if $\varepsilon$ is sufficiently small. First, if $g_1$ and $g_2$ are two functions of $\left(\mathcal{A}^p\cap \mathcal{C}^{0, \alpha}(\mathbb{R}^d)\right)^d$ and if we denote $\nabla v_1 = \Phi_s(\varepsilon\Tilde{a}g_1 + f)$ and $\nabla v_2 = \Phi_s(\varepsilon\Tilde{a}g_2 + f)$, $v_1 - v_2$ is solution to
$-\operatorname{div}(a_s \nabla (v_1 - v_2)) = \operatorname{div}(\varepsilon \Tilde{a}(g_1 - g_2))$.
The continuity estimate of Lemma \ref{estimee_A_priori_Ap} therefore shows the existence of a constant $C>0$ independent of $g_1$, $g_2$ and $\varepsilon$ such that : 
$$\|\nabla v_1 - \nabla v_2\|_{\mathcal{A}^p} + \|\nabla v_1 - \nabla v_2\|_{\mathcal{C}^{0, \alpha}(\mathbb{R}^d)} \leq C \varepsilon \left(\|\Tilde{a}(g_1 - g_2)\|_{\mathcal{A}^p} + \| \Tilde{a}(g_1 - g_2)\|_{\mathcal{C}^{0, \alpha}(\mathbb{R}^d)}\right).$$
In addition, we have : 
$$ \|\Tilde{a}(g_1 - g_2)\|_{\mathcal{A}^p} \leq \left(\|\Tilde{a}\|_{\mathcal{A}^p} + \|\Tilde{a}\|_{\mathcal{C}^{0, \alpha}(\mathbb{R}^d)}\right)\left(\|g_1 - g_2\|_{\mathcal{A}^p} + \|g_1 - g_2\|_{\mathcal{C}^{0, \alpha}(\mathbb{R}^d)}\right),$$
and : 
$$\| \Tilde{a}(g_1 - g_2)\|_{\mathcal{C}^{0, \alpha}(\mathbb{R}^d)} \leq 2 \|\Tilde{a}\|_{\mathcal{C}^{0, \alpha}(\mathbb{R}^d)}\|g_1 - g_2\|_{\mathcal{C}^{0, \alpha}(\mathbb{R}^d)}.$$
Thus, if $\varepsilon$ satisfies
$3 C \left(\|\Tilde{a}\|_{\mathcal{A}^p} + \|\Tilde{a}\|_{\mathcal{C}^{0, \alpha}(\mathbb{R}^d)}\right) \varepsilon < 1$,
the operator $g \rightarrow \Phi_s(\varepsilon\Tilde{a}g + f)$ is a contraction. Since 
$\left(\mathcal{A}^p \cap \mathcal{C}^{0,\alpha}(\mathbb{R}^d)\right)^d$ equipped with the associated norm is a Banach space, we can use the Banach fixed-point theorem. We deduce the existence and the uniqueness of a solution to \eqref{equation_ouvert}.  

\textbf{Step 3 : $\mathcal{I}$ is closed}. We assume the existence of a sequence $(s_n)$ of $\mathcal{I}$ that converges to $s \in [0,1]$. We want to show that $s$ belongs to $\mathcal{I}$. Let $f \in \left(\mathcal{A}^p\cap \mathcal{C}^{0, \alpha}(\mathbb{R}^d)\right)^d$. By assumption, for every $n \in \mathbb{N}$, there exists $u_n \in L^1_{loc}(\mathbb{R}^d)$ solution to :
\begin{equation}
\label{equation_I_fermé_Ap}
    - \operatorname{div}(a_{s_n}\nabla u_n) = \operatorname{div}(f),
\end{equation}
such that $\nabla u_n \in \left(\mathcal{A}^p \cap \mathcal{C}^{0, \alpha}(\mathbb{R}^d)\right)^d$. For every $n \in \mathbb{N}$, we use Lemma \ref{estimee_A_priori_Ap} to obtain the existence of a constant $C_n>0$ such that : 
$$\|\nabla u_n\|_{\mathcal{A}^p} + \|\nabla u_n\|_{\mathcal{C}^{0, \alpha}(\mathbb{R}^d)}\leq C_n \left( \|f\|_{\mathcal{A}^p} + \|f\|_{\mathcal{C}^{0, \alpha}(\mathbb{R}^d)}\right).$$
We first assume that $C_n$ is uniformly bounded with respect to $n$. in this case, $\nabla u_n$ is uniformly bounded with respect to $n$ in $\left(L^{\infty}(\mathbb{R}^d)\right)^d$. Up to an extraction, the sequences $\nabla u_n$ converges to a gradient $\nabla u$ for the weak-* topology of $L^{\infty}$. In addition, $a_{s_n} = a_{per} + s_n \Tilde{a}$ uniformly converges to $a_s$. We can consider the limit when $n \to \infty$ in \eqref{equation_I_fermé_Ap} and we obtain that $u$ is solution to $-\operatorname{div}(a_s \nabla u) = \operatorname{div}(f)$.

We next show that $\nabla u_n$ is a Cauchy sequence in $\left(\mathcal{A}^p\cap \mathcal{C}^{0,\alpha}(\mathbb{R}^d)\right)^d$ in order to conclude that $\nabla u$ also belongs to this space. Indeed, for every $m,n \in \mathbb{N}$, $u_n - u_m$ is solution to : 
$$-\operatorname{div}(a_s (\nabla u_n -\nabla u_m)) = \operatorname{div}((a_{s_n} - a_{s})\nabla u_n - (a_{s_m} - a_{s})\nabla u_m).$$
Since for every $n \in \mathbb{N}$, we have $(a_{s_n} - a_s)\nabla u_n \in \left(\mathcal{A}^p\cap \mathcal{C}^{0, \alpha}(\mathbb{R}^d)\right)^d$, Lemma \ref{estimee_A_priori_Ap} gives the existence of $\Tilde{C}>0$ independent of $n$ and $m$ such that : 
\begin{align*}
  \|\nabla u_n - \nabla u_m\|_{\mathcal{A}^p\cap \mathcal{C}^{0, \alpha}(\mathbb{R}^d)} & \leq \Tilde{C} \left( \|(a_{s_n} - a_{s})\nabla u_n\|_{\mathcal{A}^p\cap \mathcal{C}^{0, \alpha}(\mathbb{R}^d)} +  \|(a_{s_m} - a_{s})\nabla u_m\|_{\mathcal{A}^p\cap \mathcal{C}^{0, \alpha}(\mathbb{R}^d)}\right) \\
  & = \left( |s_n - s| \|\Tilde{a} \nabla u_n\|_{\mathcal{A}^p\cap \mathcal{C}^{0, \alpha}(\mathbb{R}^d)} +  |s_m -s| \|\Tilde{a}\nabla u_m\|_{\mathcal{A}^p\cap \mathcal{C}^{0, \alpha}(\mathbb{R}^d)}\right),
\end{align*}
where we have denoted
$\|.\|_{\mathcal{A}^p\cap \mathcal{C}^{0, \alpha}(\mathbb{R}^d)} = \|.\|_{\mathcal{A}^p} + \|.\|_{\mathcal{C}^{0,\alpha}(\mathbb{R}^d)}$.
Since for every $n$, $\nabla u_n$ is uniformly bounded in $\left(\mathcal{A}^p\cap \mathcal{C}^{0,\alpha}(\mathbb{R}^d)\right)^d$  with respect to $n$ and since $s_n$ converges to $s$, we deduce that $\nabla u_n$ is a Cauchy sequence in $\left(\mathcal{A}^p\cap\mathcal{C}^{0,\alpha}(\mathbb{R}^d)\right)^d$. This space being a Banach space, we have $\nabla u \in \left(\mathcal{A}^p\cap\mathcal{C}^{0,\alpha}(\mathbb{R}^d)\right)^d$. The uniqueness result being established in Lemma~\ref{unicite_generale_AP}, we therefore obtain that $s\in \mathcal{I}$. 

To conclude this step, we have to show that $C_n$ is uniformly bounded with respect to $n$. To this end, we assume the existence of two sequences $(f_n)$ and $(\nabla u_n)$ of $\left(\mathcal{A}^p\cap \mathcal{C}^{0, \alpha}(\mathbb{R}^d)\right)^d$ such that $ \displaystyle \lim_{n \rightarrow \infty} \|f_n\|_{\mathcal{A}^p\cap \mathcal{C}^{0, \alpha}(\mathbb{R}^d)} = 0$,  $\|\nabla u_n\|_{\mathcal{A}^p\cap \mathcal{C}^{0, \alpha}(\mathbb{R}^d)} = 1$ and $- \operatorname{div}(a_{s_n} \nabla u_n) = \operatorname{div}(f_n)$ on $\mathbb{R}^d$.
We remark that for every $n$,  we have $- \operatorname{div}(a_s \nabla u_n) = \operatorname{div}((s-s_n) \Tilde{a} \nabla u_n + f_n)$.
Since $\nabla u_n$ is bounded for the norm of $\left(\mathcal{A}^p\cap \mathcal{C}^{0, \alpha}(\mathbb{R}^d)\right)^d$ and that $s_n$ converges to $s$, we deduce that $\displaystyle \lim_{n \rightarrow \infty} \|(s-s_n) \Tilde{a} \nabla u_n\|_{\mathcal{A}^p\cap \mathcal{C}^{0, \alpha}(\mathbb{R}^d)} = 0$. We conclude the proof exactly as in the proof of Lemma \ref{estimee_A_priori_Ap}. 

\textbf{Step 4 : Conclusion}. We have established that $\mathcal{I}$ is non-empty, open, and closed for the topology of $[0,1]$. The connectedness of this set therefore gives $\mathcal{I} = [0,1]$. In particular, $1\in \mathcal{I}$ and we can conclude.  
\end{proof}

\begin{prop}
\label{correcteur_adapte_borne_ap}
Assume $1<p<\frac{d}{2}$. Then, under the assumptions of Lemma \ref{lemme_existence_general_ap}, the unique solution $u$ to \eqref{equation_generale_Ap} such that $\nabla u \in \left(\mathcal{A}^p\cap \mathcal{C}^{0,\alpha}(\mathbb{R}^d)\right)^d$ satisfies $u \in L^{\infty}(\mathbb{R}^d)$.
\end{prop}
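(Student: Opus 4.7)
The idea is to reinterpret the equation for $u$ as a purely periodic equation with a modified right-hand side, and then invoke the bounded-solution result of Lemma \ref{lemme_borne_u_per_ap} available in the periodic setting. Indeed, rewriting
\begin{equation*}
-\operatorname{div}(a_{per}\nabla u)=\operatorname{div}\bigl(f+\Tilde{a}\nabla u\bigr)=:\operatorname{div}(F)\qquad\text{on }\mathbb{R}^d,
\end{equation*}
places us in the framework of Lemma \ref{Lemme_forme_u_per_ap}, provided $F$ has the right integrability and discrete-gradient properties. The main step is therefore to check that $F\in\left(\mathcal{A}^p\cap L^{\infty}(\mathbb{R}^d)\right)^d$; the bound on $u$ then follows at once.

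First, since $f\in\left(\mathcal{A}^p\cap\mathcal{C}^{0,\alpha}(\mathbb{R}^d)\right)^d$, we already have $f\in\left(\mathcal{A}^p\cap L^{\infty}(\mathbb{R}^d)\right)^d$. For the term $\Tilde{a}\nabla u$, we use that $\Tilde{a}\in\left(L^{\infty}(\mathbb{R}^d)\right)^{d\times d}$ (from \eqref{hypothèses2} applied to $\Tilde{a}=a-a_{per}$, using that $a_{per}$ is bounded by Proposition \ref{expliciteperiodiqueap}) and $\nabla u\in\left(\mathcal{C}^{0,\alpha}(\mathbb{R}^d)\right)^d\subset\left(L^{\infty}(\mathbb{R}^d)\right)^d$, whence $\Tilde{a}\nabla u\in\left(L^{\infty}(\mathbb{R}^d)\right)^d$. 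Moreover, pointwise $\mathcal{M}(|\Tilde{a}\nabla u|)\leq\|\Tilde{a}\|_{L^{\infty}}\mathcal{M}(|\nabla u|)\in L^{p^*}(\mathbb{R}^d)$, and the discrete Leibniz-type rule
\begin{equation*}
\delta_i(\Tilde{a}\nabla u)=(\delta_i\Tilde{a})\,\tau_{e_i}(\nabla u)+\Tilde{a}\,\delta_i(\nabla u),
\end{equation*}
together with $\delta\Tilde{a}\in\left(L^p(\mathbb{R}^d)\right)^{d\times d\times d}$, $\nabla u\in L^{\infty}$, $\Tilde{a}\in L^{\infty}$ and $\delta\nabla u\in L^p$ (because $\nabla u\in\mathcal{A}^p$), shows that $\delta(\Tilde{a}\nabla u)\in L^p(\mathbb{R}^d)$. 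Hence $\Tilde{a}\nabla u\in\left(\mathcal{A}^p\cap L^{\infty}(\mathbb{R}^d)\right)^d$, and so is $F$.

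Now Lemma \ref{Lemme_forme_u_per_ap} applies (thanks to $1<p<d/2$) to the periodic problem with datum $F$: the function
\begin{equation*}
v:=TF=\int_{\mathbb{R}^d}\nabla_y G_{per}(\cdot,y)\,F(y)\,dy
\end{equation*}
is a well-defined solution in $L^1_{loc}(\mathbb{R}^d)$ of $-\operatorname{div}(a_{per}\nabla v)=\operatorname{div}(F)$ with $\nabla v\in\left(\mathcal{A}^p\right)^d$, and Lemma \ref{lemme_borne_u_per_ap} gives $v\in L^{\infty}(\mathbb{R}^d)$ since $F\in L^{\infty}(\mathbb{R}^d)$. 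Finally, since both $u$ and $v$ solve the same equation $-\operatorname{div}(a_{per}\nabla\cdot)=\operatorname{div}(F)$ with gradients in $\left(\mathcal{A}^p\right)^d$, the uniqueness result of Lemma \ref{unicité_periodique_AP} yields $\nabla(u-v)=0$, so $u-v$ is a constant. Therefore $u\in L^{\infty}(\mathbb{R}^d)$, which concludes the proof.

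There is essentially no genuine obstacle here: the whole argument rests on the Leibniz-type computation showing $\Tilde{a}\nabla u\in\mathcal{A}^p$, which is the only nontrivial algebraic check, and on the availability of the bounded-solution statement in the periodic setting provided by Lemma \ref{lemme_borne_u_per_ap}, whose proof crucially used the restriction $p<d/2$ (to ensure integrability of $|y|^{-(d-1)}$ against the dual of $L^{p^*}$ at infinity). This same restriction is therefore the reason why the present proposition is stated only for $p<d/2$.
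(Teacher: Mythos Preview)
Your proof is correct and follows essentially the same approach as the paper: rewrite \eqref{equation_generale_Ap} as a periodic equation with right-hand side $F=f+\Tilde{a}\nabla u$, verify that $F\in\left(\mathcal{A}^p\cap L^{\infty}(\mathbb{R}^d)\right)^d$, and then combine Lemma~\ref{lemme_borne_u_per_ap} with the uniqueness result of Lemma~\ref{unicité_periodique_AP}. You have simply made explicit the Leibniz-type identity and the intermediate invocation of Lemma~\ref{Lemme_forme_u_per_ap} that the paper leaves implicit.
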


\begin{proof}
We remark that \eqref{equation_generale_Ap} is equivalent to $-\operatorname{div}(a_{per} \nabla u) = \operatorname{div}(g)$ where $g:= f + \Tilde{a}\nabla u$. Since $\Tilde{a}$ and $\nabla u$ both belongs to $\left(\mathcal{A}^p\cap \mathcal{C}^{0,\alpha}(\mathbb{R}^d)\right)^d$, we can easily show that $g$ also belongs to $\left(\mathcal{A}^p\cap \mathcal{C}^{0,\alpha}(\mathbb{R}^d)\right)^d$. Given $1<p<\frac{d}{2}$, we conclude that $u \in L^{\infty}(\mathbb{R}^d)$ using both Lemma~\ref{lemme_borne_u_per_ap} and the uniqueness result of Lemma~\ref{unicité_periodique_AP}.  
\end{proof}

\subsection{Existence of an adapted corrector and homogenization results}

\label{Subsection_homog_ap}

The well-posedness of \eqref{equation_generale_Ap} now allows for a proof of Theorem \ref{Correcteur_A_P}. We first establish the existence of a corrector adapted to our particular problem \eqref{eq_correcteur_A_p} and we next use it to identify the limit of the sequence $u^{\varepsilon}$, solution to \eqref{equationepsilon_new}.

\begin{proof}[Proof of Theorem \ref{Correcteur_A_P}]
As a consequence of Proposition \ref{expliciteperiodiqueap}, we have that $a_{per}$ and $\Tilde{a}$ satisfy the properties of ellipticity \eqref{hypothèse_a_elliptic_AP} and regularity \eqref{hypothèse_a_holder_AP}. We next remark that \eqref{eq_correcteur_A_p} is equivalent to
$-\operatorname{div}(a \nabla \Tilde{w}_q) = \operatorname{div}(\Tilde{a}(q + \nabla w_{per,q}))$ and
we denote $f = \Tilde{a}(q + \nabla w_{per,q})$. Since $a_{per}$ belongs to $\left(\mathcal{C}^{0, \alpha}(\mathbb{R}^d)\right)^{d\times d}$, a classical regularity property of elliptic equations shows that $\nabla w_{per,q}$ belongs to $\left(\mathcal{C}^{0, \alpha}(\mathbb{R}^d)\right)^d$. Using the periodicity of $\nabla w_{per,q}$, we also have $f \in \left(\mathcal{A}^p\right)^d$. In addition, since $\Tilde{a}$ belongs to $\left(\mathcal{C}^{0,\alpha}(\mathbb{R}^d)\right)^{d\times d}$, we deduce that $f \in \left(\mathcal{C}^{0,\alpha}(\mathbb{R}^d)\right)^d$. Existence and uniqueness (up to an additive constant) of $\Tilde{w}_q$ solution to \eqref{eq_correcteur_A_p} such that $\nabla \Tilde{w}_q \in \left( \mathcal{A}^p \cap \mathcal{C}^{0, \alpha}(\mathbb{R}^d)\right)^d$ are therefore implied by Lemma~\ref{lemme_existence_general_ap}. The strict sub-linearity at infinity of $ \Tilde{w}_q$ is a consequence of Proposition \ref{Sous_linéarite_AP}.

Next we denote $w = (w_{per, e_i} + \Tilde{w}_{e_i})_{i \in \{1,...,d\}}$, where $\Tilde{w}_{e_i}$ is the corrector solution to \eqref{eq_correcteur_A_p} when $q=e_i$. The general homogenization theory for equations in divergence form (see for example \cite[Chapter 6, Chapter 13]{tartar2009general}) shows that, up to an extraction, the sequence $u^{\varepsilon}$ converges (strongly in $L^2$, weakly in $H^1$) to a function $u^* \in H^1_0(\Omega)$ solution to
$-\operatorname{div}(a^* \nabla u^*) = f$.
For every $1 \leq i,j \leq d$, the homogenized matrix-valued coefficient $a^*$ associated with $a$ is given by $\displaystyle \left[ a^* \right]_{i,j} = \operatorname{weak} \lim_{\varepsilon \rightarrow 0} a(./\varepsilon)(I_d + \nabla w(./\varepsilon))$,
where the weak limit is considered in $L^{2}(\Omega)^{d \times d}$. Since $\Tilde{a}$ and $\nabla w_{e_i}$ both belong to $\mathcal{A}^p \cap \mathcal{C}^{0, \alpha}(\mathbb{R}^d)$, Corollary  \ref{convergenceLinfinistar_AP} implies the convergence to 0 of $|\Tilde{a}|(./\varepsilon)$ and  $|\nabla \Tilde{w}_{e_i}|(./ \varepsilon)$ when $\varepsilon \to 0$ for the weak-* topology of $L^{\infty}(\Omega)$. In particular, we have for every $\varphi \in \left(\mathcal{D}(\mathbb{R^d})\right)^d$ : 
$$\left|\int_{\Omega} \Tilde{a}(x/\varepsilon) \nabla \Tilde{w}_{e_i}(x/\varepsilon).\varphi(x) dx \right| \leq \|\Tilde{a} \|_{L^{\infty}(\mathbb{R}^d)} \int_{\Omega}  |\nabla \Tilde{w}_{e_i}(x/\varepsilon)|.|\varphi(x)| dx \stackrel{\varepsilon \to 0}{\longrightarrow} 0. $$
It follows that $ \displaystyle \operatorname{weak}\lim_{\varepsilon \rightarrow 0} \Tilde{a}(./\varepsilon) \nabla \Tilde{w}_{e_i}(./\varepsilon) = 0$.
We similarly have $\displaystyle \operatorname{weak}\lim_{\varepsilon \rightarrow 0} \Tilde{a}(./\varepsilon) \nabla w_{per, e_i}(./\varepsilon) = 0$ and $\displaystyle \operatorname{weak}\lim_{\varepsilon \rightarrow 0} a_{per}(./\varepsilon) \nabla \Tilde{w}_{e_i}(./\varepsilon) = 0$. Since $a = a_{per} + \Tilde{a}$ and $w_{e_i} = w_{per,e_i} + \Tilde{w}_{e_i}$, we obtain : 
$$ \left[ a^* \right]_{i,j} = \operatorname{weak} \lim_{\varepsilon \rightarrow 0} a_{per}(./\varepsilon)(I_d + \nabla w_{per}(./\varepsilon)) =  \left[ a_{per}^* \right]_{i,j}.$$
This limit being independent of the extraction, we deduce that the whole sequence $u^{\varepsilon}$ converges to $u^*$ and $a^* = a^*_{per}$.
\end{proof}

We conclude this section with a discussion regarding the rates of convergence of $u^{\varepsilon}$ to $u^*$. Similarly to the periodic case and in order to make precise the behavior of $\nabla u^{\varepsilon}$, we can consider a sequence of remainders $R^{\varepsilon}$ using the adapted corrector of Theorem~\ref{Correcteur_A_P} and defined by $\displaystyle R^{\varepsilon}(x) = u^{\varepsilon}(x) - u^*(x) - \varepsilon \sum_{j=1}^d \partial_j u^*(x) w_{e_j}\left(\dfrac{x}{\varepsilon}\right)$.
The homogenization results we have established in Theorem \ref{Correcteur_A_P} and, more generally, the results of Section~\ref{SectionAP_2}, allow to use the general results of \cite{blanc2018precised}, which performs a study of homogenization problem \eqref{equationepsilon_new} under general assumptions, provided one has sufficient regularity of the coefficient $a$ and the existence of a corrector with a prescribed rate of strict sub-linearity at infinity. More precisely, if $r\geq 2$, $f \in L^r(\Omega)$ and $\Omega$ is a $\mathcal{C}^{1,1}$ domain, \cite[Theorem 1.5]{blanc2018precised} shows estimates of the form :
\begin{equation}
\label{convergence_rate_ap_general}
    \|\nabla R^{\varepsilon}\|_{L^r(\Omega_1)} \leq C \varepsilon^{\beta} \|f\|_{L^r(\Omega)}, 
\end{equation}
for every $\Omega_1 \subset \subset \Omega$ and where the value of $\beta$ is related to the decreasing rate of $\varepsilon w_{e_i}\left(\frac{.}{\varepsilon}\right)$. In our particular setting, we obtain \eqref{convergence_rate_ap_general} with $\beta = \mu$, where
\begin{equation}
\label{convergence_rate_ap}
   \mu  := \left\{ \begin{array}{cc}
    \dfrac{d}{p^*} & \text{if } p > \dfrac{d}{2},  \\
    1 & \text{if } p < \dfrac{d}{2},
\end{array}
\right.
\end{equation}
is obtained as a direct consequence of Propositions~\ref{Sous_linéarite_AP} and~\ref{correcteur_adapte_borne_ap}.
On the other hand, We recall that Proposition~\ref{prop_lebesgue_inclusion} shows that $\Tilde{a}$ also belongs to $\left(L^q(\mathbb{R}^d)\right)^{d \times d}$ for $q = \dfrac{p^*(\alpha +d) -d}{\alpha}$ and the results of \cite{blanc2018correctors, blanc2015local, blanc2012possible} and \cite[Theorem 1.2]{blanc2018precised} give \eqref{convergence_rate_ap_general} with
\begin{equation}
\label{convergence_rate_ap_lq}
\beta = \nu  := \left\{ \begin{array}{cc}
    \dfrac{d}{q} & \text{if } q > d,  \\
    1 & \text{if } q < d.
\end{array}
\right.
\end{equation}
Since $q > p^*$, a simple calculation shows that $\mu$ is always larger than $\nu$ and the theoretical convergence rates are significantly improved when $\delta a \in \left(L^p(\mathbb{R}^d)\right)^{d\times d}$. We point out that this improvement is particularly relevant if one is interested in fine convergence properties of $u^{\varepsilon}$, that is for the topology of $W^{1,r}$ when $r$ is large, at which scale the local perturbations of $\mathcal{A}^p$ affect the periodic background. This comparison therefore shows the interest of the specific study performed in the present article.

\section{The homogenization problem when $p\geq d$}

\label{SectionAP_3}

We devote this section to the homogenization problem \eqref{equationepsilon_new} when $p \geq d$. 
In this case, the behavior of the functions of $\mathbf{A}^p$ can be very different from the case $p<d$. A Gagliardo-Nirenberg-Sobolev type inequality such as that of Proposition \ref{decomposition_fonctions_A_p} does not hold. We exhibit two counter-examples of coefficient $a$ satisfying assumptions \eqref{hypothèses1}, \eqref{hypothèses2}, \eqref{hypothèses22} and \eqref{hypothèses3}, respectively for $p>d$ and $p=d$, but for which $a$ can not be split as the sum of a periodic coefficient and a perturbation integrable at infinity. The reason is, the decay of $\delta a$ at infinity may be too slow to ensure the existence of a periodic limit of $a$ at infinity. We illustrate the phenomenon with two coefficients $a$ respectively in dimension $d=1$ and $d=2$ which slowly oscillate at infinity, typically as $\sin(\ln(x))$ or $\sin(\ln(\ln(x))$. For such coefficients, we show that the homogenization of problem \eqref{equationepsilon_new} is not possible, since such $u^{\varepsilon}$ has subsequences converging to different limits.

\begin{figure}[h!]
\centering
\includegraphics[scale=0.32]{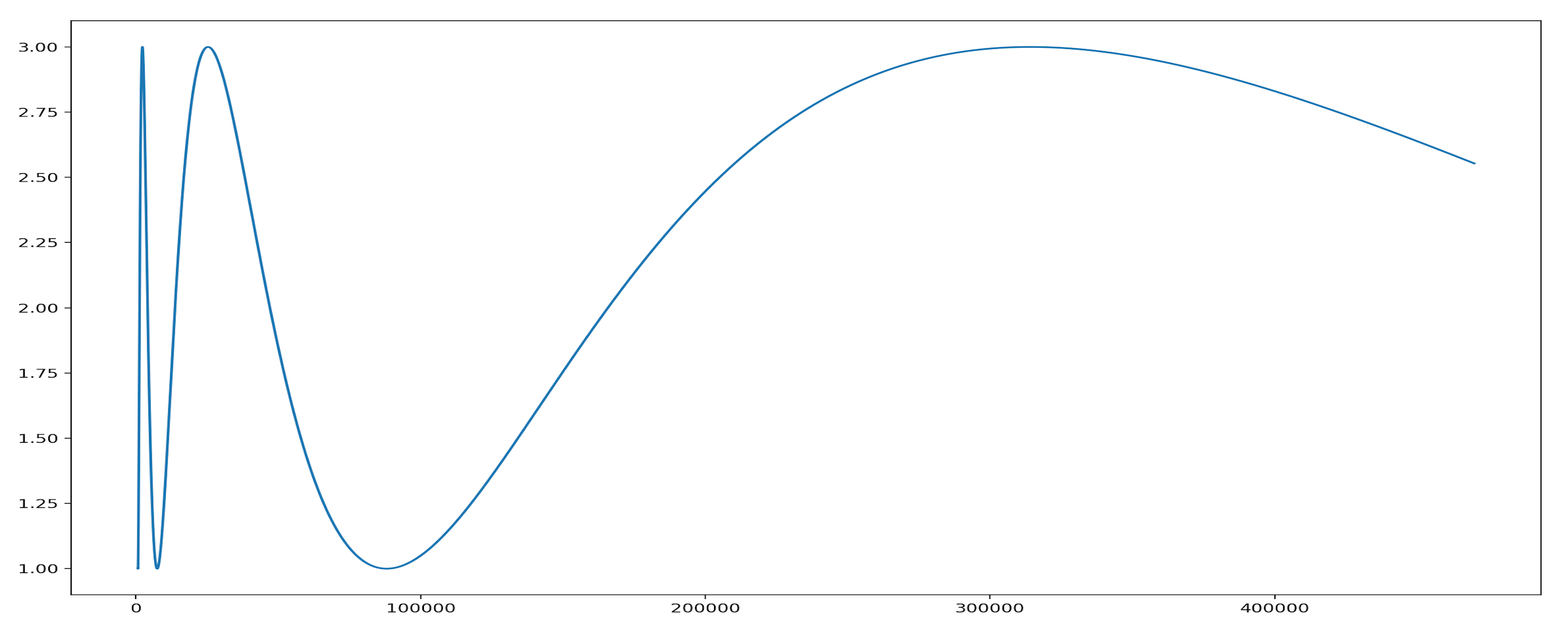}

\caption
{Example of coefficient $a$ with slow oscillations at infinity in dimension $d=1$.}
\label{figf2ap}
\end{figure}

\subsection{Counter-example for $d=1$, $p>1$}

To start with, we study a case where $a \in \mathbf{A}^p$ for $d=1$ and $p>1$. We define
$a(x) =  2 + \sin(\ln(1+|x|))$,
for every $x \in \mathbb{R}$. It is clear that this coefficient satisfies assumptions \eqref{hypothèses1}, \eqref{hypothèses2} and \eqref{hypothèses22}. We claim that $\delta a \in L^p(\mathbb{R})$ for every $p>1$. There indeed exists a constant $C$ such that for every $x\in \mathbb{R}$ with $|x|>1$,
$$|a'(x)| = \left|\dfrac{\cos(\ln(1+|x|))}{1+|x|}\right| \leq \dfrac{C}{|x|}.$$
The mean value theorem then shows that, for $|x|>1$, 
$$|\delta a(x)| = \left|\sin(\ln(1+|x|)) - \sin(\ln(1+|x+1|))\right| \leq \frac{C}{|x|},$$
from which, we deduce, as announced above, that $\delta a \in L^p(\mathbb{R})$ for every $p>1$.

We then consider the homogenization problem \eqref{equationepsilon_new} for
$\Omega = ]1,2[$, that is 
\begin{equation}
\label{eqhomog1D}
\left\{
\begin{array}{cc}
    \dfrac{d}{dx} \left(  a(x/\varepsilon) \dfrac{d}{dx}u^{\varepsilon} \right) = f & \text{on } ]1,2[, \\
    u^{\varepsilon}(1) = u^{\varepsilon}(2) =0. & 
\end{array}
\right.
\end{equation}
Our aim is to establish the existence of two sub-sequences $\left(\varepsilon_n^1\right)_{n \in \mathbb{N}}$ and $\left(\varepsilon_n^2\right)_{n \in \mathbb{N}}$ such that $u^{\varepsilon_n^1} \rightarrow u^{*,1}$ and $u^{\varepsilon_n^2} \rightarrow u^{*,2}$ in $L^2(\Omega)$ when $\varepsilon^1_n, \varepsilon^2_n \to 0$ and such that $u^{*,1} \neq u^{*,2}$. To this end, for $n \in \mathbb{N}$, we define 
$\varepsilon^1_n = \exp(-2 n \pi)$ and  $\varepsilon^2_n = \exp(-(2 n +1) \pi)$.
For $x \in ]1,2[$, we have
\begin{align*}
    a \left( \dfrac{x}{\varepsilon^1_n}  \right)  = 2 + \sin\left( \ln \left( 1 + \dfrac{x}{\varepsilon^1_n} \right) \right) 
    & = 2 + \sin\left( 2 \pi n +  \ln \left( \varepsilon^1_n + x \right) \right) \\
    & = 2 + \sin\left(  \ln \left( \varepsilon^1_n + x \right) \right). 
\end{align*}
Therefore, since $\varepsilon^1_n$ converges to 0 when $n \to \infty$, $a(x/\varepsilon^1_n) = 2 + \sin\left(  \ln \left( \varepsilon^1_n + x \right) \right)$ converges uniformly to $a^{*,1}(x) = 2 + \sin(\ln(x))$ on $]1,2[$. Since $a$ satisfies \eqref{hypothèses1} and \eqref{hypothèses2}, $u^{\varepsilon_n^1}$ is bounded in $H^1(\Omega)$ and, up to an extraction, it weakly converges to a function $u^{*,1}$ in $H^1(\Omega)$. Thus, for every $\varphi \in \mathcal{D}(\Omega)$, we have 
$$ \lim_{n \to \infty} \int_{\Omega}a(x/\varepsilon^1_n) \dfrac{du^{\varepsilon^{1}_n}}{dx} (x) \dfrac{d\varphi}{dx} (x)
dx = \int_{\Omega}a^{1,*}(x) \dfrac{du^{*,1}}{dx}(x) \dfrac{d\varphi}{dx} (x)
dx.$$
We obtain that $u^{1,*}$ is solution in $H^1_0(\Omega)$ to 
\begin{equation}
\label{eqhomog1D1}
\begin{array}{cc}
    -\dfrac{d}{dx} \left(  a^{*,1} \dfrac{d}{dx}u^{*,1} \right) = f & \text{on } ]1,2[.
\end{array}
\end{equation}
We may similarly show that $a(x/\varepsilon^2_n) = 2 - \sin\left(  \ln \left( \varepsilon^2_n + x \right) \right)$ converges uniformly to $a^{*,2}(x) = 2 - \sin(\ln(x))$ on $]1,2[$ and that $u^{\varepsilon^2_n}$ weakly converges in $H^1(\Omega)$ (up to an extraction) to $u^{2,*}$, solution in $H^1_0(\Omega)$ to 
\begin{equation}
\label{eqhomog1D2}
\begin{array}{cc}
    -\dfrac{d}{dx} \left(  a^{*,2} \dfrac{d}{dx}u^{*,2} \right) = f & \text{on } ]1,2[. 
\end{array}
\end{equation}

To conclude, we show that $u^{*,1} \neq u^{*,2}$. Indeed, if we assume that $u^{*,1}=u^{*,2} = u^*$ we have, 
\begin{equation}
\label{eqhomog1D0}
\begin{array}{cc}
    \dfrac{d}{dx} \left(  (a^{*,1}-a^{*,2}) \dfrac{d}{dx}u^{*} \right) = 0 & \text{on } ]1,2[.
\end{array}
\end{equation}
We use $u^*$ as a test function and, since $a^{*,1}-a^{*,2} = 2 \sin(\ln(.))$, we obtain : 
$$2\int_{\Omega} \sin(\ln(x)) \left| \dfrac{d}{dx} u^*\right|^2 =0.$$
We remark that for every $x \in]1,2[$, $\sin(\ln(x)) >0$ and obtain that $\dfrac{d}{dx} u^* = 0$ on $]1,2[$. Since $u^* \in H^1_0(\Omega)$, it follows that $u^* = 0$ and we reach a contradiction as soon as $f \neq 0$. 

We conclude with the following three remarks : 

\textbf{1.} For every $y \in [0,2\pi]$, we could also consider the sub-sequence $\varepsilon_n = \exp(- 2n \pi - y)$ and, as above, we could obtain that $u^{\varepsilon_n}$ converges to $u^*$ solution to $\displaystyle -\dfrac{d}{dx} \left(  a^{*} \dfrac{d}{dx}u^{*} \right) = f$ on $]1,2[$,
where $a^* = 2 +\sin(y + \ln(x))$. Therefore $u^*$ actually has an infinite number of adherent values. 

\textbf{2.} Unlike the periodic case, that is when $a=a_{per}$ is periodic, the coefficients $a^*$ of the homogenized equation that we obtain here (which depends on a chosen extraction) are not constant.  

\textbf{3.} For the specific coefficient $a$ chosen, a property similar to that of Proposition \ref{decomposition_fonctions_A_p} cannot hold. Actually, if $a$ were on the form $a = a_{per} + \Tilde{a}$ with $a_{per}$ periodic and $\Tilde{a}$ a function vanishing at infinity, we would be able to homogenize problem \eqref{eqhomog1D}. Indeed, some explicit calculations give 
$$u^{\varepsilon}(x) = - \int_{1}^x \dfrac{1}{a(y/\varepsilon)}F(y)dy + C^{\varepsilon} \int_{1}^x \dfrac{1}{a(y/\varepsilon)} dy,$$
where $\displaystyle F(x) = \int_{1}^x f(x) dx$ and  $\displaystyle C^{\varepsilon} = \left(\int_{1}^2 \dfrac{1}{a(y/\varepsilon)} dy\right)^{-1} \int_{1}^2 \dfrac{1}{a(y/\varepsilon)}F(y)dy$.

Since $\displaystyle \lim_{|x| \to \infty}|\Tilde{a}(x)| = 0$, we can show that $|\Tilde{a}(./\varepsilon)| \stackrel{\varepsilon \to 0}{\longrightarrow} 0$ in $L^{\infty}(\Omega)-*$. 
If we remark that $\dfrac{1}{a} = \dfrac{1}{a_{per}} - \dfrac{\Tilde{a}}{a_{per}(\Tilde{a}+a_{per})}$,
it follows that $\displaystyle \frac{1}{a}$ converges to $\langle \displaystyle \frac{1}{a_{per}} \rangle = (a_{per}^*)^{-1}$ for the weak-* topology of $L^{\infty}(\Omega)$ (where $\langle . \rangle$ denotes the average value of a periodic function). Therefore the limit $u^{*}$ of $u^{\varepsilon}$ can be made explicit : 
$$u^*(x) = - (a_{per}^*)^{-1} \int_{1}^x F(y)dy + (x-1)(a_{per}^*)^{-1} \int_{1}^2 F(y) dy,$$ 
which is the unique solution in $H^1_0(\Omega)$ to 
$\displaystyle - a_{per}^* (u^*)'' = f$.
In this case, the convergence of the whole sequence $u^{\varepsilon}$ to $u^*$ would be in contradiction with the results obtained above.

\subsection{Counter-example for $d=2$, $p=2$}

We next study the case where $a \in \mathbf{A}^p$ for $p=d$, more specifically when $p=d=2$. We define $a(x) =  2 + \sin(\ln(1+\ln(1+|x|))$,
for every $x \in \mathbb{R}^d$, which satisfies assumptions \eqref{hypothèses1}, \eqref{hypothèses2} and \eqref{hypothèses22}. We remark there exists a constant $C$ such that for every $x \in \mathbb{R}^2$ with $|x|>1$,
$|\nabla a(x)|  \leq \dfrac{C}{\ln(|x|)|x|}$ and the mean value theorem implies
$|\delta a(x)| \leq \dfrac{C}{\ln(|x|)|x|}$ which provides that $\delta a \in L^2(\mathbb{R}^2)^2$.

We then consider the homogenization problem \eqref{equationepsilon_new} on the annulus
$\Omega = \left\{x \in \mathbb{R}^d \ \middle| \ 1<|x|<2\right\} $. We again intend to establish the existence of two sub-sequences $\left(\varepsilon_n^1\right)_{n \in \mathbb{N}}$ and $\left(\varepsilon_n^2\right)_{n \in \mathbb{N}}$ such that $u^{\varepsilon_n^1} \rightarrow u^{*,1}$ and $u^{\varepsilon_n^2} \rightarrow u^{*,2}$ in $L^2(\Omega)$ when $\varepsilon^1_n, \varepsilon^2_n \to 0$ and such that $u^{*,1} \neq u^{*,2}$ thereby proving that homogenization does not hold in this setting. For $n \in \mathbb{N}$, we define $\varepsilon^1_n = \exp\left(-\left(\exp(2 n \pi\right)\right)$ and $\varepsilon^2_n = \exp\left(-\exp\left(\left((4 n +1\right) \dfrac{\pi}{2}\right)\right)$. We have for every $x\in \Omega$ : 
\begin{align*}
    a \left( \dfrac{x}{\varepsilon^1_n}  \right) & = 2 + \sin\left( \ln\left(1+\ln \left( 1 + \dfrac{x}{\varepsilon^1_n} \right) \right)\right)
    = 2 + \sin\left(  \ln \left( 1 + \exp(-2 n \pi)\left(1 + \ln \left( \varepsilon^1_n + x \right)\right) \right) \right), 
\end{align*}
and we can deduce that $a \left( \dfrac{.}{\varepsilon^1_n}  \right)$ uniformly converges to $a^{*,1}\equiv 2$ on $\Omega$. Since $a$ satisfies \eqref{hypothèses1} and \eqref{hypothèses2}, $u^{\varepsilon_n^1}$ is bounded in $H^1(\Omega)$ and, up to an extraction, it weakly converges to a function $u^{*,1}$ in $H^1(\Omega)$. Thus, for every $\varphi \in \mathcal{D}(\Omega)$, we have $\displaystyle  \lim_{n \to \infty} \int_{\Omega}a(x/\varepsilon^1_n) \nabla u^{\varepsilon^{1}_n} (x) \nabla \varphi (x)
dx = \int_{\Omega}2 \nabla u^{*,1}(x) \nabla \varphi (x)
dx$.
We obtain that $u^{1,*}$ is solution in $H^1_0(\Omega)$ to 
\begin{equation}
\label{eqhomog2D1}
\left\{
\begin{array}{cc}
   - 2 \Delta u^{*,1} = f & \text{on } \Omega, \\
    u^{*,1} =0 &  \text{on } \partial \Omega.
\end{array}
\right.
\end{equation}
We similarly have $a \left( \dfrac{x}{\varepsilon^2_n}  \right)  = 2 + \cos\left(  \ln \left( 1 + \exp\left((-(4n +1) \dfrac{\pi}{2}\right)\left(1 + \ln \left( \varepsilon^2_n + x \right)\right) \right) \right) $
and $a \left( \dfrac{.}{\varepsilon^2_n}  \right) $ converges uniformly to $a^{*,2} \equiv 3$ on $\Omega$. Therefore, $u^{\varepsilon^2_n}$ weakly converges in $H^1(\Omega)$ (up to an extraction) to $u^{2,*}$, solution in $H^1_0(\Omega)$ to 
\begin{equation}
\label{eqhomog2D2}
\left\{
\begin{array}{cc}
    -3 \Delta u^{*,2} = f & \text{on } \Omega, \\
    u^{*,2} = 0 & \text{on } \partial \Omega.
\end{array}
\right.
\end{equation}
Clearly $u^{*,1} \neq u^{*,2}$ as soon as $f\neq 0$ and we can conclude that $u^{\varepsilon}$ does not converge in $L^2(\Omega)$. 


\end{document}